\numberwithin{equation}{section}
\newtheorem{prop}{Proposition}
\newtheorem{lemma}[prop]{Lemma}
\newtheorem{thm}[prop]{Theorem}
\newtheorem{cor}[prop]{Corollary}
\newtheorem{question}[prop]{Question}
\numberwithin{prop}{section}
\theoremstyle{definition}
\newtheorem{defn}[prop]{Definition}
\newtheorem{ex}[prop]{Example}
\newtheorem{rmk}[prop]{Remark}
\newtheorem{construction}[prop]{Construction}
\newcommand{\del}{\partial}
\newcommand{\dt}{\frac{\partial}{\partial t}}
\newcommand{\brs}[1]{\left| #1 \right|}
\newcommand{\gs}{\sigma}
\newcommand{\ga}{\alpha}
\newcommand{\gb}{\beta}
\newcommand{\N}{\nabla}
\newcommand{\LL}{\mathcal L}
\newcommand{\til}[1]{\widetilde{#1}}
\renewcommand{\bar}[1]{\overline{#1}}
\DeclareMathOperator{\Rc}{Rc}
\DeclareMathOperator{\tr}{tr}
\DeclareMathOperator{\Ker}{Ker}
\renewcommand{\Im}{\mathop{\mathrm{Im}}}
\renewcommand{\Re}{\mathop{\mathrm{Re}}}
\newcommand{\la}{\langle}
\newcommand{\ra}{\rangle}
\newcommand{\C}{\mathbb C}
\newcommand{\R}{\mathbb R}
\newcommand{\Z}{\mathbb Z}
\newcommand{\mc}{\mathcal}
\newcommand{\mf}{\mathfrak}
\newcommand{\fp}{\varphi}
\begin{document}

\author{Vestislav Apostolov}
\address{Vestislav Apostolov\\ D\'epartment de mathe\'ematiques\\
	Universit\'e du Qu\'ebec \`a Montr\'eal\\
	Case postale 8888, succursale centre-ville
	Montr\'eal (Qu\'ebec) H3C 3P8 \\ and Laboratoire Jean Leray   \\ Universit\'e de Nantes\\
2, Rue de la Houssinière - BP 92208
F-44322 Nantes\\ France }
\email{\href{mailto:apostolov.vestislav@uqam.ca}{apostolov.vestislav@uqam.ca}}

\author{Jeffrey Streets}
\address{Jeffrey Streets\\Rowland Hall\\
	University of California\\
	Irvine, CA 92617}
\email{\href{mailto:jstreets@uci.edu}{jstreets@uci.edu}}

\author{Yury Ustinovskiy}
\address{Yury Ustinovskiy\\Lehigh University\\
	Chandler-Ullmann Hall\\
	17 Memorial Drive East\\
	Bethlehem, PA, 18015} 
\email{\href{mailto:yuu221@lehigh.edu}{yuu221@lehigh.edu}}

\title{Variational structure and uniqueness of generalized K\"ahler-Ricci solitons}

\begin{abstract} Under broad hypotheses we derive a scalar reduction of the generalized K\"ahler-Ricci soliton system.  We realize solutions as critical points of a functional analogous to the classical Aubin energy defined on the orbit of a natural Hamiltonian action of diffeomorphisms, thought of as a generalized K\"ahler class.  This functional is convex on a large set of paths in this space, and using this we show rigidity of solitons in their generalized K\"ahler class.  As an application we prove uniqueness of the generalized K\"ahler-Ricci solitons on Hopf surfaces constructed in \cite{st-us-19}, finishing the classification in complex dimension $2$.
\end{abstract}

\date{\today}

\maketitle

\section{Introduction}

Given a smooth manifold $M$, a Riemannian metric $g$, closed three-form $H$, and smooth function $f$ on $M$ define a \emph{generalized Ricci soliton} if
\begin{align*}
	\Rc-\frac{1}{4}H^2+\nabla^2f=&\ \lambda g,\\
	d^* H+i_{\nabla f}H=&\ 0,
\end{align*}
where $\lambda \in \R$ is a constant.  The generalized Ricci soliton is called \emph{shrinking, steady} or \emph{expanding} according to $\lambda>0$, $\lambda=0$ or $\lambda<0$. These structures are natural generalizations of Einstein metrics, and arise from considerations in complex geometry \cite{Ivanovstring,PCFReg}, Weyl geometry \cite{GauduchonIvanov}, and mathematical physics \cite{Friedanetal}.  Also, generalized Ricci solitons are critical points of functionals generalizing Perelman's entropies~\cite{OSW, Streetsexpent}. 

In this work we will focus on solutions which are also \emph{generalized K\"ahler}, and are thus called \emph{generalized K\"ahler-Ricci solitons} (GKRS).  A generalized K\"ahler manifold (\cite{GHR, gu-14}) is defined by $(M, g, I, J)$ where $I$ and $J$ are integrable complex structures, $g$ is compatible with both $I$ and $J$, and furthermore 
	\[
	d^c_I\omega_I=H = -d^c_J\omega_J, \qquad dH=0,
	\]
where $\omega_I=gI$ and $\omega_J=gJ$.   As a special case, we can take $(M, g, I)$ to be K\"ahler, $J=\pm I$, and $H=0$,  so that GKRS reduce in this case  to \emph{gradient K\"ahler-Ricci solitons} (KRS). Here a fundamental point is that a compact steady or expanding K\"ahler-Ricci soliton automatically has $f$ constant and so $g$ is K\"ahler-Einstein with zero or negative scalar curvature.  In this case, Calabi showed \cite{Calabi} using the maximum principle these metrics are unique in a fixed K\"ahler class. In the compact, non-Einstein case, shrinking KRS  have been extensively studied in recent years~\cite{TZ1, TZ2, BN, HL} and a general uniqueness result  up to the action of the group of automorphisms is established in~\cite{TZ2,BN}.

A fundamental example of non-K\"ahler generalized K\"ahler structure  \cite[Example\,1.23]{gu-14}  is given by a compact even dimensional semisimple Lie group  with bi-invariant  Riemannian metric $g$, and $I = J_L$, $J = J_R$ compatible left-invariant and right-invariant complex structures respectively.  The associated Bismut connections are flat, and the triples $(g, I, J)$ define steady GKRS with $f$ constant.  In complex dimension $2$, this construction yields the Hopf/Boothby metric on \emph{standard} Hopf surfaces.  In \cite{st-us-19}, the second and third authors constructed steady GKRS on all \emph{diagonal} Hopf surfaces (see Definition \ref{def:hopf}), which have $f$ non-constant aside from the standard case.

The goal of this paper is to address general uniqueness phenomena for GKRS, and in particular to finish the classification of GKRS in complex dimension $2$. To this end, we recall that the second author has shown in \cite{st-19-soliton} that  a GKRS on a  compact complex surface is either a KRS, or else must be a \emph{steady} GKRS defined on a Hopf surface. We will thus focus  in this paper on the uniqueness properties in steady case.  Whereas K\"ahler Calabi-Yau metrics are unique in a fixed K\"ahler class, it turns out that the non-K\"ahler situation has some reminiscences with the basic structural theory  of compact shrinking KRS on Fano manifolds. Indeed,  the group of complex automorphisms of $(M, I)$ is not trivial in general, and the theory of shrinking KRS will guide the discussion to follow.

To begin,  it is shown in \cite{st-us-20-GH} that every (steady) GKRS comes equipped with vector fields
\begin{align*}
X_I = \frac{1}{2} I \left( \theta_I^{\sharp} - \N f \right), \qquad X_J = \frac{1}{2} J \left( \theta_J^{\sharp} - \N f \right)
\end{align*}
such that $X_I$ is $I$-holomorphic and $X_J$ is $J$-holomorphic (cf. Proposition \ref{prop:soliton_equiv}).  Here $\theta_I$ is the Lee form of the Hermitian structure $(g, I)$, likewise $\theta_J$, and {$\sharp$ is the isomorphism $g^{-1} : T^*M \cong TM$.  Furthermore the GKRS system is expressed as
\begin{align*}
(\rho_I^B)_I^{1,1} + \LL_{I X_I} \omega_I = 0,
\end{align*}
where $\rho_I^B$ is the Bismut-Ricci form of the Hermitian structure $(g, I)$. Our first main goal is to derive a scalar reduction of the GKRS system under some broad structural hypotheses on the underlying GK structure.   Given a GK structure $(M, g, I, J)$, on the loci where $I \pm J$ are invertible there are associated symplectic forms
\begin{align*}
F_{\pm} = -2 g(I \pm J)^{-1}.
\end{align*}
If one of these is defined everywhere on $M$ the GK structure is called \emph{symplectic type}.  We show in Proposition \ref{pr:symp_soliton} below that a GKRS of symplectic type is automatically K\"ahler, Calabi-Yau.  Thus outside of the K\"ahler setting neither $F_{\pm}$ can be globally defined, but we will assume instead that \emph{both} are defined on open dense proper subsets $U_{\pm}$, and such GK structures are called \emph{log-nondegenerate}.  This is the case for any GK structure on a compact 4-manifold with odd first Betti number, such that $I$ and $J$ induce the same orientation (known as GK structures of \emph{even type} ~\cite{ap-ga-gr-99}), and in particular for the GKRS on Hopf surfaces constructed in \cite{st-us-19}, where $U_{\pm}$ are each the complement of disjoint elliptic curves.  In the log-nondegenerate setting we show that $X_I,X_J$ commute, preserve $I,J$ and $g$, and furthermore that the two vector fields $X_I \pm X_J$ are $F_{\pm}$-Hamiltonian with normalized Hamiltonian potentials $\psi_{\pm}$ (see Proposition~\ref{pr:XI_XJ_properties}). We show that the gradient steady GKRS equations imply the (in fact globally defined) scalar equation
\begin{align} \label{f:scalarGKRS}
e^{\psi_+} F_+^n = e^{\psi_-} F_-^n.
\end{align}
Conversely, a log-nondegenerate GK manifold together with vector fields $X_I$, $X_J$ satisfying the symmetries above and such that (\ref{f:scalarGKRS}) holds, is a GKRS (see Proposition \ref{prop:gk_soliton_suff} for the precise statement).

The scalar reduction (\ref{f:scalarGKRS}) is useful in understanding uniqueness of GKRS in a \emph{generalized K\"ahler class}, a notion we now recall.  First, every generalized K\"ahler manifold comes equipped with a real Poisson tensor \cite{ap-ga-gr-99, hi-06}
\begin{align*}
\gs = \frac{1}{2} [I, J] g^{-1}.
\end{align*}
Given a one-parameter family of smooth functions $\fp$ on $M$, we let $X_{\fp} = - \gs(d \fp)$ be the associated family of $\gs$-Hamiltonian vector fields, with associated family of diffeomorphisms $\Phi_t$.  In the log-nondegenerate setting, we can describe a GK structure equivalently in terms of the data $(I, J, \gs)$, and then $\Phi_t$ acts on GK structures of this type to give the one-parameter family $(\Phi_t^* I, J, \gs)$ for $t$ small enough.  This is an example of the `flow construction' in GK geometry \cite{ap-ga-gr-99, bi-gu-za-18, gu-10}.  The orbit $\mc M$ of the flow construction is  a formal Fr\'echet manifold which is modelled on 
$C^{\infty}(M,\R)/\R$~--- the space of smooth functions modulo additive real constants~--- and is a direct generalization of the idea of K\"ahler class in K\"ahler geometry. We refer to the orbits $\mc M$ henceforth as \emph{generalized K\"ahler classes}.  

Following ideas in \cite{ap-st-17}, we define a functional on the $G$-\emph{invariant} generalized K\"ahler class of a log-nondegenerate GK structure, where $G$ is the compact torus in the isometry group of $(M,g)$  generated  by the flows of vector fields $X_I$, $X_J$ satisfying the structural conditions above. More precisely,  we define a closed $1$-form on the associated $G$-invariant generalized K\"ahler class $\mc M^G$ via
	\[
	\pmb\nu_{X_I, X_J}(\fp):=\int_M \fp \left(
		e^{\psi_+}\frac{F_+^n}{n!}-e^{\psi_-}\frac{F_-^n}{n!}
	\right).
	\]
	The above $1$-form is somewhat reminiscent to the definition of the $\pmb{J}$-functional in K\"ahler geometry, and its weighted versions considered in~\cite{TZ1}, which are introduced through the $1$-form  
	\[\pmb{\mu}_X(\fp) :=\int_M \fp \left(e^{\psi_{\omega_0}} \frac{\omega_0^n}{n!}- e^{\psi_{\omega}} \frac{\omega^{n}}{n!}\right), \]
	where $\psi_{\omega}$ is a suitably normalized $\omega$-Hamiltonian potential of a given Killing vector field $X$,  defined on the space of $X$-invariant K\"ahler metrics $(g, J, \omega)$ in a given  K\"ahler class. We however stress here that  $\pmb\nu_{X_I, X_J}$ above depends essentially on the log-nondegenerate assumption of the holomorphic Poisson tensor, and thus has no precise analogue in the theory of compact KRS.
	
Integration of $\pmb \nu$ along paths yields a well-defined functional $\pmb J_{X_I, X_J}$ on the universal cover of the generalized K\"ahler class.  Remarkably, this functional is convex along families of Hamiltonian diffemorphisms generated by time-independent potentials, and this implies a rigidity result for steady GKRS in all dimensions (cf. Theorem~\ref{thm:rigid_soltions} below):

\begin{thm}[Rigidity of solitons]\label{thm:rigidite_soltions}
	Let $(M,g_t,I_t,J)$, $t \in [0, \tau)$ be a one-parameter family of compact log-nondegenerate GKRS lying in $\mc M^G$, where $G$ is the torus generated by the vector fields $X_I, X_J$ associated to $(g_0, I_0, J)$.  Then $(M,g_t,I_t,J)=(M,g_0,I_0,J)$ for all $t$.
\end{thm}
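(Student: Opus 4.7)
The plan is to realize the GKRS equation as $\pmb\nu_{X_I, X_J} = 0$ at each point of the family, and then exploit convexity of the primitive $\pmb J_{X_I, X_J}$ along time-independent potential paths to force the tangent vectors of the family into the null space of its Hessian, which should consist precisely of infinitesimal GK symmetries.

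First, I parametrize the smooth curve $\gamma_t := (M, g_t, I_t, J)$ in $\mc M^G$ by a time-dependent $G$-invariant potential $\fp_t \in C^\infty(M)^G/\R$, with $\dot\gamma_t$ represented by $\dot\fp_t$. Since each $\gamma_t$ is a GKRS, the scalar reduction \eqref{f:scalarGKRS} yields $e^{\psi_+}F_+^n - e^{\psi_-}F_-^n \equiv 0$ at $\gamma_t$, so $\pmb\nu_{X_I, X_J}|_{\gamma_t} = 0$ and $\tfrac{d}{dt}\pmb J_{X_I, X_J}(\gamma_t) = 0$. In particular, $\pmb J_{X_I, X_J}$ is constant along the family (working on the universal cover of $\mc M^G$ where $\pmb J_{X_I, X_J}$ is defined).

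Fix $t_0 \in [0,\tau)$ and compare $\gamma_t$ to the auxiliary path $\tilde\gamma_s$ in $\mc M^G$ obtained by flowing $\gamma_{t_0}$ via the Hamiltonian diffeomorphism generated by the time-independent potential $\dot\fp_{t_0}$; this stays in $\mc M^G$ since $\dot\fp_{t_0}$ is $G$-invariant. Along $\tilde\gamma_s$ the convexity result gives $s\mapsto \pmb J_{X_I, X_J}(\tilde\gamma_s)$ convex, with vanishing first derivative at $s=0$ (since $\gamma_{t_0}$ is critical), so its second derivative at $s=0$ is nonnegative. On the other hand, along $\gamma_t$ the second derivative at $t_0$ vanishes by the previous paragraph. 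Because both curves pass through the critical point $\gamma_{t_0}$ with the same tangent vector $\dot\fp_{t_0}$, their second derivatives both equal $\Hess \pmb J_{X_I, X_J}|_{\gamma_{t_0}}(\dot\fp_{t_0}, \dot\fp_{t_0})$, which must therefore vanish.

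Finally, I invoke strict convexity of $\pmb J_{X_I, X_J}$ along time-independent potential paths modulo the null space consisting of potentials generating infinitesimal GK symmetries. Vanishing of the Hessian in direction $\dot\fp_{t_0}$ then forces $X_{\dot\fp_{t_0}}$ to preserve the GK structure $(g_{t_0}, I_{t_0}, J)$. Under the flow construction this means the generated family of GK structures $(\Phi_s^* I_{t_0}, J, \gs)$ is constant in $s$, so $\dot\gamma_{t_0} = 0$ as a tangent vector to $\mc M^G$. Applying this at each $t_0 \in [0,\tau)$ yields $\gamma_t \equiv \gamma_0$.

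The main obstacle is sharpening the convexity statement to strict convexity modulo the subspace of symmetry-generating potentials, and matching this subspace with the actual gauge directions in the tangent space of $\mc M^G$. This requires a careful second-variation computation along a path generated by a time-independent potential, which differs from the K\"ahler analogue (for instance the weighted Mabuchi functional of \cite{TZ1}) by the appearance of the two weighted measures $e^{\psi_\pm}F_\pm^n$ living on the complementary open dense subsets $U_\pm$; integration by parts against $\dot\fp_{t_0}$ should reduce the vanishing of the Hessian to a holomorphicity condition on $X_{\dot\fp_{t_0}}$ with respect to both $I_{t_0}$ and $J$, yielding the desired GK symmetry.
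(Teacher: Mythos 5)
Your proposal is correct and follows essentially the same route as the paper: parametrize the family by flow potentials, note that criticality of each $\pmb J_{X_I,X_J}(\gamma_t)$ forces the second derivative along the path to vanish, and then use the second-variation formula to conclude $d\fp_t\equiv 0$. The ``main obstacle'' you flag is in fact already resolved by the convexity lemma you invoke (Lemma~\ref{lm:2nd_variation}): at a soliton the first term of \eqref{eq:2nd_variation} vanishes because $e^{\psi_+}F_+^n=e^{\psi_-}F_-^n=n!\,e^{-f}dV_g$, and since $\brs{(I+J)d\fp}^2+\brs{(I-J)d\fp}^2=4\brs{d\fp}^2$ the remaining quadratic terms sum to $\int_M\brs{d\fp}^2e^{-f}dV_g$, so the null space of the Hessian is exactly the constants~--- no separate identification with symmetry-generating potentials (nor the auxiliary constant-speed path) is needed.
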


Our main application of this new theory is to finish the classification of compact generalized K\"ahler-Ricci solitons on complex surfaces initiated in \cite{st-19-soliton, st-us-19}. One of the main results in~\cite{st-us-19} states the existence of a GKRS on a diagonal Hopf surface $(M, I_{\ga\gb})$ (cf. Theorem \ref{thm:hopf_soliton_existence}).  By construction, these solitons are invariant under the maximal compact subgroup $K \subset \mathrm{Aut}(M,I_{\alpha\beta}, \pmb{\pi})$, where $\pmb{\pi}$ is the standard holomorphic Poisson tensor (see (\ref{eq:hopf_poisson})), and hence define solitons on certain secondary Hopf surfaces, which are finite quotients of primary Hopf surfaces.  Thus, to complete the classification we must determine precisely which secondary Hopf surfaces admit solutions, and furthermore to establish their uniqueness, up to {complex automorphisms}.  By an explicit analysis of the automorphism groups of Hopf surfaces, we show in Theorem \ref{thm:hopf_existence_even} that a secondary Hopf surface admits an even-type GK structure if and only if it is a quotient of a diagonal Hopf surface by a certain \emph{cyclic} subgroup $\Gamma_{k,\ell}$ of the automorphism group.  As these cyclic deck groups are contained in the {maximal compact subgroup $K\subset \mathrm{Aut}(M,I_{\alpha\beta}, \pmb{\pi})$ defined above}, the solitons of Theorem \ref{thm:hopf_soliton_existence} descend to the quotients, settling the existence question. To establish the uniqueness, the key issue is to establish maximal $K$-symmetry of a given GKRS, and this is achieved in Section~\ref{sec:hopf} via the rigidity of Theorem \ref{thm:rigidite_soltions}.  The final classification statement {reads as} follows:

\begin{cor} \label{c:classification1}
	Let $(M^4,g,I,J)$ be a compact gradient steady GKRS. Then precisely one of the following holds
	\begin{enumerate}
		\item $(M^4,I)$ is K\"ahler, $g$ is a Calabi-Yau metric, i.e., $\mathrm{Rc}=0$, and $I=\pm J$.
		\item $(M^4,g,I,J)$ is an odd-type GK structure, with $(M^4,I)$ biholomorphic to a (possibly trivial) quotient of a diagonal Hopf surface $(\til M,I_{\alpha\beta})$ by $\Gamma_{k,\ell}\simeq \Z_{\ell}$.
		Up to the action of $\mathrm{Aut}(M,I)$ and scaling, the metric $g$ and the complex structure $J$ are given by $g^s$ and $\pm J_{\mathrm{odd}}^s$ of Theorem~\ref{thm:hopf_soliton_existence}.
		\item $(M^4,g,I,J)$ is an even-type GK structure, with $(M^4,I)$ biholomorphic to a (possibly trivial) quotient of a diagonal Hopf surface $(\til M,I_{\alpha\beta})$ by $\Gamma_{k,\ell}\simeq \Z_{\ell}$.
		Up to the action of $\mathrm{Aut}(M,I)$ and scaling, the metric $g$ and the complex structure $J$ are given by $g^s$ and $\pm J_{\mathrm{even}}^s$ of Theorem~\ref{thm:hopf_soliton_existence}.
	\end{enumerate}
\end{cor}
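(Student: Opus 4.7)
The plan is to split the classification into the K\"ahler and non-K\"ahler cases and then, in the non-K\"ahler setting, to combine the surface-level reduction of \cite{st-19-soliton} with a careful analysis of automorphism groups of Hopf surfaces and the rigidity theorem above. First, I observe that if $I = \pm J$ then $H = \pm d^c_I \omega_I \mp d^c_I \omega_I = 0$, the GK equations degenerate to the classical K\"ahler-Ricci soliton system, and compactness together with steadiness force $f$ constant, so $\mathrm{Rc}=0$ and $g$ is Calabi-Yau; this is case (1). From now on assume $(M^4,g,I,J)$ is non-K\"ahler, i.e.\ $I \neq \pm J$.

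Next, I invoke the classification result of \cite{st-19-soliton}, which tells us that $(M^4,g,I,J)$ is a steady GKRS and $(M^4,I)$ is biholomorphic to a (possibly secondary) Hopf surface. By Proposition~\ref{pr:symp_soliton}, in the non-K\"ahler case the GK structure cannot be of symplectic type; combined with the fact that on a compact complex surface with odd first Betti number every GK structure with $I,J$ inducing the same orientation is even-type and log-nondegenerate, while GK structures of opposite orientation are of odd type, this splits our analysis into the two subcases (2) and (3). In either case $(M^4,g,I,J)$ is log-nondegenerate, so the vector fields $X_I,X_J$ of Proposition~\ref{pr:XI_XJ_properties} and the torus $G \subset \mathrm{Isom}(M,g)$ they generate are all available. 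The first task is then to determine precisely which Hopf surfaces carry such a structure: an automorphism-group calculation (Theorem~\ref{thm:hopf_existence_even} in the even case, together with its odd-type counterpart) shows that only quotients of diagonal Hopf surfaces $(\widetilde M, I_{\alpha\beta})$ by the cyclic subgroups $\Gamma_{k,\ell}$ arise, and these lie inside the maximal compact torus $K \subset \mathrm{Aut}(M,I_{\alpha\beta},\pmb\pi)$. Consequently the solitons of Theorem~\ref{thm:hopf_soliton_existence} descend to these quotients and give the existence half of cases (2) and (3).

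For uniqueness, after pulling the given soliton back to the universal cover (a diagonal Hopf surface), I want to show that up to the action of $\mathrm{Aut}(M,I)$ it is $K$-invariant, and that within its $K$-invariant generalized K\"ahler class it agrees with the model $(g^s, J^s_{\mathrm{even/odd}})$. The $K$-invariance is obtained by first conjugating the maximal torus of the isometry group of $g$ into $K$ via an element of $\mathrm{Aut}(M,I)$ (using a Matsushima-type argument for the reductive group $\mathrm{Aut}(M,I,\pmb\pi)$), so that after this conjugation $X_I, X_J$ generate a subtorus of $K$, and then using the scalar reduction~\eqref{f:scalarGKRS} to enlarge the symmetry group to all of $K$. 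Once the soliton is $K$-invariant, both it and the model soliton of Theorem~\ref{thm:hopf_soliton_existence} lie in the same $G$-invariant generalized K\"ahler class $\mc M^G$ (since $\mc M^G$ is connected and the two complex structures $I$ agree after the $\mathrm{Aut}$-adjustment while $J$ is fixed up to sign). I then connect the two solitons by a smooth path of $G$-invariant generalized K\"ahler structures inside $\mc M^G$ and apply Theorem~\ref{thm:rigidite_soltions} to conclude that they coincide, finishing cases (2) and (3).

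The main obstacle is the passage from an arbitrary GKRS on the Hopf surface to a $K$-invariant one: the rigidity theorem only acts within a single generalized K\"ahler class, so one needs to combine the analysis of $\mathrm{Aut}(M,I_{\alpha\beta},\pmb\pi)$ (to place the isometry torus inside the model maximal torus $K$ via a biholomorphism) with the scalar equation~\eqref{f:scalarGKRS} (to upgrade symmetry from the torus generated by $X_I, X_J$ to all of $K$). The remaining steps---the Calabi-Yau reduction in the K\"ahler case, the identification of the underlying complex surface via \cite{st-19-soliton}, and the explicit determination of admissible deck groups $\Gamma_{k,\ell}$---are essentially bookkeeping once the preceding theorems of the paper are in hand.
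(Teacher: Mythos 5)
Your overall skeleton (K\"ahler dichotomy via \cite{st-19-soliton}, identification of the admissible Hopf surfaces via Theorem~\ref{thm:hopf_existence_even} and \cite{ap-gu-07}, descent of the model solitons through $\Gamma_{k,\ell}\subset K$) matches the paper, but the uniqueness half --- which is the actual content of the corollary --- has two genuine gaps where you substitute a plausible-sounding plan for the arguments that do the work.

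First, the promotion to $K$-invariance. You propose a ``Matsushima-type argument'' to conjugate the isometry torus into $K$, followed by ``using the scalar reduction~(\ref{f:scalarGKRS}) to enlarge the symmetry group to all of $K$.'' Neither step is a proof. The scalar equation $e^{\psi_+}F_+^n=e^{\psi_-}F_-^n$ does not by itself produce an extra Killing field: the soliton vector fields $X_I\pm X_J$ only span a \emph{two}-dimensional subspace of $\mf k$ (they are forced to be multiples of $\Im(z_2\partial_{z_2})$ and $\Im(z_1\partial_{z_1})$ because their Hamiltonian potentials blow down to $-\infty$ along $Z_\pm$ and hence have critical points), and the entire difficulty is manufacturing a \emph{third} independent symmetry. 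The paper does this by exhibiting explicit vector fields $Y_I\in\mf k$ and $Y_J$ (the analogue for the complex structure $J$) with logarithmic Hamiltonian potentials $h_{Y_I}$, $h_{Y_J}$ on $U_+\cap U_-$, proving that $h_{Y_I}-h_{Y_J}$ extends smoothly across $Z_\pm$ (via the comparison $|w_i|^2/|z_i|^2=|\sigma_I/z_i|^2_g/|\sigma_J/w_i|^2_g$), and then invoking Lemma~\ref{lm:vf_vanish} --- which reinterprets the flow of $Y_J$ as a Hamiltonian flow construction and applies Theorem~\ref{thm:rigid_soltions} --- to conclude $Y_I=Y_J$, hence $Y_I$ preserves both $I$ and $J$. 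This is the one place the variational machinery of Section~\ref{sec:aubin} actually enters the classification, and your proposal omits it.

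Second, the final comparison with the model. You assert that once both solitons are $K$-invariant they ``lie in the same $G$-invariant generalized K\"ahler class $\mc M^G$'' because $I$ and $J$ agree, and you then want to connect them by a path and apply Theorem~\ref{thm:rigidite_soltions}. Membership in a common generalized K\"ahler class is an orbit statement for the Hamiltonian action, not a consequence of the complex structures agreeing; on a Hopf surface there is no $dd^c$-lemma available to convert ``same $(I,J)$'' into ``related by the flow construction,'' and even granting that, the rigidity theorem requires a path consisting of (or generated by) the flow construction with the endpoints already known to be connectable by a constant-speed potential. The paper sidesteps this entirely: after establishing $K$-invariance it quotes the uniqueness of $K$-invariant solutions of the soliton system from \cite[Theorem\,1.1]{st-us-19} (an ODE-level statement), then pins down $\sigma$ up to sign using $|\sigma|^2_g=1-p^2$ and Theorem~\ref{thm:hopf_poisson_gk}, and finally recovers $J$ from $(g,I,\sigma)$ via Lemma~\ref{lm:4DGK}. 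You should also note that uniqueness on \emph{secondary} Hopf surfaces needs Lemma~\ref{lm:aut_secondary} to guarantee that the automorphism matching the two lifted solitons on the primary cover normalizes $\Gamma_{k,\ell}$ and hence descends; this is not bookkeeping, since the relevant automorphism must be shown to lie in $(\C^*)^2/\la A\ra$.
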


\begin{rmk} \label{r:flowrmk} As Corollary \ref{c:classification1} has now given a complete description of the existence and uniqueness of GKRS on complex surfaces, it is natural to expect that these are global attractors for GKRF.  That is, on Hopf surfaces admitting a GKRS, one expects (cf. \cite{st-geom}) that the GKRF (and even more generally pluriclosed flow) exists globally and converges to the unique (up to scale and automorphism) soliton.    This was shown for certain invariant initial data (\cite{st-us-19} Theorem 1.1 (3)).  Recently, in \cite{ga-jo-st-21}, it was shown that on a \emph{standard} Hopf surface $(M, I)$, the pluriclosed flow (and hence the GKRF) always exists globally and converges to a conformally flat Hopf metric $g^{s}$ (which is also a GKRS).
\end{rmk}

\section{Background}\label{sec:bg}
%
%
%
%

%
%
%
%
In this section we collect the necessary background on generalized K\"ahler geometry. We follow the \emph{bihermitian} interpretation of the generalized K\"ahler geometry instead of the more fundamental formulation in the language of \emph{generalized geometry}. For a more detailed introduction to the subject see, e.g.,~\cite{gu-14}.

\begin{defn}[Generalized K\"ahler structures] \label{def:gk}
	A manifold $M^{2n}$ with a Riemannian metric $g$ and two compatible complex structures $I$ and $J$ is called \emph{generalized K\"ahler} (GK) if
	\[
	d^c_I\omega_I= H = -d^c_J\omega_J\qquad\mbox{and}\qquad d H=0,
	\]
	where $\omega_I=gI$ and $\omega_J=gJ$.  A GK manifold $(M^{4n},g,I,J)$ of dimension divisible by 4 is \emph{even}, if $I$ and $J$ induce the same orientation on $TM$, otherwise it is called \emph{odd}~\cite[Remark\,1.13]{gu-14}.
\end{defn}

We call the 3-form $H:=d^c_{I}\omega_I$ the \emph{torsion} of $(M^{2n},g,I,J)$ and denote by $\theta_I,\theta_J\in \Gamma(\Lambda^{1}(M,\R))$ the \emph{Lee forms} of the underlying Hermitian structures $(M^{2n},g,I)$ and $(M^{2n},g,J)$ respectively, characterized by the identities
\begin{equation*}
d\omega_I^{n-1}=\theta_I\wedge\omega_I^{n-1},\qquad d\omega_J^{n-1}=\theta_J\wedge\omega_J^{n-1}.
\end{equation*}

\begin{defn}[Log-symplectic GK structures]
	Let $(M^{2n},g,I,J)$ be a GK manifold. If the operator $I+J$ is globally invertible, then $M$ admits a distinguished symplectic form
	\[
	F_+=-2g(I+J)^{-1}.
	\]	
	In this case we will say that a GK manifold  $(M^{2n},g,I,J)$ is \emph{symplectic-type}. If instead the operator $I-J$ is globally invertible, we will still say that $(M^{2n},g,I,J)$ is \emph{symplectic-type}, tacitly assuming that the underlying symplectic form is
	\[
	F_-=-2g(I-J)^{-1}.
	\]
	If $I+J$ (or $I-J$) is invertible on an open dense set
	\[
	U_+=\{x\in M\ |\ (I+J)_x \mbox{ is invertible}\}
	\]
	(resp. $U_-=\{x\in M\ |\ (I-J)_x \mbox{ is invertible}\}$)
	we will say that $(M,g,I,J)$ is \emph{log-symplectic}.
\end{defn}

Associated with any GK manifold $(M,g,I,J)$ there is a real Poisson tensor:
\begin{equation*}
\sigma=\frac{1}{2}[I,J]g^{-1},
\end{equation*}
which is a common real part of complex-valued $I$- (resp.\,$J$-) holomorphic type (2,0) Poisson tensors
\begin{equation}\label{eq:poisson_holo}
\sigma_I=\sigma-\sqrt{-1}I\sigma\quad (\mbox{resp. }\sigma_J=\sigma-\sqrt{-1}J\sigma),
\end{equation}
see \cite{ap-ga-gr-99, po-97, hi-06}.  Assuming that $\sigma$ is invertible at a point in $M$, then $\sigma_I^{-1}$ is locally a holomorphic symplectic form, so $n$ is necessarily even, and the tensor $\sigma_I^{n/2}$ provides a nonzero $I$-holomorphic section of the anticanonical bundle $-K_{M,I}$. Thus the zero locus
\[
\{\sigma_I^{n/2}=0\}=\{x\in M\ |\ [I,J]=(I-J)(I+J) \mbox{ is not invertible} \}=M\backslash (U_+\cap U_-)
\]
is an $I$-analytic subset of real codimension 2. Interchanging the roles of $I$ and $J$ we also conclude that $M\backslash (U_+\cap U_-)$ is $J$-analytic.

\begin{defn}[Log-nondegenerate GK structures]
	An even GK manifold $(M^{2n},g,I,J)$ is \emph{nondegenerate} if the underlying Poisson tensor $\sigma$ is globally invertible. If $\sigma$ is invertible at one point, then by the above observation, $\sigma$ is invertible on a complement of an analytic set
	\[
	Z=\{\sigma_I^{n/2}=0\}=M\backslash (U_+\cap U_-).
	\]
	In the latter case we say that $(M,g,I,J)$ is \emph{log-nondegenerate}.
\end{defn}
\begin{rmk}
Let us make several important observations about log-symplectic and log-nondegenerate GK structures:
\begin{enumerate}
	\item Since the complement of $U_+\cap U_-$ in $M$ is a simultaneously $I$ and $J$ analytic set, either $U_+\cap U_-$ is open and dense, or it is empty.
	\item If $(M,g,I,J)$ is log-nondegenerate, then on the open dense subset $U_+\cap U_-$ the tensor $[I,J]$ is invertible, so we can define three symplectic forms $\sigma^{-1}$, $\sigma^{-1}I$, $\sigma^{-1}J$.
	\item The symplectic forms
	\[
	F_{\pm}=-2g(I\pm J)^{-1}
	\]
	defined on $U_{\pm}$ satisfy the identity  $F_{\pm}=\sigma^{-1}J\mp\sigma^{-1}I$ on $U_+\cap U_-$.
	\item The Riemannian metric $g$ can be recovered from the symplectic forms $F_{\pm}$ as
	\[
	g=-(F_+ J)^{\mathrm{sym}}=(F_- J)^{\mathrm{sym}},
	\]
	where $\mathrm{sym}$ denotes the symmetric component of a tensor in $T^*M^{\otimes 2}$ (see~\cite[\S 2.3]{ap-st-17}).
\end{enumerate}
\end{rmk}

In what follows we extend a complex structure $I\colon TM\to TM$ to an operator on $T^*M\to T^*M$ using the metric contractions $g\colon TM\to T^*M$ and $g^{-1}\colon T^*M\to TM$ so that for $\alpha\in T^*M$
\[
(I\alpha)(X):=(gIg^{-1}\alpha)(X)=-\alpha(IX).
\]

On the locus where $I+J$ and $I-J$ are both invertible, there are important identities relating $\theta_I$ and $\theta_J$ with $\det(I+J)$ and $\det(I-J)$.

\begin{lemma}[{\cite[Lemma\,3.8]{ap-st-17}}]\label{lm:lee_identity}
	Let $(M,g,I,J)$ be a log-nondegenerate GK structure. Then on $U_{\pm}$ we have
	\begin{equation}\label{eq:lee_identity}
	I\theta_I\pm J\theta_J=\frac{1}{2}(I\pm J)d\log\det(I\pm J).
	\end{equation}
	Furthermore, $\theta_I^\sharp-\theta_J^\sharp=\sigma d\Phi$, where $\Phi:=\frac{1}{2}\log\frac{\det(I-J)}{\det(I+J)}$.
\end{lemma}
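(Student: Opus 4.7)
The plan is to prove the $U_+$ identity; the $U_-$ identity follows by the symmetry $J\mapsto -J$ (which swaps $U_+\leftrightarrow U_-$ and $F_+\leftrightarrow F_-$), and the second identity is then a purely algebraic consequence of the two. The key tool is the comparison of the symplectic volume $F_+^n$ with the Hermitian volume $\omega_I^n$.

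From $F_+=-2g(I+J)^{-1}$, $\omega_I=gI$, and $\det I=1$, a direct Pfaffian calculation gives the pointwise identity
\[
F_+^n \;=\; \varepsilon\cdot 2^n\,\bigl(\det(I+J)\bigr)^{-1/2}\,\omega_I^n
\]
on $U_+$, with orientation sign $\varepsilon\in\{\pm 1\}$ (well-defined because $I+J$ is $g$-skew, so $\det(I+J)\geq 0$ globally, with equality precisely on $M\setminus U_+$); the analogous identity holds with $\omega_J^n$ in place of $\omega_I^n$. On the dense subset $U_+\cap U_-\subset U_+$, the holomorphic Poisson tensor $\sigma_I^{n/2}$ is a nowhere-vanishing $I$-holomorphic trivialization of $K_{M,I}^{-1}$; combining the two volume identities above with the companion identity $F_-^n=\varepsilon'\cdot 2^n(\det(I-J))^{-1/2}\omega_I^n$ shows that its squared Hermitian norm with respect to the metric induced by $\omega_I^n$ is a universal constant times $(\det(I+J)\det(I-J))^{-1/2}$. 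By the standard Hermitian-geometric identification of the Lee form with the logarithmic derivative of the norm of a local holomorphic trivialization of $K^{-1}$ (modulo Chern/Bismut bookkeeping), this recovers $I\theta_I$ as an explicit combination of $d\log\det(I+J)$ and $d\log\det(I-J)$; running the symmetric argument for $\sigma_J^{n/2}\in K_{M,J}^{-1}$ yields the analogous expression for $J\theta_J$, and the $\det(I-J)$ contributions cancel in the sum, giving the first identity on $U_+\cap U_-$. By continuity it extends to all of $U_+$. (An alternative, more elementary but heavier route avoids the Chern machinery by expanding $dF_+^{n-1}=0$ directly in a local complex frame and matching coefficients against $d\omega_I^{n-1}=\theta_I\wedge\omega_I^{n-1}$.)

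For the second identity, set $\Psi=d\log\det(I+J)+d\log\det(I-J)$ so that $d\log\det(I\pm J)=\tfrac12\Psi\mp d\Phi$. Adding and subtracting the two $U_\pm$ identities and solving for $\theta_I$ and $\theta_J$ individually yields
\[
\theta_I \;=\; \tfrac14\Psi + \tfrac12\,IJ\,d\Phi,\qquad \theta_J \;=\; \tfrac14\Psi + \tfrac12\,JI\,d\Phi,
\]
and hence $\theta_I-\theta_J=\tfrac12[I,J]d\Phi$. Applying $\sharp=g^{-1}:T^*M\to TM$, and using that $[I,J]$ is skew-adjoint with respect to $g$ (which with the paper's convention $(I\alpha)(X)=-\alpha(IX)$ gives $([I,J]\alpha)^\sharp=[I,J]\alpha^\sharp$), one concludes
\[
\theta_I^\sharp-\theta_J^\sharp \;=\; \tfrac12[I,J]g^{-1}d\Phi \;=\; \sigma\,d\Phi.
\]

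The main obstacle is the Hermitian-geometric step identifying $I\theta_I$ with (the real projection of) $d\log|\sigma_I^{n/2}|^2_{\omega_I^n}$: this requires careful tracking of the $(1,0)$/$(0,1)$ decomposition of the Chern connection one-form on $K_{M,I}^{-1}$ and its comparison with the Bismut connection on a non-K\"ahler Hermitian manifold. Once this identification is pinned down, the Pfaffian volume comparison and the final dualization via $\sharp$ are elementary.
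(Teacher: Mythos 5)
First, note that the paper itself offers no proof of this lemma: it is imported verbatim from \cite[Lemma\,3.8]{ap-st-17}, so there is no internal argument to compare against, and your proposal must stand on its own. Judged that way, it has a genuine gap at its central step. The Pfaffian identity $F_\pm^n=\pm\,2^n\det(I\pm J)^{-1/2}\,\omega_I^n$ is correct but purely pointwise--algebraic: it holds for any metric compatible with $I$ and $J$, with no use of integrability or of $dF_\pm=0$; likewise $|\sigma_I^{n/2}|^2_g$ is an algebraic function of $(g,I,J)$ at each point. From such zeroth-order data and its exterior derivative one cannot recover the Lee form, because there is no ``standard Hermitian-geometric identification of the Lee form with $d\log$ of the norm of a holomorphic trivialization of $K^{-1}$.'' In local holomorphic coordinates $\theta_I^{1,0}$ contains the divergence-type term $g^{k\bar l}\partial_k g_{j\bar l}$ in addition to $\partial_j\log\det g$, and only the latter is seen by the norm of an anticanonical section (take $g=\delta+\epsilon h$ with $h$ trace-free but not divergence-free: then $d\log|s|^2=O(\epsilon^2)$ while $\theta=O(\epsilon)$). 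The genuinely standard facts in this direction live one derivative higher --- $\rho^C_I=-\tfrac12 dd^c_I\log|s|^2$ for a holomorphic trivialization $s$ of $K_{M,I}^{-1}$, and $\rho^B_I-\rho^C_I=-d(I\theta_I)$ --- and these pin down only $d(I\theta_I)$, never $I\theta_I$ itself. Since \eqref{eq:lee_identity} is a first-order identity, its proof must use the first-order differential content of the GK hypothesis, namely $dF_\pm=0$ (equivalently $d^c_I\omega_I=-d^c_J\omega_J$), which your primary route never invokes. The parenthetical ``alternative route'' --- expanding $dF_+=0$ against $\omega_I^{n-1}$, using $(F_+)^{1,1}_I=\omega_I$ and the algebraic expression for $(F_+)^{2,0}_I$ --- is in fact the proof rather than an alternative, and it is not carried out.

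The remaining pieces are fine. The $J\mapsto -J$ symmetry does exchange the two identities in \eqref{eq:lee_identity}, and the linear algebra in the second half is correct: solving the two identities for $\theta_I$ and $\theta_J$ gives $\theta_I-\theta_J=\tfrac12[I,J]\,d\Phi$ on $U_+\cap U_-$, and with the paper's convention $(I\alpha)(X)=(gIg^{-1}\alpha)(X)$ one has $g^{-1}([I,J]\alpha)=[I,J]\alpha^\sharp$, whence $\theta_I^\sharp-\theta_J^\sharp=\tfrac12[I,J]g^{-1}d\Phi=\sigma\, d\Phi$. One minor slip: the exponent in your expression for $|\sigma_I^{n/2}|^2$ should be $+1/2$ rather than $-1/2$, since the norm vanishes (rather than blows up) where $[I,J]$ degenerates, cf.\ $|\sigma_I|^2_g=1-p^2$ in the four-dimensional case.
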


\begin{rmk}
	If $\dim_\R M=4$ and GK structure $(M^4,g,I,J)$ is even, then either $(M^4,g,I,J)$ is K\"ahler with $I=\pm J$, or $M$ is log-nondegenerate and $U_+\cap U_-\subset M$ is an open dense subset with the \emph{non-intersecting} complements $Z_{\pm}=M^4\backslash U_{\pm}$ being analytic sets $Z_{\pm}=\{x\in M^4\ |\ (I\pm J)_x=0\}$. Thus any non-K\"ahler even GK structure on $M^4$ defines a \emph{disconnected} canonical divisor $Z_+\cup Z_-$.
\end{rmk}

%
%
%
%

The fundamental examples of GK structures on non-K\"ahler surfaces are given by Hopf surfaces.

\begin{defn}\label{def:hopf}
	A \emph{primary} Hopf surface is a quotient of $\C^2\backslash\{\mathbf{0}\}$ by a holomorphic contraction $A\colon\C^2\backslash\{\mathbf{0}\}\to \C^2\backslash\{\mathbf{0}\}$:
	\[
	(M,I)=(\C^2\backslash\{\mathbf{0}\})/\la A\ra
	\]
	By a result of Kodaira, in appropriate coordinates $(z_1,z_2)$ on $\C^2$, the contraction $A$ is given by
	\[
	A\colon (z_1,z_2)\mapsto (\alpha z_1+\lambda z_2^q,\beta z_2),
	\]
	where $\alpha,\beta\in\C$ satisfy $1<|\alpha|\leq|\beta|<1$ and either $\lambda=0$ or $\alpha=\beta^q$. If $\lambda=0$, and $A$ is given by
	\begin{equation}\label{eq:hopf_def_A}
		A\colon (z_1,z_2)\mapsto (\alpha z_1,\beta z_2),
	\end{equation}
	the corresponding Hopf surface $(M,I_{\ga\gb})$ is called \emph{diagonal}, and in the special case $\brs{\ga} = \brs{\gb}$ we call it \emph{standard}.  One can easily see that in all cases $M\simeq S^3\times S^1$ as a smooth manifold.
	
	A free finite holomorphic quotient of a primary Hopf surface is called a \emph{secondary} Hopf surface. All such quotients were classified by Kato in~\cite{ka-89,ka-89-er}.  In this paper we will encounter only \emph{cyclic quotients} of diagonal Hopf surfaces $(M,I_{\alpha\beta})$ by a subgroup
	\[
	\Gamma_{k,\ell}\subset \mathrm{Aut}(M,I_{\alpha\beta}),\quad \Gamma_{k,\ell}\simeq \Z_\ell
	\]
	generated by the multiplication by primitive $\ell^{\mathrm{th}}$ roots of unity:
	\begin{equation}\label{Gamma}
	(z_1,z_2)\mapsto (\exp{\tfrac{2\pi \sqrt{-1}k}{\ell}}z_1, \exp{\tfrac{2\pi \sqrt{-1}}{\ell}}z_2)
	\end{equation}
	where $\ell>k\geq 0$ and $(k,\ell)=1$.
\end{defn}

\begin{rmk}
It is proved in~\cite[Theorem\,3]{ap-gu-07} that if a Hopf surface $(M,I)$ is a part of an \emph{odd} GK structure $(M,g,I,J)$, then $(M,I)$ is biholomorphic to a quotient of the diagonal Hopf surface $(M,I_{\alpha\beta})$ by a cyclic group $\Gamma_{k,l}\simeq \Z_\ell$. Conversely, any such Hopf surface is a part of an odd GK structure.  It is also observed in~\cite[Prop.\,3.2]{st-us-19} that any diagonal Hopf surface is a part of an explicit even GK structure (see  also \cite[Example\,1.21]{gu-14} for the case $|\alpha|=|\beta|$).
\begin{ex}[{Generalized K\"ahler structure on the diagonal Hopf surfaces~\cite[Ex.\,3.10]{st-us-20-GH}}]\label{ex:hopf_diagonal_gk}
	Let $(M,I_{\alpha\beta})$ be a diagonal Hopf surface:
	\[
	M=(\C^2_{z_1,z_2}\backslash\{\mathbf 0\})/\la A\ra.
	\]
	In~\cite{st-us-19} and~\cite{st-us-20-GH} we have described a family of GK structures on $M$ determined by one scalar function.
	
	Let $\C^2\to(\C^*)^2/\la A\ra\subset \C^2\backslash\{\mathbf 0\}/\la A\ra=M$ be the universal cover of the nondegenerate part of the Hopf surface. Choose $w_i=\log z_i$ to be the coordinates in $\C^2$, $w_i=x_i+\sqrt{-1}y_i$ and denote $a=\log|\alpha|$, $b=\log|\beta|$.  Given a function $p\colon \R\to (-1,1)$ of a real argument $2(\frac{b}{a} x_1-x_2)$, we can define a second complex structure $J$ on $\C^2$ via a complex-valued symplectic form (in~\cite{st-us-19} we used a function $k=(p+1)/2$ instead of $p$):
	\[
	\begin{split}
	\Omega_J&=\left(dw_1-\frac{a}{b}d\bar w_2\right)\wedge \left(\frac{b(1+p)}{2a}d\bar{w_1}+\frac{1-p}{2}dw_2\right).
	\end{split}
	\]
	The form $\Omega_J$ is decomposable, closed and nondegenerate thus it uniquely defines an integrable complex structure $J$ such that $dw_1-\frac{a}{b}d\bar w_2$ and $\frac{b(1+p)}{2a}d\bar{w_1}+\frac{1-p}{2}dw_2$ span $\Lambda^{1,0}_I$. One can directly check that with
	\[
	g=\Re\left(\frac{b(1+p)}{2a}dw_1\otimes d\bar{w_1}+\frac{a(1-p)}{2b}dw_2\otimes d\bar{w_2}\right).
	\]
	the triple $(g,I_{\alpha\beta},J)$ defines a GK structure on $\C^2_{w_1,w_2}$ (here by abuse of notation we denote $I_{\alpha\beta}$ the standard complex structure on $\C^2_{w_1,w_2}$). This structure descends to $(\C^*)^2_{z_1,z_2}/\la A\ra$, and under certain assumptions on the asymptotic of $p(x)$ as $x\to\pm\infty$, extends to a smooth GK structure $(M,g,I_{\alpha\beta},J)$ on the Hopf surface.
\end{ex}

In Section~\ref{sec:hopf} below we study the existence of GK structures on secondary Hopf surfaces and show that,  similarly to the odd case, secondary Hopf surfaces admit even GK structures only in the case of cyclic quotients \eqref{Gamma}.
\end{rmk}


%
%
%
%
\section{Hamiltonian flow construction}\label{sec:flow_construction}

Let $(M,\omega, I)$ be a K\"ahler manifold. A remarkable feature of K\"ahler geometry is the notion of a \emph{K\"ahler class}~--- for any smooth function $\phi\in C^\infty(M,\R)$ we can define a new (1,1)-form
\[
\omega_\phi:=\omega+dd^c_I\phi,
\]
and as long as $\omega_\phi$ is positive definite, $(M,\omega_\phi, I)$ is a K\"ahler manifold with $[\omega_\phi]=[\omega]\in H^{1,1}(M,\R)$. In particular, for any $\fp\in C^\infty(M,\R)$ there is an infinitesimal deformation $\omega_t$ of $\omega$, such that 
\[
\dt \omega_t=dd^c_I\fp.
\]
In this section we review a similar construction in generalized K\"ahler geometry. This construction was first presented in the context of bihermitian structures on 4-dimensional manifolds in~\cite[\S 4.2]{ap-ga-gr-99}, where it is attributed to Joyce (see also Hitchin's GK structures on del Pezzo surfaces~\cite{hi-07}). In~\cite{li-ro-vu-za-07} the authors presented a version of the construction in higher dimensions. Gualtieri in \cite[\S 7]{gu-10} defined the Hamiltonian flow construction on a symplectic GK manifold. The presentation below follows ~\cite{gi-st-20}.  We start by fixing some terminology.

\begin{defn}\label{def:hamiltonian_vf}
	Let $(M,\sigma)$ be a smooth manifold with a real Poisson tensor $\sigma\in \Gamma(\Lambda^2(TM))$. We will say that a vector field $X\in \Gamma(TM)$ is $\sigma$-\emph{Hamiltonian}, if there exists a function $\fp\in C^\infty(M,\R)$ such that
	\[
	X=-\sigma(d\fp).
	\]
	To specify the role of the potential $\fp$ explicitly, we will often write $X=X_{\fp}$.  We will say that a vector field $X$ \emph{preserves $\sigma$} if $\mc L_{X}\sigma=0$.
\end{defn}

	Clearly any $\sigma$-Hamiltonian vector field $X_\fp$ preserves $\sigma$.  The converse statement is true locally if $\sigma$ is invertible, as one can take $\fp$ to be the local antiderivative of the closed 1-form $-\sigma^{-1}(X)$. However, if $\sigma$ is only generically invertible and $X$ preserves $\sigma$, there might not be a smooth Hamiltonian potential $\fp$ for $X$ even locally. For instance, the vector field $X=\del_y$ on $\R^2_{x,y}$ preserves the Poisson structure
	\[
	\sigma=x\del_x\wedge\del_y,
	\]
	yet there is no local Hamiltonian potential in a neighbourhood of $\{x=0\}\subset\R^2$. Of course, we could extend our notion of $\sigma$-Hamiltonian potentials, and consider a function $\fp=-\log|x|$ as a singular Hamiltonian potential for $X$.

	The Poisson structure $\sigma$ induces a bracket $\{\cdot,\cdot\}_\sigma$ on $C^\infty(M,\R)/\R$,  defined as
	\[
	\{\fp,\psi\}_\sigma:=\sigma(d\fp,d\psi).
	\]
	This pairing turns $C^\infty(M,\R)/\R$ into an infinite-dimensional Lie algebra. Given a Hamiltonian vector field $X_\fp$ and a function $\psi$, we have
	\[
	X_{\fp}\cdot \psi=d\psi(X_{\fp})=-\sigma(d\fp,d\psi)=-\{\fp,\psi\}_{\sigma}.
	\]
	Now, if $X_{\fp}$ and $X_{\psi}$ are two Hamiltonian vector fields, then
	\[
	[X_{\fp},X_{\psi}]=-\mc L_{X_\fp}(\sigma(d\psi))=\sigma(\{\fp,\psi\}_\sigma)=-X_{\{\fp,\psi\}_\sigma},
	\]
	so that the Lie algebra $(C^\infty(M,\R)/\R, \{\cdot,\cdot\}_\sigma)$ is anti-isomorphic to the Lie algebra of Hamiltonian vector fields under the Lie bracket.

\begin{construction}[Flow construction of GK structures \cite{bi-gu-za-18,gi-st-20}]\label{cstr:flow}
	Let $(M,g,I,J)$ be a GK manifold. 
	Denote by $\fp_t\in C^\infty(M,\R)$ a one-parameter family of smooth functions on $M$. Consider 
	\[
	X_{\fp_t}:=-\sigma(d\fp_t)
	\]
	the $\sigma$-Hamiltonian vector field induced by the potential family $\fp_t$. Let $(g_t,I_t,J)$ be a one-parameter family of tensors solving
	\begin{equation} \label{f:flowconst}
	\left\{\begin{split}
	\dt g&=(dd^c_I\fp_t J)^{\mathrm{sym}},\\
	\dt I&=\mc L_{X_{\fp_t}}I,\\
	\dt J&=0.
	\end{split}
	\right.
	\end{equation}
	It is proved in~\cite[Theorem\,1.1; Prop.\,3.6]{gi-st-20} that such family defines a one-parameter family of generalized K\"ahler structures on $M$, as long as $g_t$ stays positive definite. The variation of $I$ can be equivalently expressed as
	\[
	\dt I=\sigma(dd^c_I\fp_t).
	\]
	Since in the family $(g_t,I_t,J)$ the complex structure $I_t$ flows by a family of $\sigma$-Hamiltonian diffeomorphisms, we call $(g_t,I_t,J)$ the \emph{flow construction} induced by a family of potentials~$\fp_t$.
			
	Along the flow construction we have the induced variations of the symplectic forms (whenever they are defined) and see that the Poisson tensor is fixed:
	\begin{equation}\label{eq:flow_evolution}
	\begin{split}
	\dt F_+&=-dd^c_I\fp_t,\\
	\dt F_-&=dd^c_I\fp_t,\\
	\dt \sigma &= 0.
	\end{split}
	\end{equation}
\end{construction}

The flow construction can be also defined for certain \emph{singular} functions $\fp$ as long as $dd^c_I\fp_t$ extends to a well-defined smooth 2-form on $M$ (see e.g.,~\cite{hi-07}). For example, the flow construction yields the trivial deformation if $(M,g,I,J)$ is log-nondegenerate and $\fp\in C^\infty(U_{\pm},\R)$ is an $I$-pluriharmonic function with logarithmic poles along the zero locus of $\sigma$. In this case, the vector field $X_{\fp}$ preserves $\sigma$ and is $I$-holomorphic, so that $I,J$ and $g=2\sigma^{-1}[I,J]$ are all stationary.

Underlying the flow construction, there is an infinite-dimensional Lie group $\mc H_\sigma$ with the Lie algebra $\mf{h}_\sigma=\left(C^\infty(M,\R)/\R,\ \{\cdot,\cdot\}_\sigma\right)$, where $\{\cdot,\cdot\}_{\sigma}$ is the Poisson bracket induced by $\sigma$. There is a natural homomorphism 
\[
\mc H_{\sigma}\to \mathrm{Diff}(M)
\]
where both groups act on $M$ from the right, so that along the flow construction $I_t$ is pulled back by a $\sigma$-Hamiltonian diffeomorphism. If $(M,g,I,J)$ is log-nondegenerate so that $\sigma$ is generically invertible, the above homomorphism is injective.

Thus we can think of the flow construction as the \emph{local} action of the group $\mc H_\sigma$ on the space $\mc{GK}(M)$ of generalized K\"ahler structures on $M$:
\[
\mc{GK}(M)\times \mc H_\sigma\to \mc{GK}(M).
\]
We denote by $\mc M=\mc M_{m_0}$ the orbit of a given structure $m_0=(M,g_0,I_0,J)$ under this action. Since the infinitesimal action induced by the flow potential $\fp_t$ is trivial if and only if $dd^c_I\fp_t=0$, and on a compact manifold $\Ker(dd^c_I\big|_{C^\infty(M,\R)})=\R$, we conclude that for any $m\in \mc M_{m_0}$ there is an identification
\[
T_{m}\mc M\simeq C^\infty(M,\R)/\R.
\]
Furthermore, with this identification any fixed smooth function corresponds to a right-invariant vector field on $\mc H_\sigma$, which descends to the corresponding fundamental vector field on $\mc M$ via the above action.

If there is a compact Lie group $G\subset \mathrm{Aut}^0(M,J)$ acting on $M$, then we can similarly define the $G$-equivariant flow construction by considering the space of $G$-equivariant flow potentials
\[
C^\infty(M,\R)^G=\{\fp\in C^\infty(M,\R)\ |\ \gamma^*f=f \mbox{ for any } \gamma\in G \}.
\]
We will denote the underlying infinite dimensional group by $\mc H_{\sigma}^G$ and its orbit by $\mc M^G$.

\begin{ex}[K\"ahler case]\label{ex:Kahler-class}
	If $(M,g,I,J)$ is compact K\"ahler manifold with $I=J$, then $\sigma\equiv 0$, and $\omega:=F_+$ is the K\"ahler form. Hence the group $\mc H$ can be identified with the additive group $C^\infty(M,\R)/\R$, and the action induced by a path $\fp_t\in C^\infty(M,\R)/\R\simeq \mf h$ on a K\"ahler metric is given by
	\[
	\omega\mapsto \omega+dd^c_I\phi,\qquad \phi=-\int_{0}^{t}\fp_s\,ds\in \mc H.
	\]
	Thus, it follows from the $dd^c$-lemma that the orbit $\mc M$ can be identified with the set of K\"ahler manifolds in a given class $[\omega]\in H^{1,1}(M,\R)$ and is isomorphic to an open cone:
	\[
	\mc M=\{\phi \in C^\infty(M,\R)/\R\ |\ \omega+dd^c_I\phi>0 \}\subset \{\alpha\in [\omega]\in H^{1,1}(M,\R) \}.
	\]
\end{ex}

\section{Generalized K\"ahler-Ricci solitons}\label{sec:gks}

Given a generalized K\"ahler manifold $(M,g,I,J)$, we can use the flow construction to deform this structure. It is thus natural to ask whether we can find \emph{distinguished} GK structure in a given deformation class $\mathcal M$. This extends the central problem of K\"ahler geometry, which  is the Calabi question of the existence of  special K\"ahler metrics within a fixed K\"ahler class, see Example~\ref{ex:Kahler-class};  the search for K\"ahler-Einstein metrics, and their close cousins~--- K\"ahler-Ricci solitons, is a concrete example. The following definition is motivated by the study of the generalized Ricci flow (see~\cite{gf-st-20,PCFReg}), and provides a notion of `canonical metric' which naturally extends the theory of Calabi-Yau/K\"ahler-Ricci solitons.

\begin{defn}[Steady gradient generalized K\"ahler soliton]\label{def:gk_soliton}
	A GK manifold $(M,g,I,J)$ is a \emph{steady gradient generalized K\"ahler-Ricci soliton} (GKRS) if there exists a function $f\in C^\infty(M,\R)$ such that
	\begin{equation}\label{eq:soliton_system}
	\begin{cases}
	\Rc -\frac{1}{4}H^2+\nabla^2f=0\\
	d^* H+i_{\nabla f}H=0.
	\end{cases}
	\end{equation}
	The function $f$ is called the \emph{soliton potential} of $(M,g,I,J)$.
\end{defn}

	When  the GK manifold $(M,g,I,J)$ is K\"ahler with $I=\pm J$, we have $H=0$ and the system~\eqref{eq:soliton_system} recovers the usual steady K\"ahler-Ricci soliton equation
	\[
	\Rc+\nabla^2 f=0.
	\]
It is well known that a \emph{compact} steady K\"ahler-Ricci soliton necessarily has $f$ constant. A similar phenomenon occurs for steady GKRS solitons of \emph{symplectic type}:

\begin{prop}\label{pr:symp_soliton}
	Suppose that $(M,g,I,J)$ is a compact GK manifold of symplectic type, which is a GKRS for some $f\in C^\infty(M,\R)$. Then $f \equiv \mathrm{const}$, and $(M,g,I)$, $(M,g,J)$ are K\"ahler Ricci-flat.
\end{prop}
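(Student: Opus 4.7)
The plan is to argue in two steps: first, show that on the compact manifold the steady generalized Ricci soliton equations alone force $f \equiv \mathrm{const}$, leaving $\Rc = \tfrac14 H^2$ and $d^*H = 0$; then use the symplectic-type hypothesis to show $H \equiv 0$, at which point the GK compatibility $d^c_I\omega_I = H = -d^c_J\omega_J$ collapses to $d\omega_I = 0 = d\omega_J$, so that $(M,g,I)$ and $(M,g,J)$ are both K\"ahler and $\Rc = 0$.

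For the first step, I would trace the first soliton equation to obtain $R - \tfrac14|H|^2 + \Delta f = 0$, then combine the contracted second Bianchi identity applied to the Bismut-Ricci tensor with the evolution equation $d^*H + i_{\nabla f}H = 0$ to derive a conservation law of the form
\[
R - \tfrac14 |H|^2 + |\nabla f|^2 = C
\]
for some constant $C$. Subtracting the two identities yields $\Delta f - |\nabla f|^2 = -C$; multiplying by $e^{-f}$ and integrating over compact $M$ (using $\int_M \Delta(e^{-f})\,dV_g = 0$) forces $C = 0$ and $\nabla f \equiv 0$, hence $f \equiv \mathrm{const}$. This is the standard extension of Calabi's argument from the Ricci case to the generalized Ricci case, and leaves the reduced system $\Rc = \tfrac14 H^2$, $d^*H = 0$.

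For the second step, assume without loss of generality that $F_+ = -2g(I+J)^{-1}$ is globally defined, so that $(M, F_+)$ is a compact symplectic manifold. The plan is to derive a global integral vanishing identity by pairing a scalar consequence of the reduced soliton equations against the closed symplectic volume $F_+^n/n!$, exploiting Lemma~\ref{lm:lee_identity} to rewrite the Lee forms in terms of $d \log \det(I \pm J)$ on the open dense set $U_+ \cap U_-$. Since $F_+^n$ is a globally smooth positive density and $F_+$ is closed, Stokes' theorem on compact $M$ should yield an identity of the form
\[
\int_M |H|_g^2 \, \frac{F_+^n}{n!} = 0,
\]
forcing $H \equiv 0$, and the conclusion of the proposition follows.

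The main obstacle is pinpointing the precise integral identity in the second step. The symplectic-type hypothesis must enter essentially, since the non-K\"ahler steady GKRS on Hopf surfaces of \cite{st-us-19} show that the conclusion of the proposition genuinely fails when only the weaker log-nondegenerate hypothesis holds. The key mechanism that the plan relies on is the global closedness and smoothness of $F_+$ on all of $M$, which enables the Stokes pairing that is simply not available when $F_+$ is defined only on a proper open subset.
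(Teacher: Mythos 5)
Your Step 1 is where the argument breaks down, and it breaks down irreparably: it is \emph{not} true that a compact steady gradient generalized Ricci soliton must have $f\equiv\mathrm{const}$. The non-K\"ahler steady GKRS on diagonal Hopf surfaces constructed in \cite{st-us-19} (Theorem \ref{thm:hopf_soliton_existence} of this paper) are compact, steady, and have non-constant $f$; they satisfy exactly the system \eqref{eq:soliton_system} you start from. The reason your proposed derivation fails is that the conservation law you write down, $R-\tfrac14|H|^2+|\nabla f|^2=C$, is not the correct Bianchi-type identity in the presence of torsion: when one takes the divergence of $\Rc-\tfrac14 H^2+\nabla^2 f=0$ and uses $d^*H+i_{\nabla f}H=0$ together with $dH=0$, the $|H|^2$ term in the resulting conserved quantity carries a \emph{different} coefficient from the one appearing in the trace of the first soliton equation (cf.\ \cite{gf-st-20}). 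Subtracting the two identities therefore leaves a residual multiple of $|H|^2$, and the Calabi/Hamilton maximum-principle argument no longer closes. Any proof of Proposition \ref{pr:symp_soliton} must use the symplectic-type hypothesis \emph{before} concluding anything about $f$; the logical order has to be: first kill $H$, then invoke the classical compact steady KRS rigidity to get $f$ constant.

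Your Step 2 has the right instinct --- the global definition and closedness of $F_+$ is indeed the essential input --- but you have not identified the mechanism, and you acknowledge as much. The paper's argument is more direct than a Stokes pairing against $F_+^n/n!$: decompose $F_+=\beta_I+\omega_I+\bar{\beta_I}$ into $I$-types; since $dF_+=0$ and $H=d^c_I\omega_I$ has type $(2,1)+(1,2)$, one finds $H=d\bigl(\sqrt{-1}(\bar{\beta_I}-\beta_I)\bigr)=:db$ with $b$ a \emph{globally defined smooth} real $2$-form. The second soliton equation (which holds regardless of whether $f$ is constant) is equivalent to $d^*(e^{-f}H)=0$, and pairing against $b$ and integrating by parts gives
\begin{equation*}
0=\int_M \langle b, d^*(e^{-f}H)\rangle_g\, dV_g=\int_M |db|^2 e^{-f}\, dV_g,
\end{equation*}
so $H=0$. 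The weight in the integral identity is $e^{-f}dV_g$, not the symplectic volume, and the role of the symplectic-type hypothesis is precisely to guarantee that the primitive $b$ of $H$ exists globally (on a log-nondegenerate but non-symplectic-type structure, $\beta_I$ is only defined on $U_+$ and blows up along $Z_+$, which is why the conclusion fails there). Once $H=0$, both Hermitian structures are K\"ahler and the classical result gives $f\equiv\mathrm{const}$ and $\Rc=0$. As written, your proposal establishes neither step.
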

\begin{proof}
	Assume that $I+J$ is globally invertible, and consider $F_+=-2g(I+J)^{-1}$. Let
	\[
	\omega_I=(F_+)_I^{1,1},\quad \beta_I=(F_+)^{2,0}_I
	\]
	be the components of $F_+$ according to the complex $I$-type. We have
	\[
	d^c_I\omega_I=H,
	\]
	thus, since $F_+$ is closed, and $H$ is of type $(2,1)+(1,2)$, we conclude
	\[
	H=d^c_I\omega_I=I\cdot d\omega_I=-I\cdot (\bar{\del}_I \beta_I+\del_I\bar{\beta_I})=d\sqrt{-1}(\bar{\beta_I}-\beta_I).
	\]
	Denote $b:=\sqrt{-1}(\bar{\beta_I}-\beta_I)$, so that $H=db$. Furthermore observe that the second equation of ~\eqref{eq:soliton_system} implies
	\[
	d^*(e^{-f} H)=0.
	\]
	It follows by integration by parts that
	\[
	0=\int_M \la b, d^*(e^{-f}H)\ra_g dV_g=\int_M |db|^2e^{-f}dV_g.
	\]
	Thus $0 = db=H$, so both $(M,g,I)$ and $(M,g,J)$ are steady K\"ahler-Ricci solitons. It remains to note that any compact steady K\"ahler-Ricci soliton is Ricci flat with $f\equiv \mathrm{const}$.
\end{proof}

Proposition~\ref{pr:symp_soliton} implies that if we are seeking nontrivial (i.e., with a non-constant $f$) compact steady GKRS, then we have to allow for both $F_+$ and $F_-$ not being globally defined. The next best thing is having $F_+$ and $F_-$ defined on an open dense set, which is equivalent to $(M,g,I,J)$ being log-nondegenerate.  To begin understanding these structures we recall that on a K\"ahler background the soliton equations imply that the vector field $X=I\nabla f$ is Killing and real-holomorphic. A similar phenomenon occurs on any generalized K\"ahler background:

\begin{prop}[{\cite[Prop.\,4.1]{st-us-20-GH}}]\label{prop:soliton_equiv}
	Let $(M,g,I,J)$ be a GK manifold and $f\in C^\infty(M,\R)$ a smooth function. Then the following are equivalent:
	\begin{enumerate}
		\item $(M,g,I,J)$ is a steady gradient GKRS with the soliton potential $f\in C^\infty(M,\R)$;
		\item The vector field
		\[
		X_I:=\frac{1}{2}I(\theta_I^\sharp-\nabla f)
		\]
		is Killing and real-holomorphic, and
		\[
		(\rho^B_{I})^{1,1}_I=-\mc L_{IX_I}\omega_I
		\]
		\item The vector field
		\[
		X_J:=\frac{1}{2}J(\theta_J^\sharp-\nabla f)
		\]
		is Killing and real-holomorphic, and
		\[
		(\rho^B_{J})^{1,1}_J=-\mc L_{JX_J}\omega_J,
		\]
	\end{enumerate}
	where $\rho^B_I$ (resp.\,$\rho^B_J$) is the Bismut-Ricci curvature of $(M,g,I)$ (resp.\,$(M,g,J)$).
\end{prop}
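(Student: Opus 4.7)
The plan is to translate the tensorial GKRS system into identities living purely in the Hermitian data $(g,I)$ via the standard pluriclosed formulas for the Bismut-Ricci form, and to recognize the result as the three conditions on $X_I$ asserted in (2). The equivalence (1)$\Leftrightarrow$(3) then follows by symmetry in $I\leftrightarrow J$, since the GK system is invariant under $(I,J,H)\mapsto (J,I,-H)$ while $H^2$ depends only on $H\otimes H$.

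The first step is to rewrite the equation $\Rc-\tfrac14 H^2+\nabla^2 f=0$ in terms of $\rho^B_I$. Combining the Bismut-Chern comparison $\rho^B_I=\rho^C_I+d(I\theta_I)$ with Gauduchon's expression for $\rho^C_I$ in terms of $\Rc$, $H$, and $\theta_I$ (valid on any pluriclosed Hermitian manifold), one obtains a pluriclosed identity expressing $(\rho^B_I)^{1,1}_I$ as a sum of the $(1,1)$-parts of $-\Rc(I\cdot,\cdot)$, $\tfrac14 H^2(I\cdot,\cdot)$, and $\tfrac12\mc L_{\theta_I^\sharp}\omega_I$. Substituting the first soliton equation replaces the $\Rc-\tfrac14 H^2$ contribution by $\nabla^2 f(I\cdot,\cdot)$, whose $(1,1)$-part is $\tfrac12(\mc L_{\nabla f}\omega_I)^{1,1}$. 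With $X_I=\tfrac12 I(\theta_I^\sharp-\nabla f)$ these combine to $-\mc L_{IX_I}\omega_I$, yielding the Bismut-Ricci equation in (2).

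The second step accounts for the remaining $(2,0)+(0,2)$-part of the first soliton equation together with the divergence equation $d^*H+i_{\nabla f}H=0$. On a pluriclosed manifold one splits $d^*H$ into a $d\theta_I$ contribution plus curvature terms, and $i_{\nabla f}H$ can be expressed through $\nabla^2 f$ and $\theta_I$; modulo the first soliton equation the two combine into $\mc L_{X_I}g=0$ and $\mc L_{X_I}I=0$, i.e., the statement that $X_I$ is Killing and $I$-holomorphic. Each step being reversible, the converse (2)$\Rightarrow$(1) follows immediately, and the same argument applied to $(g,J)$ gives the equivalence with (3).

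The main obstacle is the careful bookkeeping of Hermitian type decompositions. Every tensor in sight ($\Rc$, $\nabla^2 f$, $d\theta_I$, $d^*H$, $\mc L_X\omega_I$) splits into $(1,1)$ and $(2,0)+(0,2)$ parts, and the precise normalization of the pluriclosed Bismut-Ricci and divergence identities must be tracked consistently so that both the Killing/holomorphic piece and the Bismut-Ricci piece fall out simultaneously. This is the content of the detailed computation in \cite[Prop.\,4.1]{st-us-20-GH}.
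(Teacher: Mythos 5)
The paper does not actually prove this proposition: it is imported verbatim from \cite[Prop.\,4.1]{st-us-20-GH}, so there is no internal argument to compare against. Your outline is consistent with how the proof goes in that reference, and two of your structural points are sound: the reduction of $(1)\Leftrightarrow(3)$ to $(1)\Leftrightarrow(2)$ via the symmetry $(I,J,H)\mapsto(J,I,-H)$ is correct (the first soliton equation is quadratic in $H$ and the second is homogeneous, so both are preserved), and the identification of the $(1,1)$-part of $\nabla^2 f(I\cdot,\cdot)$ with $\tfrac12(\mc L_{\nabla f}\omega_I)^{1,1}$ is legitimate because $g((\mc L_{\nabla f}I)\cdot,\cdot)$ is of type $(2,0)+(0,2)$.

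That said, as a proof your proposal has a genuine gap: the entire second step is asserted rather than derived. It is not automatic that the $(2,0)+(0,2)$ component of $\Rc-\tfrac14H^2+\nabla^2f=0$ together with $d^*H+i_{\nabla f}H=0$ ``combine into'' $\mc L_{X_I}g=0$ and $\mc L_{X_I}I=0$; establishing that $X_I=\tfrac12 I(\theta_I^\sharp-\nabla f)$ is Killing and real-holomorphic is the non-trivial content of the cited result, and it requires the Bianchi-type identities for the Bismut connection (equivalently, the precise pluriclosed decomposition of $\rho^B_I(\cdot,I\cdot)$ into its symmetric and skew, $(1,1)$ and $(2,0)+(0,2)$ pieces, where the skew $(2,0)+(0,2)$ piece encodes $\mc L_{X_I}I$ and the symmetric piece encodes $\mc L_{X_I}g$). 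You name the right ingredients (Bismut--Chern comparison, Gauduchon's formula, the type decomposition), but you defer the computation that makes the equivalence work to the very reference being quoted, so the argument is a plan rather than a proof. Since the paper also treats this as a quoted external result, this is acceptable as a reading of the literature, but it should not be mistaken for an independent verification.
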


The vector fields $X_I$ and $X_J$ introduced in Proposition~\ref{prop:soliton_equiv} will play an important role below. The following elementary observation holds on any log-nondegenerate background.

\begin{lemma}[$F_{\pm}$-Hamiltonian potentials]\label{lm:ham_potentials}
	Let $(M^{2n},g,I,J)$ be a GK manifold and consider $f\in C^\infty(M,\R)$. Let
	\[
	X_I:=\frac{1}{2}I(\theta_I^\sharp-\nabla f),\quad X_J:=\frac{1}{2}J(\theta_J^\sharp-\nabla f).
	\]
	Then the vector fields $X_I\pm X_J$ are $F_{\pm}$-Hamiltonian on the locus where the corresponding symplectic forms are defined, with Hamiltonian potentials $\psi_{\pm}\in C^\infty(U_\pm,\R)$ given by
	\begin{equation}\label{eq:psi_pm}
	\psi_{\pm}=\frac{1}{2}\log\det(I\pm J)-f,\quad i_{X_I\pm X_J}F_{\pm}=-d\psi_{\pm}.
	\end{equation}
\end{lemma}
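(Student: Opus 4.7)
The plan is to directly compute $i_{X_I\pm X_J} F_\pm$ on $U_\pm$ and verify it equals $-d\psi_\pm$. The main ingredient is the Lee form identity of Lemma~\ref{lm:lee_identity}, which lets us rewrite the combination $I\theta_I^\sharp\pm J\theta_J^\sharp$ appearing in the definition of $X_I\pm X_J$ as $(I\pm J)$ applied to a gradient. Once this is done, the claim becomes a one-line cancellation $\left((I\pm J)^{-1}\right)(I\pm J)=\Id$.

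First, the convention $(I\alpha)(X)=-\alpha(IX)$ combined with $g$-compatibility of $I$ implies $(I\alpha)^\sharp=I\alpha^\sharp$, and likewise for $J$. Raising Lemma~\ref{lm:lee_identity} via $\sharp$ we therefore get $I\theta_I^\sharp\pm J\theta_J^\sharp = \tfrac{1}{2}(I\pm J)\nabla\log\det(I\pm J)$ on $U_\pm$. Substituting into the definitions of $X_I$ and $X_J$ and collecting terms yields $X_I\pm X_J = \tfrac{1}{2}(I\pm J)\nabla\psi_\pm$ with $\psi_\pm=\tfrac{1}{2}\log\det(I\pm J)-f$. Note that $I\pm J$ is $g$-antisymmetric (as both $I$ and $J$ are), so its restriction to $U_\pm$ has strictly positive determinant (equal to the square of its Pfaffian on the even-dimensional space $T_xM$), whence $\log\det(I\pm J)$ is smooth on $U_\pm$.

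The same $g$-antisymmetry of $(I\pm J)^{-1}$ on $U_\pm$ is exactly what renders $F_\pm=-2g(I\pm J)^{-1}$ an antisymmetric $2$-form, and pointwise one has $F_\pm(V,Y)=-2g((I\pm J)^{-1}V,Y)$. Plugging in $V=X_I\pm X_J$, the factors of $(I\pm J)$ and $(I\pm J)^{-1}$ cancel and we obtain $F_\pm(X_I\pm X_J, Y) = -g(\nabla\psi_\pm, Y) = -d\psi_\pm(Y)$, which is the asserted identity. There is no genuine obstacle; the only substantive step is the algebraic reshaping of $X_I\pm X_J$ into the form $\tfrac{1}{2}(I\pm J)\nabla\psi_\pm$, made possible by Lemma~\ref{lm:lee_identity}.
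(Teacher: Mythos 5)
Your proof is correct and follows essentially the same route as the paper: both contract $F_\pm=-2g(I\pm J)^{-1}$ against $X_I\pm X_J$ and invoke the Lee form identity of Lemma~\ref{lm:lee_identity} so that $(I\pm J)^{-1}$ cancels against $(I\pm J)$, leaving $-d\bigl(\tfrac{1}{2}\log\det(I\pm J)-f\bigr)$. The only difference is presentational — you additionally justify the positivity of $\det(I\pm J)$ on $U_\pm$ via the Pfaffian, which the paper takes for granted.
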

\begin{proof}
	We compute directly using~\eqref{eq:lee_identity}:
	\[
	\begin{split}
	i_{X_J\pm X_I}F_{\pm}=-2g(I\pm J)^{-1}\frac{1}{2}\left(J\theta_I^\sharp\pm J\theta_J^\sharp-(I\pm J)\nabla f\right)=-\frac{1}{2}d\log\det(I\pm J)+df,
	\end{split}
	\]
	as claimed.
\end{proof}

\begin{rmk}
	We can equivalently rewrite the identities~\eqref{eq:psi_pm} defining $\psi_+$ and $\psi_-$ as
	\begin{equation}\label{eq:psi_pm_v2}
	e^{\psi_{\pm}}\frac{F_{\pm}^n}{n!}=e^{-f}dV_g, \quad f\in C^\infty(M,\R).
	\end{equation}
	In particular the volume form on the left hand side which is initially defined only on $U_{\pm}\subset M$, extends to a smooth volume form on $M$. We will prove in Proposition~\ref{prop:gk_soliton_suff} that equations~\eqref{eq:psi_pm_v2} are equivalent to $(M,g,I,J)$ being a gradient steady GKRS.
\end{rmk}

On a gradient steady GKRS, the vector fields $X_I$ and $X_J$ satisfy several important geometric properties.
\begin{prop}\label{pr:XI_XJ_properties}
	Let $(M,g,I,J)$ be a gradient steady GKRS with $X_I$ and $X_J$ as in Proposition~\ref{prop:soliton_equiv}. Then
	\begin{enumerate}
		\item $\mc L_{IX_I}\sigma=\mc L_{JX_J}\sigma=0$,
		\item $\mc L_{X_I}\sigma=\mc L_{X_J}\sigma=0$.
\end{enumerate}
		If $(M,g,I,J)$ is log-symplectic, then
		\begin{enumerate}
		\item[(3)] $\mc L_{X_I}J+\mc L_{X_J}I=0$,
		\item[(4)] $X_I$ and $X_J$ are both $I$ and $J$ holomorphic,
		\item[(5)] $[X_I,X_J]=0$.
\end{enumerate}
		If $(M,g,I,J)$ is compact and log-nondegenerate, then
		\begin{enumerate}
		\item[(6)] $F_+(X_I,X_J)=F_-(X_I,X_J)=0$,
		\item[(7)] $\sigma^{-1}(I X_I,X_J)=\sigma^{-1}(X_I,JX_J)=0$.
	\end{enumerate}
\end{prop}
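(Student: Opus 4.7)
The proof splits by hypothesis strength. Throughout I would use Proposition~\ref{prop:soliton_equiv} (the fields $X_I, X_J$ are Killing and holomorphic for $I, J$ respectively), Lemma~\ref{lm:ham_potentials} (the Hamiltonian description of $X_I \pm X_J$), and the Lee-form identity of Lemma~\ref{lm:lee_identity}. The common mechanism is
\[
\mc L_{X_I}\sigma \;=\; \tfrac{1}{2}[I, \mc L_{X_I}J]g^{-1},
\]
obtained from $\sigma = \tfrac{1}{2}[I,J]g^{-1}$ using $\mc L_{X_I}g = 0 = \mc L_{X_I}I$. Thus (2) reduces to showing $\mc L_{X_I}J$ is of $I$-type $(1,1)$; I would verify this by substituting $2IX_I = \nabla f - \theta_I^{\sharp}$ into the defining equation for $\mc L_{X_I}J$, expanding via the Bismut connection of $(g,I)$, and invoking the torsion identity $d^c_I\omega_I = -d^c_J\omega_J$ together with Lemma~\ref{lm:lee_identity} to collapse the $(2,0)+(0,2)_I$-component. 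The $X_J$ case is symmetric. For (1): $IX_I$ is still real $I$-holomorphic (its projection to $T^{1,0}_I M$ is $i$ times that of $X_I$) but not Killing; the deviation $\mc L_{IX_I}g$ is governed by $\Hess f$, and the trace identity from the soliton PDE produces the cancellation needed for $\mc L_{IX_I}\sigma = 0$.

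For (3)--(5), I take $F_+$ globally defined (the $F_-$ case is symmetric). Lemma~\ref{lm:ham_potentials} gives $\mc L_{X_I+X_J}F_+ = 0$. Expanding $F_+ = -2g(I+J)^{-1}$ and using that $X_I$ preserves $g, I$ and $X_J$ preserves $g, J$, the identity rearranges algebraically to $\mc L_{X_I}J + \mc L_{X_J}I = 0$, which is (3). For (4): Lie-differentiating $I^2 = -\Id$ shows $\mc L_{X_J}I$ anti-commutes with $I$, hence is $I$-type $(2,0)+(0,2)$; by (2), $\mc L_{X_I}J$ is $I$-type $(1,1)$. Since these live in complementary subspaces of $\End(TM)$, (3) forces both to vanish. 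With (4), $X_I$ and $X_J$ preserve the full bihermitian data; Cartan's formula presents $[X_I, X_J]$ as $F_+$-Hamiltonian with potential $X_I \cdot \psi_+ = -X_J\cdot\psi_+$ (using $(X_I+X_J)\cdot\psi_+ = 0$), and the vanishing of $X_J\cdot\psi_+$ established next forces $[X_I, X_J] = 0$.

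Finally, (6)--(7) require compactness and log-nondegeneracy. Pairing the Hamiltonian identity for $X_I + X_J$ with $X_J$ yields, on $U_+$,
\[
F_+(X_I, X_J) \;=\; F_+(X_I + X_J, X_J) \;=\; -X_J\cdot\psi_+,
\]
using $F_+(X_J, X_J) = 0$. By (4)--(5), $X_J$ preserves $F_+$ and commutes with $X_I$, so $d(X_J\cdot\psi_+) = 0$ on $U_+$, making $X_J\cdot\psi_+$ locally constant. Invoking the scalar reduction $e^{\psi_+}F_+^n/n! = e^{-f}dV_g$ from the remark after Lemma~\ref{lm:ham_potentials}, divergence-freeness of the Killing field $X_J$, and Stokes' theorem on compact $M$,
\[
0 \;=\; \int_M \mc L_{X_J}(e^{-f}dV_g) \;=\; \int_M (X_J\cdot\psi_+)e^{-f}dV_g,
\]
forcing the locally constant function $X_J\cdot\psi_+$ to vanish (the complement of the real codimension-two locus $\{\det(I+J)=0\}$ is connected). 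Thus $F_+(X_I, X_J) = 0$; the $F_-$ case is symmetric, proving (6). Property (7) follows from $F_{\pm} = \sigma^{-1}J \mp \sigma^{-1}I$ on $U_+\cap U_-$: sum and difference of (6) give $(\sigma^{-1}J)(X_I, X_J) = (\sigma^{-1}I)(X_I, X_J) = 0$. The hardest step I anticipate is (1): since $IX_I$ is not Killing, the soliton PDE must enter nontrivially to cancel the $\Hess f$ contribution that would otherwise obstruct $\mc L_{IX_I}\sigma = 0$.
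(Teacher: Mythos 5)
Your treatment of items (3), (4), (6), (7) is essentially sound and close to the paper's: (3) is identical; (4) replaces the paper's computation $0=[\mc L_{X_J}I,J]=2J\mc L_{X_I}J$ with the equivalent observation that $\mc L_{X_I}J$ lies in both the $I$-commutant (by (2)) and the $I$-anticommutant (by (3) and $I^2=-\Id$); and in (6) you replace the paper's evaluation of the constant $X_I\cdot\psi_+$ at a maximum of $\psi_+$ (which exists because $\psi_+\to-\infty$ along $Z_+$) by a Stokes integral against $e^{-f}dV_g$ --- both work. However, there are two genuine gaps. First, the heart of the proposition is (1), and you have not produced the cancellation, only asserted that one exists. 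The paper's mechanism is concrete: since $JX_J$ is $J$-holomorphic and $IX_I-JX_J=\tfrac12(\theta_J^\sharp-\theta_I^\sharp)$, one has $\mc L_{IX_I}J=\tfrac12\mc L_{\theta_J^\sharp-\theta_I^\sharp}J$, and the identity $g((\mc L_{\theta_J^\sharp-\theta_I^\sharp}J)V,W)=\rho_I^B(V,[I,J]W)$ converts $2\mc L_{IX_I}\sigma$ into $[I,\mc L_{\frac12(\theta_J^\sharp-\theta_I^\sharp)}J]+[I,J]g^{-1}(\rho_I^B)^{1,1}(\cdot,I\cdot)g^{-1}$, which vanishes precisely by the soliton equation $(\rho^B_I)^{1,1}_I=-\mc L_{IX_I}\omega_I$. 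Without this identity your ``trace identity produces the cancellation'' is a placeholder, not an argument. Relatedly, your plan to prove (2) first by a direct Bismut-connection computation is unsubstantiated and inverts the natural order: (2) follows from (1) almost for free, because $\sigma_I$ is an $I$-holomorphic bivector and $X_I$ is real holomorphic, so $\mc L_{IX_I}\sigma_I=\sqrt{-1}\,\mc L_{X_I}\sigma_I$ and the vanishing of the left-hand side forces that of the right.

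Second, your proofs of (5) and (6) are circular: you deduce (5) from the vanishing of $X_J\cdot\psi_+$ ``established next,'' while your proof that $X_J\cdot\psi_+$ is locally constant invokes $[X_I,X_J]=0$; indeed $d(X_J\cdot\psi_+)=i_{[X_I,X_J]}F_+$, so the constancy you need is essentially equivalent to (5) itself. Moreover (5) is asserted for log-symplectic, not necessarily compact, $M$, whereas your route requires compactness for the integral argument. The paper breaks the circle by proving (5) directly from (4): since $X_I$ preserves $g,I,J$ and hence $\theta_I,\theta_J$, one computes $\mc L_{X_I}(JX_J)=\bigl[X_I,\tfrac12(\nabla f-\theta_J^\sharp)\bigr]=[X_I,IX_I]+\tfrac12[X_I,\theta_I^\sharp-\theta_J^\sharp]=0$, whence $J[X_I,X_J]=0$. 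With (5) supplied independently in this way, your version of (6) goes through.
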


\begin{proof}
	To prove (1) we use Proposition~\ref{prop:soliton_equiv} and following~\cite[\S 9.3.4]{gf-st-20} find
	\begin{equation}
	\begin{split}
	2\mc L_{IX_I}\sigma=[I,\mc L_{\frac{1}{2}(\theta_J^\sharp-\theta_I^\sharp)}J]+[I,J]g^{-1}(\rho_{I}^B)^{1,1}(\cdot,I\cdot)g^{-1}=0
	\end{split}
	\end{equation}
	where at the first step we used that $IX_I$ and $JX_J$ are respectively $I$ and $J$ holomorphic, in particular $\mc L_{IX_I}J=\mc L_{IX_I-JX_J}J=\frac{1}{2}\mc L_{\theta_J^\sharp-\theta_I^\sharp}J$, and the general identity
	\[
	g((\mc L_{\theta_J^\sharp-\theta_I^\sharp}J)V,W)=\rho_{I}^B(V,[I,J]W)
	\]
	at the last step. Identity $\mc L_{JX_J}\sigma=0$ follows similarly.
	
	To prove the identity $\mc L_{X_I}\sigma=0$ of (2), we consider the Lie derivative of the complex-valued tensor $\sigma_I=\sigma-\sqrt{-1}I\sigma$. Recall that on a GK background $\sigma_I$ is always a holomorphic section of $\Lambda^2(T^{1,0}M)$, so in holomorphic coordinates $\{z_i\}$ one has
	\[
	\sigma_I=\sum_{i,j} f_{ij}\frac{\del}{\del z_i}\wedge \frac{\del}{\del z_j},
	\]
	where $f_{ij}\in \mc O$ are local holomorphic functions. 
	Therefore using part (1)
	\[
	\begin{split}
	0&=\mc L_{IX_I}\sigma_I=\sum_{i,j}f_{ij}\left([IX_I,\frac{\del}{\del z_i}]\wedge \frac{\del}{\del z_j}+\frac{\del}{\del z_i}\wedge[IX_I,\frac{\del}{\del z_j}]\right)\\&=\sum_{i,j}f_{ij}\left(I[X_I,\frac{\del}{\del z_i}]\wedge \frac{\del}{\del z_j}+\frac{\del}{\del z_i}\wedge I[X_I,\frac{\del}{\del z_j}]\right)\\&=\sqrt{-1}\sum_{i,j}f_{ij} \left([X_I,\frac{\del}{\del z_i}]\wedge \frac{\del}{\del z_j}+\frac{\del}{\del z_i}\wedge[X_I,\frac{\del}{\del z_j}]\right)=\sqrt{-1}\mc L_{X_I}\sigma_I,
	\end{split}
	\]
	where going from the first line to the second we used the fact that vector fields $\frac{\del}{\del z_i}$, so $\mc L_{\frac{\del}{\del z_i}}I=0$, and from the second line to the third we used that $X_I$ is real holomorphic, so $[X_I,\frac{\del}{\del z_i}]$ are complex vector fields of type $(1,0)$.
	
	To prove part (3) we observe that from Lemma~\ref{lm:ham_potentials} it follows that in $U_+$ one has
	\[
	\mc L_{X_I+X_J}F_+=0.
	\]
	Since $F_+=-2g(I+J)^{-1}$, and $X_I+X_J$ is Killing, it implies that $\mc L_{X_I+X_J}(I+J)^{-1}=0$, hence $\mc L_{X_I+X_J}(I+J)=0$. Finally, as $X_I$ is $I$-holomorphic and $X_J$ is $J$-holomorphic, in $U_+$
	\[
	\mc L_{X_I}J+\mc L_{X_J}I=0.
	\]
	
	To prove (4), using the fact that $X_J$ is Killing, real $J$-holomorphic, and $0=\mc L_{X_J}\sigma=\frac{1}{2}\mc L_{X_J}([I,J]g^{-1})$ we find
	\[
	0=[\mc L_{X_J}I,J]=[J,\mc L_{X_I}J]=2J\mc L_{X_I}J-\mc L_{X_I}J^2=2J\mc L_{X_I}J.
	\]
	Hence $\mc L_{X_I}J=0$ and similarly $\mc L_{X_J}I=0$ in $U_+$. Since $U_+$ is dense in $M$, the same identity holds globally.
	
	For (5), we observe that since $X_I$ is $I$- and $J$-holomorphic, and Killing, it follows that
	\[
	\mc L_{X_I}\theta_I=\mc L_{X_I}\theta_J=0.
	\]
	Therefore
	\[
	\mc L_{X_I}(-JX_J)=\frac{1}{2}[\theta_J^\sharp-\nabla f,X_I]=[X_I,IX_I]+\frac{1}{2}[\theta_J^\sharp-\theta_I^\sharp,X_I]=0.
	\]
	So $J[X_I,X_J]=[X_I,JX_J]=0$, as required.
	
	Finally, to prove (6) and (7) we use that $X_I+X_J$ is $F_+$-Hamiltonian: $F_+(\cdot,X_I+X_J)=d\psi_+$. Since $X_I$ preserves $F_+$ and commutes with $X_J$ by item (5), we conclude that $d\psi_+(X_I)$ is constant. Now, $M$ is compact and $\psi_+=-f+\frac{1}{2}\log\det(I+J)\colon M\to [-\infty,+\infty)$ attains its maximum, so this constant must be zero, since $d\psi_+$ vanishes at a maximum point. Thus $F_+(X_I,X_J)=F_+(X_I,X_I+X_J)=0$. The same argument applies to $F_-$. 
	
	Next, remembering that $F_+=\sigma^{-1}(J-I)$ and $F_-=\sigma^{-1}(I+J)$, we have
	\[
	\begin{split}
	0&=F_+(X_J,X_I)=\sigma^{-1}((J-I)X_J,X_I),\\
	0&=F_-(X_J,X_I)=\sigma^{-1}((J+I)X_J,X_I).
	\end{split}
	\]
	By adding and subtracting these two identities we get the required vanishing. The lemma is proved.
\end{proof}

From Proposition~\ref{pr:XI_XJ_properties} we know that any gradient steady GKRS has two distinguished vector fields $X_I,X_J$, preserving $I,J$ and $g$, such that vector fields $X_I\pm X_J$ are $F_{\pm}$-Hamiltonian having a prescribed behavior near the \emph{boundary} $Z_\pm=M\backslash U_{\pm}$ expressed through equation~\eqref{eq:psi_pm} (or equivalently~\eqref{eq:psi_pm_v2}). It turns out that conversely, gradient steady GKRS can be characterized by the existence of such vector fields.

\begin{prop}\label{prop:gk_soliton_suff}
	Let $(M^{2n},g,I,J)$ be a log-nondegenerate GK manifold, and assume that there are vector fields $X_I$ and $X_J$ such that:
	\begin{enumerate}
		\item $X_I$ and $X_J$ preserve $I,J$ and $g$ and commute;
		\item $X_I\pm X_J$ are $F_\pm$-Hamiltonian with Hamiltonian potentials $\psi_{\pm}\in C^\infty(U_\pm,\R)$;
		\item The volume forms $e^{\psi_{\pm}}\frac{F^n_{\pm}}{n!}$ on $U_{\pm}$ extend to smooth volume forms on $M$ (equivalently, the functions $\psi_{\pm}-\frac{1}{2}\log\det(I\pm J)$ defined on $U_{\pm}$ extend to smooth functions on $M$);
		\item After normalizing $\psi_\pm$ by adding constants we have
		\begin{equation}\label{eq:volume_forms_equality}
		e^{\psi_+}\frac{F_+^n}{n!}=e^{\psi_-}\frac{F_-^n}{n!}.
		\end{equation}
	\end{enumerate}
	Then $(M,g,I,J)$ is a gradient steady GKRS with soliton potential
	\[
	f=\log\left(\frac{dV_g}{F_+^n/n!}\right)-\psi_+=\log\left(\frac{dV_g}{F_-^n/n!}\right)-\psi_-.
	\]
\end{prop}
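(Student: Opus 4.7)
The plan is to reverse-engineer the scalar reduction~\eqref{eq:volume_forms_equality}: I construct the candidate soliton potential $f$, identify the given $X_I, X_J$ with the distinguished vector fields of Proposition~\ref{prop:soliton_equiv}, and then verify the Bismut-Ricci equation of that proposition.

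First I would set
\[
f := \log\frac{dV_g}{F_+^n/n!} - \psi_+,
\]
which by hypothesis (3) extends smoothly to all of $M$, and by hypothesis (4) is independent of the $\pm$ choice. Introduce $\til X_I := \tfrac{1}{2}I(\theta_I^\sharp - \nabla f)$ and $\til X_J := \tfrac{1}{2}J(\theta_J^\sharp - \nabla f)$. By Lemma~\ref{lm:ham_potentials} applied to this $f$, the fields $\til X_I \pm \til X_J$ are $F_\pm$-Hamiltonian on $U_\pm$ with potentials $\til\psi_\pm := \tfrac{1}{2}\log\det(I\pm J) - f$. Rewriting via~\eqref{eq:psi_pm_v2} gives $e^{\til\psi_\pm}F_\pm^n/n! = e^{-f}dV_g = e^{\psi_\pm}F_\pm^n/n!$ on $U_\pm$, so $\til\psi_\pm = \psi_\pm$. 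Uniqueness of Hamiltonian vector fields on the symplectic manifolds $(U_\pm, F_\pm)$, together with smoothness on $M$ and density of $U_\pm$, then forces $\til X_I = X_I$ and $\til X_J = X_J$ on all of $M$.

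Having matched the vector fields, I would invoke Proposition~\ref{prop:soliton_equiv}: to conclude that $(g,I,J)$ is a GKRS with potential $f$ it suffices to verify condition~(2), and the Killing and real $I$-holomorphic properties of $X_I$ are immediate from hypothesis~(1). The only remaining content is the curvature identity
\[
(\rho^B_I)^{1,1}_I + \mc L_{IX_I}\omega_I = 0.
\]
The strategy here is to express $(\rho^B_I)^{1,1}_I$ on a log-nondegenerate GK background as a local $dd^c_I$-potential involving the volume-form ratio $dV_g/(F_+^n/n!)$ and a Lee-form correction (a formula extractable from the GK Bismut-Ricci calculus, in the spirit of~\cite{gf-st-20, st-19-soliton}), and to rewrite $\mc L_{IX_I}\omega_I$ via Cartan's formula together with the explicit expression $X_I = \tfrac{1}{2}I(\theta_I^\sharp - \nabla f)$. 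Then~\eqref{eq:volume_forms_equality}, the Hamiltonian relation $i_{X_I+X_J}F_+=-d\psi_+$, and the $X_I$-invariance of $(I,J,g)$ should combine to produce the required equality.

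The main obstacle I anticipate is this final curvature calculation: precisely identifying a $dd^c_I$-potential for $(\rho^B_I)^{1,1}_I$ on a log-nondegenerate GK background in terms of $F_+$ and the Lee form, and verifying that the symmetric identity~\eqref{eq:volume_forms_equality} extracts the $I$-equation rather than merely the symmetrized combination of the $I$- and $J$-equations. Log-nondegeneracy is essential here, since it is what permits localizing $(\rho^B_I)^{1,1}_I$ to a $dd^c_I$-scalar potential on the dense open set $U_+$ on which $F_+$ is genuinely symplectic.
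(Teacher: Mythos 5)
Your first half is correct and coincides with the paper's argument: defining $f$ from the volume-form ratio, applying Lemma~\ref{lm:ham_potentials} to identify $X_I\pm X_J$ with $-F_\pm^{-1}(d\psi_\pm)$, and hence recovering $X_I=\tfrac{1}{2}I(\theta_I^\sharp-\nabla f)$, $X_J=\tfrac{1}{2}J(\theta_J^\sharp-\nabla f)$, so that the Killing and holomorphicity requirements of Proposition~\ref{prop:soliton_equiv}(2) follow from hypothesis (1). The gap is exactly where you flag it: the curvature identity $(\rho^B_I)^{1,1}_I=-\mc L_{IX_I}\omega_I$ is the entire remaining content, and the route you propose for it is not the one that works. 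You look for a $dd^c_I$-potential of $(\rho^B_I)^{1,1}_I$ built from the ratio $dV_g/(F_+^n/n!)$ plus Lee-form corrections. The identity actually needed is \cite[Prop.\,3.9]{ap-st-17},
\[
\rho^B_I=-\tfrac{1}{2}\,dJd\,\Phi,\qquad \Phi=\log\frac{F_+^n}{F_-^n},
\]
which is structurally different in two ways: the potential is the log-ratio of the \emph{two symplectic} volume forms (not the Riemannian volume over one of them), and the operator is $dJd$ — it twists by the \emph{other} complex structure $J$, not by $I$. This built-in asymmetry is precisely what resolves your worry about extracting the $I$-equation rather than a symmetrized combination.

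With that formula in hand the computation closes as follows: hypothesis (4) gives $\Phi=\psi_--\psi_+$, the Hamiltonian relations give $d\psi_\pm=-i_{X_I\pm X_J}F_\pm$, and then
\[
\rho^B_I=\tfrac{1}{2}dJ\bigl(i_{X_I-X_J}F_--i_{X_I+X_J}F_+\bigr)
=-d\,i_{IX_I}F_+-d\,i_{X_I-X_J}\sigma^{-1}=-\mc L_{IX_I}F_+,
\]
using $F_+(JX,Y)+F_+(X,IY)=0$, $F_++F_-=2\sigma^{-1}J$, and that $X_I-X_J$ preserves $\sigma^{-1}$ (Proposition~\ref{pr:XI_XJ_properties}); taking the $(1,1)_I$-part and using $(F_+)^{1,1}_I=\omega_I$ with $IX_I$ being $I$-holomorphic finishes the proof. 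Without citing or reproving the $\rho^B_I=-\tfrac{1}{2}dJd\Phi$ identity, your proposal does not constitute a proof: everything downstream of ``the strategy here is to express...'' is an unverified plan pointed at a formula of the wrong shape.
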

\begin{proof}
	We will work on an open dense subset $U_+\cap U_-\subset M$, so that both $F_+, F_-$ as well as $\sigma^{-1}$ are well-defined. Then from the definition of $f$, we have
	\[
	\psi_{\pm}+f=\frac{1}{2}\log\det(I\pm J).
	\]
	It follows from~\eqref{eq:lee_identity} that
	\[
	X_I\pm X_J=-F_{\pm}^{-1}(d\psi_{\pm})=\frac{1}{2}(I\pm J)\left(\frac{1}{2}\nabla \log\det(I\pm J)-\nabla f\right)=\frac{1}{2}I(\theta_I^\sharp-\nabla f)\pm \frac{1}{2}J(\theta_J^\sharp-\nabla f),
	\]
	hence
	\[
	X_I=\frac{1}{2}I(\theta_I^\sharp-\nabla f),\quad X_J=\frac{1}{2}J(\theta_J^\sharp-\nabla f).
	\]
	Thus vector field $X_I=\frac{1}{2}I(\theta_I^\sharp-\nabla f)$ is Killing and $I$-holomorphic, verifying part of the statement (2) of Proposition~\ref{prop:soliton_equiv}. To conclude that $(M,g,I,J)$ is a gradient steady GKRS with the soliton potential $f$, it remains to check that
	\[
	(\rho_{I}^B)^{1,1}=-\mc L_{IX_I}\omega_I.
	\]
	To this end we use the identity \cite[Prop.\,3.9]{ap-st-17}
	\[
	\rho_{I}^B=-\frac{1}{2}dJd\Phi,\quad \Phi:=\log\frac{F_+^n}{F_-^n}.
	\]
	Specifically, using that and the assumption~\eqref{eq:volume_forms_equality}, which implies $\Phi=\psi_--\psi_+$, we compute
	\[
	\begin{split}
	\rho_{I}^B&=\frac{1}{2}dJd(\psi_+-\psi_-)\\
	&=
	\frac{1}{2}dJ(i_{X_I-X_J}F_--i_{X_I+X_J}F_+)\\
	&=
	\frac{1}{2}dJ(i_{X_I}(F_--F_+)-i_{X_J}(F_++F_-))\\&=
	-dJ(i_{X_I}F_+)+\frac{1}{2}dJ i_{X_I-X_J}(F_++F_-)\\
	&=-di_{IX_I}F_+-di_{X_I-X_J}\sigma^{-1}\\
	&=-\mc L_{IX_I}F_+,
	\end{split}
	\]
	where from the first line to the second we used the definition of the Hamiltonian potentials $\psi_{\pm}$, from the fourth line to the fifth we used the identities $F_+(JX,Y)+F_+(X,IY)=0$ and $F_++F_-=2\sigma^{-1}J$, and the final equality holds since $X_I-X_J$ preserves the symplectic form $\sigma^{-1}=2g[I,J]^{-1}$ by Proposition \ref{pr:XI_XJ_properties}.
	
	It remains to observe that $(F_+)^{1,1}_{I}=\omega_I$, so that 
	\[
	\mc L_{IX_I} \omega_I=(\mc L_{IX_I}F_+)^{1,1}_I=-(\rho^B_I)^{1,1}_I 
	\]
	since $IX_I$ is $I$-holomorphic.
\end{proof}
\begin{rmk} The soliton potential $f$ is defined up to an additive constant. We can eliminate this freedom by an additional normalization
\[ \int_M e^{\psi_+} \frac{F_+^{n}}{n!} = \int_M e^{\psi_-} \frac{F_-^{n}}{n!} = \int_M e^{-f}dV_g = {\rm Vol}(M, g_0), \]
where $g_0$ is some background Riemannian metric.
\end{rmk}

\section{The weighted \texorpdfstring{$\pmb{J}$}--functional}\label{sec:aubin}
\subsection{The weighted \texorpdfstring{$\pmb{J}$}--functional in K\"ahler geometry}

Here we recall the definition of the well-known (weighted) $\pmb{J}$-functional,  in the case when $m_0:=(M, J, g_0, \omega_0)$ is a fixed K\"ahler manifold, and $X$ is a given Killing vector field with zeroes. In this case, we let $G$ be the compact torus in the isometry group of $g_0$ generated by the flow of $X$, and  $\mc M_{m_0}^G$ the corresponding orbit of $G$-invariant K\"ahler metrics in the K\"ahler class of $\omega_0$, see Example~\ref{ex:Kahler-class}. We think of $\mc M_{m_0}^G$ as a formal Fr\'echet manifold, with tangent space at $(g, \omega)\in \mc M_{m_0}^G$ identified with $C^{\infty}(M, \R)/\R$. Aubin
~\cite{Aub} introduced a functional  $\pmb{J}_{m_0}: {\mc M}_{m_0} \to \R, \, \pmb{J}(m_0)=0$, through its derivative 
\[ \pmb{\mu} (\fp) := \int_M \fp \left(\frac{\omega_0^n}{n!} - \frac{\omega^{n}}{n!}\right), \]
seen as a closed $1$-form on the space $\mc M_{m_0}$.  This space is contractible, thus $\pmb{\mu}$ is exact, yielding the functional $\pmb{J}$.  A  suitable $X$-weighted version of $\pmb{J}_{m_0}$ is introduced in \cite{TZ1, HL} in order to account for the theory of KRS, as follows: for any $(g, \omega) \in \mc M_{m_0}$, we let $\psi_{\omega}$ denote the unique Killing potential of $X$ (i.e. smooth function such that $X=J {\rm grad}_g \psi_{\omega}$) satisfying  $\int_M \psi_{\omega} \frac{\omega^n}{n!} =c$ for some fixed constant $c$, and let $\pmb{J}_{X, m_0}$ be the primitive of the closed $1$-form
\[  \pmb{\mu}_{X} (\fp) := \int_M \fp \left(e^{\psi_{\omega_0}}\frac{\omega_0^n}{n!} - e^{\psi_{\omega}}\frac{\omega^{n}}{n!}\right), \]
on the space $\mc M_{m_0}^G$. Notice that the normalization of $\psi_{\omega}$  is equivalent to require that  $\psi_{\omega}(M)$ is a fixed interval (depending on $c$). It then follows by the Duistermaat-Heckman theorem that the normalization constant $c$ can be chosen so that  $\int_M e^{\psi_{\omega}}\frac{\omega^n}{n!}=\int_M e^{\psi_{\omega_0}} \frac{\omega_0^n}{n!} = {\rm Vol}(M, \omega_0)$, see \cite{TZ1}.

\subsection{The weighted \texorpdfstring{$\pmb{J}$}--functional in log-nondegenerate GK geometry}

Consider a compact log-nondegenerate GK manifold $m_0=(M,g_0,I_0,J)$. The goal of this section is to define a functional on the universal cover $\mc M^G$ of the orbit of the flow construction $\mc M_{m_0}^G$,  reminiscent to  the (weighted) $\pmb{J}$-functional in the K\"ahler setting introduced above. It will depend on a choice of two distinguished commuting, quasiperiodic vector fields which span a torus $G$, and representing the prospective soliton vector fields $X_I$ and $X_J$ of Proposition~\ref{prop:soliton_equiv}. Specifically, for a log-nondegenerate GK manifold $(M,g,I,J)$ we fix two vector fields $X_I,X_J\in \Gamma(TM)$ such that:

\begin{enumerate}
	\item $X_I$ and $X_J$ preserve $I,J$ and $g$ and commute.
	\item $X_I\pm X_J$ are $F_{\pm}$-Hamiltonian with Hamiltonian potentials $\psi_{\pm}\in C^\infty(U_{\pm},\R)$:
	\[
	i_{X_I\pm X_J}F_{\pm}=-d\psi_{\pm}.
	\]
	\item The volume forms $e^{\psi_{\pm}}\frac{F^n_{\pm}}{n!}$ extend to smooth volume forms on $M$.
	\item $F_{\pm}$-Hamiltonian potentials $\psi_{\pm}$ are normalized in such a way that
	\[
	\int_M e^{\psi_{\pm}}\frac{F_{\pm}^{n}}{n!}={\rm Vol}(M, g_0).
	\]
\end{enumerate}
Let $G\subset \mathrm{Aut}(M,g,I,J)$ be the Lie subgroup generated by the vector fields $X_I$ and $X_J$. Since the vector fields $X_I$ and $X_J$ commute, and $G$ is a subgroup of the isometry group, which is compact, $G\subset \mathrm{Aut}(M,g,I,J)$ is a torus. Denote by $\mc H_\sigma^G$ the infinite-dimensional Lie group with the Lie algebra $C^\infty(M,\R)^G/\R$ underlying the $G$-equivariant flow construction. 

We next introduce a functional on the universal cover of $\mc M^G$
\[
\pmb J_{X_I,X_J,m_0}\colon \til{\mc M^G_{m_0}}\to \R
\]
through its variation along the flow construction, such that its critical points are given by gradient steady GKRS.  In the case when $(M,g,I,J)$ is nondegenerate and $X_I=X_J=0$, this functional  will reduce to the one that was introduced by the first two authors in~\cite[\S 4]{ap-st-17}. Our construction builds upon this paper, taking careful consideration of the degeneracy loci of $\gs$ (note that our definition of $\sigma$ differs by a factor of 2 from~\cite{ap-st-17}).

Before we introduce the functional, let us make an observation about the $G$-equivariant flow construction. 
\begin{lemma} \label{l:flowinvarance}
	For any $\fp\in C^\infty(M,\R)^G$ and the corresponding $\gs$-Hamiltonian vector field $X_{\fp}$, we have
	\[
	[X_{\fp},X_I]=[X_{\fp},X_J]=0.
	\]
	In particular, along the Hamiltonian flow construction generated by a family $\fp_t\in C^\infty(M,\R)^G$ vector fields $X_I$ and $X_J$ remain holomorphic and Killing.
\end{lemma}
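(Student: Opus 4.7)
The plan is to reduce both assertions to two basic facts already in place: first, $X_I$ and $X_J$ preserve $\sigma$ (immediate from condition (1) of the setup in this section, since $\sigma = \tfrac{1}{2}[I,J]g^{-1}$; alternatively Proposition~\ref{pr:XI_XJ_properties}(2)); second, $X_I \cdot \fp = X_J \cdot \fp = 0$ by the $G$-invariance of $\fp$.

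For the commutation statement, I would treat $X_\fp = -\sigma(d\fp)$ as the contraction of the bivector $\sigma$ against an exact one-form and apply the Leibniz rule for the Lie derivative:
\begin{align*}
[X_I, X_\fp] \;=\; \mc L_{X_I}\bigl(-\sigma(d\fp)\bigr) \;=\; -(\mc L_{X_I}\sigma)(d\fp) \;-\; \sigma\bigl(d(X_I\cdot\fp)\bigr) \;=\; 0,
\end{align*}
as each term vanishes by one of the two facts above. The argument for $[X_J, X_\fp]=0$ is identical.

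For the ``In particular'' part, I would run a linear-ODE argument for $\mc L_{X_I} I_t$ and $\mc L_{X_I} g_t$. From $\dt I_t = \mc L_{X_{\fp_t}} I_t$ in~\eqref{f:flowconst}, together with the identity $[\mc L_X, \mc L_Y] = \mc L_{[X,Y]}$ and the commutation just proved,
\begin{align*}
\dt\bigl(\mc L_{X_I} I_t\bigr) \;=\; \mc L_{X_I}\mc L_{X_{\fp_t}} I_t \;=\; \mc L_{X_{\fp_t}}\mc L_{X_I} I_t + \mc L_{[X_I,X_{\fp_t}]} I_t \;=\; \mc L_{X_{\fp_t}}\mc L_{X_I} I_t.
\end{align*}
This is a homogeneous linear evolution equation; since $\mc L_{X_I} I_0 = 0$ by hypothesis, uniqueness forces $\mc L_{X_I} I_t \equiv 0$ throughout the flow. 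For the metric I use $\dt g_t = (dd^c_{I_t}\fp_t \, J)^{\mathrm{sym}}$: the tensor inside the parentheses is built from $I_t$, $J$ and $\fp_t$, each of which $X_I$ now preserves ($I_t$ by what we have just shown, $J$ because it is constant along the flow, and $\fp_t$ by $G$-invariance), hence $\mc L_{X_I}(\dt g_t) = 0$; combined with $\mc L_{X_I} g_0 = 0$ this gives $\mc L_{X_I} g_t \equiv 0$. The same reasoning applies verbatim to $X_J$.

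I do not expect any serious obstacle: the entire argument is Lie-derivative bookkeeping, and the only substantive input, namely the invariance of $\sigma$ under $X_I$ and $X_J$, has already been established.
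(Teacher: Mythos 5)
Your proposal is correct and follows essentially the same route as the paper: the commutation identity is obtained by the Leibniz rule from $\mc L_{X_I}\sigma=0$ and $X_I\cdot\fp=0$ (exactly the paper's computation $[X_{\fp},X_I]=\mc L_{X_I}(\sigma(d\fp))=\sigma(d\la\fp,X_I\ra)=0$), and the ``in particular'' part then follows because $I_t$ evolves by the flow of $X_{\fp_t}$ and the right-hand side of the metric equation is annihilated by $\mc L_{X_I},\mc L_{X_J}$. Your transport-equation formulation of the last step is just a more explicit rendering of the paper's observation that the flow of $X_{\fp_t}$ commutes with that of $X_I$.
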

\begin{proof}
We have $[X_{\fp},X_I]=\mc L_{X_I}(\sigma(d\fp))=\sigma(d\la \fp, X_I\ra)=0$, where in the second identity we used the fact that $X_I$ preserves $\sigma$, and the final term vanishes, since the potential $\fp$ is invariant under the action of the group $G$, which is generated by the infinitesimal flows of $X_I$ and $X_J$.

Thus along the flow construction generated by $\fp_t\in C^\infty(M,\R)^G$, the vector fields $X_I$ and $X_J$ remain jointly $I$ and $J$ holomorphic, since $J$ does not change, and $I$ flows by a diffeomorphism generated by $X_{\fp_t}$. Finally, since $\dt g=-(dd^c_I\fp_t J)^{\mathrm{sym}}$, and the term on the right-hand side is annihilated by $\mc L_{X_I}$ and $\mc L_{X_J}$, the vector fields $X_I$ and $X_J$ also remain Killing.
\end{proof}

Next we compute the evolution of the $F_{\pm}$-Hamiltonian potentials $\psi_{\pm}$ along the flow.

\begin{lemma}\label{lm:flow_volume_variation}
	Along the flow construction generated by $\fp\in C^\infty(M,\R)^G$ we have
	\[
	\left. \dt \right|_{t=0}{\psi_{\pm}}=\mp\langle Id\fp, (X_I\pm X_J)\rangle,
	\]
	\[
	\left. \dt \right|_{t=0}\left(
		e^{\psi_{\pm}}\frac{F_{\pm}^{n}}{n!}
		\right)
		=\mp d\left(e^{\psi_\pm} Id\fp\wedge \frac{F_\pm^{n-1}}{(n-1)!}\right).
	\]
\end{lemma}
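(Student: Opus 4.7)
The plan is to establish the two identities in sequence, deriving the formula for $\dt\psi_\pm$ first and then deducing the variation of the volume forms by a direct computation.

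For the first identity, I would differentiate the defining Hamiltonian relation $i_{X_I\pm X_J}F_\pm=-d\psi_\pm$ in $t$. By Lemma~\ref{l:flowinvarance} the vector fields $X_I,X_J$ are independent of $t$ along the flow, so differentiation yields $i_{X_I\pm X_J}\dt F_\pm=-d\dt\psi_\pm$. Substituting $\dt F_\pm=\mp dd^c_I\fp$ from \eqref{eq:flow_evolution} and applying Cartan's magic formula
\[
i_{X_I\pm X_J}dd^c_I\fp=\mc L_{X_I\pm X_J}(d^c_I\fp)-d\bigl(i_{X_I\pm X_J}d^c_I\fp\bigr),
\]
one sees that the Lie derivative term vanishes: $X_I$ and $X_J$ preserve $I$ (hence commute with $d^c_I$), and $\fp\in C^\infty(M,\R)^G$ is annihilated by $X_I\pm X_J$. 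This gives $d\dt\psi_\pm=\mp d\la Id\fp,X_I\pm X_J\ra$, so $\dt\psi_\pm=\mp\la Id\fp,X_I\pm X_J\ra+c_\pm(t)$ for some $t$-dependent constant.

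The constant $c_\pm(t)$ is pinned to zero by the normalization condition $\int_M e^{\psi_\pm}F_\pm^n/n!=\mathrm{Vol}(M,g_0)$. Differentiating this in $t$ produces an identity involving $c_\pm(t)$, $\int_M e^{\psi_\pm}\la Id\fp,X_I\pm X_J\ra F_\pm^n/n!$, and $\int_M e^{\psi_\pm}F_\pm^{n-1}/(n-1)!\wedge dd^c_I\fp$; an integration by parts on the last integral (using closedness of $F_\pm$ and the Hamiltonian relation $d\psi_\pm=-i_{X_I\pm X_J}F_\pm$), combined with the standard top-form identity $\alpha\wedge i_X\Omega=\la\alpha,X\ra\Omega$, forces the remaining two integrals to cancel and hence $c_\pm(t)=0$.

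Given the first identity, the volume-form identity follows from the computation
\[
\dt\!\left(e^{\psi_\pm}\frac{F_\pm^n}{n!}\right)=e^{\psi_\pm}\dt\psi_\pm\cdot\frac{F_\pm^n}{n!}+e^{\psi_\pm}\frac{F_\pm^{n-1}}{(n-1)!}\wedge\dt F_\pm.
\]
Expanding the claimed exact primitive $\mp d\bigl(e^{\psi_\pm}Id\fp\wedge F_\pm^{n-1}/(n-1)!\bigr)$ using $dF_\pm=0$ and $d\psi_\pm=-i_{X_I\pm X_J}F_\pm$, and invoking $i_X(F_\pm^n/n!)=(i_XF_\pm)\wedge F_\pm^{n-1}/(n-1)!$ together with $\alpha\wedge i_X\Omega=\la\alpha,X\ra\Omega$ for a top form $\Omega$, I would match the two sides term by term. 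The only real subtlety is careful sign bookkeeping through the $\pm$ cases and the anticommutation of wedge products of $1$-forms; the statement is purely algebraic once the formula for $\dt\psi_\pm$ is in hand.
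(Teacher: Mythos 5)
Your proposal is correct and follows essentially the same route as the paper: differentiate the Hamiltonian relation, kill the Lie-derivative term in Cartan's formula using $G$-invariance of $\fp$ and holomorphy of $X_I,X_J$, and then fix the additive constant via the volume normalization. The paper merely packages your two final steps in the opposite order --- it first writes the volume-form variation as the exact form $\mp d\bigl(e^{\psi_\pm} Id\fp\wedge F_\pm^{n-1}/(n-1)!\bigr)$ and reads off preservation of the normalization from that, which is the same integration-by-parts computation you use to force $c_\pm(t)=0$.
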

\begin{proof}
	We provide the argument for $\psi_+$, with the case of $\psi_-$ being analogous. For all times along the flow construction, the function $\psi_+$ satisfies
	\[
	d\psi_+=-i_{X_I+X_J}F_+.
	\]
	Taking first order variation, we have
	\[
	\left. \dt \right|_{t=0}d{\psi_+}=i_{X_I+X_J}dId\fp=-d(i_{X_I+X_J}Id\fp),
	\]
	where in the second identity we used that $\mc L_{X_I+X_J}(Id\fp)=0$ by Lemma \ref{l:flowinvarance}.
	
	It remains to check that with the variation of the Hamiltonian potential given by $\dt \big|_{t=0}\psi_+=-\langle Id\fp, (X_I+ X_J)\rangle$, the variation of the total volume $\int_M e^{\psi_+} \frac{F_+^{n}}{n!}$ vanishes. Indeed, it is easy to see that the variation of the volume form is exact:
	\[
	\begin{split}
	\left. \dt \right|_{t=0}{\left(e^{\psi_+} \frac{F_+^{n}}{n!}\right)}&=-\langle Id{\fp}, (X_I+X_J)\rangle e^{\psi_+} \frac{F_+^{n}}{n!}-e^{\psi_+} dId\fp\wedge  \frac{F_+^{n-1}}{(n-1)!}\\
	&=-e^{\psi_+}(d\psi_+\wedge Id\fp+dId{\fp})\wedge \frac{F_+^{n-1}}{(n-1)!}=-d\left(e^{\psi_+}Id{\fp}\wedge \frac{F_+^{n-1}}{(n-1)!}\right),
	\end{split}
	\]
	where we used that for any 1-form $\alpha$, \[
	n\cfrac{d\psi_+\wedge \alpha\wedge F_+^{n-1}}{F_+^n}
	=\langle \alpha, (X_I+X_J)\rangle.\]
\end{proof}

\begin{defn}
	Let $m=(M,g,I,J)$ be a compact log-nondegenerate GK manifold, equipped with two vector fields $X_I$ and $X_J$ satisfying the above assumptions. Define $\pmb\nu_{X_I, X_J}$ to be a 1-form on $\mc M^G$ given at a point $m$ and $\fp\in C^\infty(M,\R)^G/\R\simeq T_m\mc M^G$ by
	\[
	\pmb\nu_{X_I, X_J}(\fp):=\int_M \fp \left(
		e^{\psi_+}\frac{F_+^n}{n!}-e^{\psi_-}\frac{F_-^n}{n!}
	\right), 
	\]
	where $\psi_{\pm}$ are the $F_{\pm}$-Hamiltonians of $X_I \pm X_J$, normalized by
	\[ \int_M e^{\psi_+}\frac{F_+^n}{n!}= \int_M e^{\psi_-}\frac{F_-^n}{n!}={\rm Vol}(M, g_0).\]
\end{defn}

We are going to prove that the 1-form $\pmb\nu:=\pmb\nu_{X_I, X_J}$ is closed, so that we can integrate it along paths in $\mc M^G$ and obtain a functional $\pmb{J}:=\pmb J_{X_I, X_J}$ on the universal cover $\pi\colon \til{\mc M^G}\to \mc M^G$, such that $d\pmb J_{X_I, X_J}=\pi^*\pmb\nu_{X_I, X_J}$.

\begin{prop}\label{prop:closed_1_form}
	The 1-form $\pmb\nu_{X_I, X_J}$ on $\mc M^G$ is closed.
\end{prop}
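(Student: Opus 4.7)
The plan is to verify $d\pmb\nu = 0$ by Cartan's formula at an arbitrary base point $m \in \mc M^G$. I extend tangent vectors $\fp, \psi \in C^\infty(M,\R)^G/\R \simeq T_m \mc M^G$ to vector fields $\tilde\fp, \tilde\psi$ on $\mc M^G$ via the canonical identification, so that their integral curves are precisely the flow constructions with constant potentials $\fp, \psi$. Cartan's formula reduces closedness to
\[
\tilde\fp(\pmb\nu(\tilde\psi)) - \tilde\psi(\pmb\nu(\tilde\fp)) = \pmb\nu([\tilde\fp, \tilde\psi]),
\]
where the right-hand side equals $-\pmb\nu(\{\fp, \psi\}_\sigma)$ via the anti-Lie-algebra isomorphism $\mf h_\sigma^G \to \Vect(\mc M^G)$ induced from the right action of $\mc H_\sigma^G$ (cf.\ Section~\ref{sec:flow_construction}).

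\textbf{The core computation.} Lemma~\ref{lm:flow_volume_variation} expresses $\dt\eta$, with $\eta := e^{\psi_+}F_+^n/n! - e^{\psi_-}F_-^n/n!$, as an exterior derivative of forms only defined on $U_\pm$. The key observation is that the vector fields
\[
Y_\pm := -F_\pm^{-1}(Id\fp) = \tfrac{1}{2}(I\pm J)I\nabla\fp
\]
extend smoothly across $Z_\pm$ (since $(I\pm J)$ is globally smooth, regardless of invertibility) and satisfy $i_{Y_\pm}F_\pm = -Id\fp$ on $U_\pm$. This rewrites Lemma~\ref{lm:flow_volume_variation} as the \emph{globally defined} Lie-derivative identity $\dt(e^{\psi_\pm}F_\pm^n/n!) = \pm\mc L_{Y_\pm}(e^{\psi_\pm}F_\pm^n/n!)$. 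Integration by parts on the closed manifold $M$ via $\int_M\psi\,\mc L_Y\omega = -\int_M Y(\psi)\,\omega$ then gives
\[
\tilde\fp(\pmb\nu(\tilde\psi)) = -\int_M Y_+(\psi)\,e^{\psi_+}\tfrac{F_+^n}{n!} - \int_M Y_-(\psi)\,e^{\psi_-}\tfrac{F_-^n}{n!}.
\]
Expanding $Y_\pm(\psi) = -\tfrac{1}{2}g(\nabla\fp,\nabla\psi) \pm \tfrac{1}{2}g(\nabla\psi,JI\nabla\fp)$, the symmetric-in-$(\fp,\psi)$ piece drops out of $\tilde\fp\pmb\nu(\tilde\psi) - \tilde\psi\pmb\nu(\tilde\fp)$; using the $g$-skew-adjointness of $I, J$ together with $\sigma = \tfrac{1}{2}[I,J]g^{-1}$, the bracketed antisymmetric residue $g(\nabla\psi, JI\nabla\fp) - g(\nabla\fp, JI\nabla\psi) = -g(\nabla\psi, [I,J]\nabla\fp) = 2\{\fp,\psi\}_\sigma$ collapses the expression to $-\pmb\nu(\{\fp,\psi\}_\sigma)$, matching $\pmb\nu([\tilde\fp,\tilde\psi])$ exactly. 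Hence $d\pmb\nu(\tilde\fp,\tilde\psi) = 0$.

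\textbf{Main obstacle.} The principal technical issue is that $F_\pm^{-1}$, and hence the primitive forms $e^{\psi_\pm}Id\fp\wedge F_\pm^{n-1}/(n-1)!$ produced by Lemma~\ref{lm:flow_volume_variation}, are singular along the analytic loci $Z_\pm = M\setminus U_\pm$; a naive integration by parts on $U_\pm$ would require controlling distributional boundary terms at $Z_\pm$. This is sidestepped by the observation that both $Y_\pm$ and the volume forms $e^{\psi_\pm}F_\pm^n/n!$ extend smoothly to all of $M$, so recasting everything as Lie derivatives of smooth top forms keeps the entire calculation in the smooth category. A secondary subtlety is the sign bookkeeping between the Poisson bracket on $\mf h_\sigma^G$ and the Lie bracket of fundamental vector fields on $\mc M^G$, settled by the anti-isomorphism recalled in Section~\ref{sec:flow_construction}.
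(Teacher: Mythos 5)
Your proof is correct and follows essentially the same route as the paper's: the paper likewise starts from Lemma~\ref{lm:flow_volume_variation}, integrates by parts using that $e^{\psi_{\pm}}F_{\pm}^{n-1}=\tfrac{1}{n}i_{F_{\pm}^{-1}}\bigl(e^{\psi_{\pm}}F_{\pm}^{n}\bigr)$ is globally smooth (the same point as your observation that $Y_{\pm}=-F_{\pm}^{-1}(Id\fp)=\tfrac{1}{2}(I\pm J)I\nabla\fp$ extends across $Z_{\pm}$), and then splits the resulting pairing into a part symmetric in $(\fp_1,\fp_2)$ plus the term $\tfrac{1}{2}\{\fp_1,\fp_2\}_{\sigma}\,\bigl(e^{\psi_+}\tfrac{F_+^n}{n!}-e^{\psi_-}\tfrac{F_-^n}{n!}\bigr)$, which cancels against $\pmb\nu([\fp_1,\fp_2])$. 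The only discrepancy is a pair of mutually compensating sign conventions: with the paper's convention $\sigma(d\fp,d\psi)=d\psi(\sigma(d\fp))$ one gets $[\tilde\fp,\tilde\psi]=+\{\fp,\psi\}_{\sigma}$ and an antisymmetric residue $g(\nabla\psi,JI\nabla\fp)-g(\nabla\fp,JI\nabla\psi)=-2\{\fp,\psi\}_{\sigma}$, opposite to both of your signs, so the final cancellation is unaffected.
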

\begin{proof}
	The argument is an extension of~\cite[Lemma\,4.4]{ap-st-17} in the case when $X_I=X_J=0$. Let $\fp_1,\fp_2\in C^\infty(M,\R)^G$ be two flow potentials inducing the $\sigma$-Hamiltonian vector fields $X_{\fp_1}$, $X_{\fp_2}$ on $M$. By pushing them forward via the action of $\mc H_{\sigma}^G\subset \mathrm{Diff}(M)$, we obtain 
	right-invariant vector fields on $\mc H_\sigma^G$.
	By abuse of notation we denote the corresponding fundamental vector fields on $\mc M^G$ by $\fp_1$ and $\fp_2$.
	
	To prove that $\pmb\nu=\pmb\nu_{X_I, X_J}$ is closed we want to show that
	\[
	d\pmb\nu(\fp_1,\fp_2)=\fp_1\cdot \pmb\nu(\fp_2)-\fp_2\cdot \pmb\nu(\fp_1)-\pmb\nu([\fp_1,\fp_2])=0.
	\]
	Denote by $\dt$ the infinitesimal variation under the flow construction induced by $\fp_1$. Then with the use of Lemma~\ref{lm:flow_volume_variation} we directly compute:
	\[
	\begin{split}
	\fp_1 \cdot \pmb\nu(\fp_2)=&\
	\int_M \fp_2 \dt \left(
	e^{\psi_+}\frac{F_+^n}{n!}-e^{\psi_-}\frac{F_-^n}{n!}
	\right)\\
	=&\
	-\int_M \fp_2 d\left(e^{\psi_+}Id{\fp_1}\wedge \frac{F_+^{n-1}}{(n-1)!}
	+e^{\psi_-}Id\fp_1\wedge\frac{F_-^{n-1}}{(n-1)!}
	\right)\\
	=&\ \int_M d\fp_2\wedge Id\fp_1\wedge \left(
		e^{\psi_+}\frac{F_+^{n-1}}{(n-1)!}+
		e^{\psi_-}\frac{F_-^{n-1}}{(n-1)!}
		\right)\\
	=&\ \frac{1}{2}\int_M \la (I+J)d\fp_2, I d\fp_1\ra e^{\psi_+}\frac{F_+^n}{n!}+\frac{1}{2}\int_M \la (I-J)d\fp_2, I d\fp_1\ra e^{\psi_-}\frac{F_-^n}{n!}\\
	=&\ 
	\frac{1}{4}\int_M\la(I+J)d\fp_1,(I+J)d\fp_2\ra e^{\psi_+}\frac{F_+^n}{n!}+
	\frac{1}{4}\int_M\la(I-J)d\fp_1,(I-J)d\fp_2\ra e^{\psi_-}\frac{F_-^n}{n!}\\
	&+\frac{1}{2}\int_M \{\fp_1,\fp_2\}_{\sigma}\left(
	e^{\psi_+}\frac{F_+^n}{n!}-e^{\psi_-}\frac{F_-^n}{n!}
	\right).
	\end{split}
	\]
	
	Where, while integrating by parts, we used that $e^{\psi_{\pm}}F_\pm^n$ extends to smooth volume form on $M$, which implies that $e^{\psi_{\pm}}F_{\pm}^{n-1}=\frac{1}{n}i_{F_{\pm}^{-1}}\left(e^{\psi_{\pm}}F_{\pm}^{n}\right)$ is a smooth $(n-2)$ form globally defined on $M$. The first two terms of the final expression are symmetric with respect to $\fp_1$ and $\fp_2$ and will cancel out with the corresponding terms in $\fp_2\cdot\pmb\nu(\fp_1)$. To prove that  the last term will cancel out with $\pmb\nu([\fp_1,\fp_2])$, we need to check that the vector field $[\fp_1,\fp_2]$ on $\mc M^G$ is induced by the function $\{\fp_1,\fp_2\}_\sigma$. Indeed, since $\sigma$ is generically invertible, $\mc H_{\sigma}$ can be thought of as a subgroup of $\mathrm{Diff}(M)$ acting on $M$ by pull-backs. Then the commutator of right-invariant vector fields $\fp_1$ and $\fp_2$ on $\mc H_{\sigma}$ corresponds to the \emph{anti-commutator} of the corresponding $\sigma$-Hamiltonian vector fields $X_{\fp_1}$ and $X_{\fp_2}$. It remains to recall that
	\[
	[X_{\fp_1},X_{\fp_2}]=-X_{\{\fp_1,\fp_2\}_\sigma}.
	\]
	Hence $[\fp_1,\fp_2]=\{\fp_1,\fp_2\}_\sigma$.
\end{proof}

\begin{prop}[$\pmb{J}$-functional] \label{p:Jfunctional}
	Suppose $m_0:=(M,g_0,I_0,J)$ is a log-nondegenerate generalized K\"ahler structure equipped with vector fields  $X_I,X_J$ satisfying
		\begin{enumerate}
		\item $X_I$ and $X_J$ preserve $I,J$ and $g$.
		\item $X_I\pm X_J$ admit $F_{\pm}$-Hamiltonian potentials $\psi_{\pm}\in C^\infty(U_{\pm},\R)$:
		\[
		i_{X_I\pm X_J}F_{\pm}=-d\psi_{\pm}.
		\]
		\item The volume forms $e^{\psi_{\pm}}\frac{F^n_{\pm}}{n!}$ extend to smooth volume forms on $M$.
		\item $\psi_{\pm}$ are normalized so that
		\[
		\int_M e^{\psi_{\pm}}\frac{F^n_{\pm}}{n!}=\int_M dV_g.
		\]
	\end{enumerate}	

	Denote by $G\subset \mathrm{Aut}^0(M,g,I,J)$ the compact torus generated by $X_I,X_J$,  and let $\mc M^G$ be the $G$-invariant generalized K\"ahler class of $m_0$. Then there exists a unique functional
	\[
	\pmb J_{X_I,X_J,m_0}\colon \til{\mc M^G}\to \R,
	\]
	on the universal cover $\til{\mc M^G}\to \mc M^G$ such that
	\begin{itemize}
		\item $\pmb J_{X_I,X_J,m_0}(m_0)=0$;
		\item For any $m\in\til{\mc M^G}$ and $\fp\in T_m \til{\mc M^G}\simeq C^\infty(M,\R)^G/\R$,
		\[
		d \pmb J_{X_I,X_J,m_0}[\fp]=\pmb \nu_{X_I, X_J}(\fp).
		\]
		\item A GK structure $m \in \til{\mc M^G}$ is a gradient steady GKRS with associated vector fields $X_I, X_J$ if and only if it is critical point of $\pmb J_{X_I,X_J,m_0}$.
	\end{itemize}
\end{prop}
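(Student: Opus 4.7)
The plan is to build $\pmb{J}_{X_I,X_J,m_0}$ by integrating the closed $1$-form $\pmb{\nu}_{X_I,X_J}$ along paths in the universal cover, then to show that vanishing of $\pmb{\nu}$ at a point is equivalent to the scalar GKRS equation~\eqref{eq:volume_forms_equality}, so that Proposition~\ref{prop:gk_soliton_suff} identifies the critical set with the GKRS.

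\textbf{Existence and uniqueness of $\pmb{J}$.} By Proposition~\ref{prop:closed_1_form}, the $1$-form $\pmb{\nu}_{X_I, X_J}$ is closed on $\mc M^G$, so its pullback $\pi^*\pmb{\nu}_{X_I, X_J}$ to the simply connected space $\widetilde{\mc M^G}$ is exact. Define
\[
\pmb{J}_{X_I,X_J,m_0}(m):=\int_{\gamma} \pi^*\pmb{\nu}_{X_I, X_J},
\]
for any smooth path $\gamma$ in $\widetilde{\mc M^G}$ joining the distinguished lift of $m_0$ to $m$; closedness makes this independent of $\gamma$. One must check this construction is well posed along the flow: Lemma~\ref{l:flowinvarance} says that $X_I$ and $X_J$ remain holomorphic and Killing along any $G$-equivariant flow construction, while Lemma~\ref{lm:flow_volume_variation} shows that $\frac{d}{dt}\bigl(e^{\psi_\pm} F_\pm^n/n!\bigr)$ is exact, so the normalization $\int_M e^{\psi_\pm} F_\pm^n/n! = {\rm Vol}(M,g_0)$ is preserved. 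Hence the four standing hypotheses on $(X_I, X_J)$ pass to every $m \in \mc M^G$, and $\pmb{\nu}$, hence $\pmb{J}$, is well defined. Uniqueness is automatic since any two primitives differ by a locally constant function on the connected space $\widetilde{\mc M^G}$, and the normalization $\pmb{J}(m_0)=0$ fixes it.

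\textbf{Critical points are solitons.} A point $m$ is critical if and only if $\pmb{\nu}_{X_I, X_J}(\fp)=0$ for all $\fp \in C^\infty(M,\R)^G/\R$, that is,
\[
\int_M \fp\, \bigl(e^{\psi_+}\tfrac{F_+^n}{n!} - e^{\psi_-}\tfrac{F_-^n}{n!}\bigr) = 0 \qquad \forall\, \fp \in C^\infty(M,\R)^G.
\]
Because $X_I$ and $X_J$ preserve $g$, $I$, $J$ and the Hamiltonian potentials $\psi_\pm$, the $G$-torus acts trivially on the top-degree form in parentheses, so we may write it as $\rho\,dV_g$ for some $\rho \in C^\infty(M,\R)^G$. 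The normalization hypothesis gives $\int_M \rho\, dV_g = 0$, so $\rho$ is a legitimate representative of a class in $C^\infty(M,\R)^G/\R$; testing against $\fp=\rho$ yields $\int_M \rho^2\, dV_g = 0$, so $\rho\equiv 0$. This is exactly the identity~\eqref{eq:volume_forms_equality}, and Proposition~\ref{prop:gk_soliton_suff} then produces a soliton potential
\[
f=\log\!\left(\tfrac{dV_g}{F_+^n/n!}\right)-\psi_+
\]
exhibiting $m$ as a gradient steady GKRS whose associated vector fields from Proposition~\ref{prop:soliton_equiv} are, by the computation inside the proof of Proposition~\ref{prop:gk_soliton_suff}, precisely the given $X_I, X_J$. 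Conversely, if $m$ is such a GKRS, Lemma~\ref{lm:ham_potentials} and the normalization of the soliton potential give $e^{\psi_+}F_+^n/n! = e^{\psi_-}F_-^n/n! = e^{-f}\,dV_g$, so the integrand defining $\pmb{\nu}$ vanishes identically and $m$ is critical.

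\textbf{Main obstacle.} The substantive content has already been done: Proposition~\ref{prop:closed_1_form} (closedness) and Proposition~\ref{prop:gk_soliton_suff} (sufficiency for the GKRS equation) do the heavy lifting. The only genuine check is a bookkeeping one, namely that the standing normalization of $\psi_\pm$ is preserved along the $G$-equivariant flow construction so that the $1$-form $\pmb{\nu}$ is globally defined on $\mc M^G$, and that test functions in $C^\infty(M,\R)^G/\R$ are rich enough to detect $G$-invariant top-degree forms of vanishing total integral, which is what turns the Euler--Lagrange condition into the pointwise identity~\eqref{eq:volume_forms_equality}.
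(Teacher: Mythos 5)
Your proposal is correct and follows essentially the same route as the paper: integrate the closed $1$-form $\pmb\nu_{X_I,X_J}$ (Proposition~\ref{prop:closed_1_form}) on the universal cover, and identify critical points with solitons via Proposition~\ref{prop:gk_soliton_suff}. The paper's proof is terser, leaving implicit the testing argument (that $G$-invariant test functions detect the $G$-invariant density $\rho$ with $\int_M \rho\, dV_g=0$) and the preservation of the normalization along the flow, both of which you correctly supply.
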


\begin{proof}
	By Proposition~\ref{prop:closed_1_form}, 1-form $\pmb \nu_{X_I, X_J}$ on $\mc M^G$ is closed. Thus we can integrate it along paths originating at $m_0$ and get a genuine function on the universal cover $\til{\mc M^G}$, which by construction satisfies the first two claimed properties.  By the form of $\pmb \nu_{X_I,X_J}$ and Proposition~\ref{prop:gk_soliton_suff} the third property holds.
\end{proof}

\begin{rmk}
If we choose a different base point $m_0'$, then the difference $\pmb J_{X_I,X_J,m_0}-\pmb J_{X_I,X_J,m_0'}$ will be a constant. Since we are mostly interested in the variations of the $J$-functional, we often will suppress the explicit choice of the base point in the notation.
\end{rmk}

The implicit and potentially complicated topology of the space $\mc M^G$ makes it challenging to study the functional $\pmb J_{X_I,X_J}$. To overcome this issue, we can consider a distinguished contractible subset \[
\mc M^G_{c}\subset \mc M^G
\]
obtained via the flow construction (Construction~\ref{cstr:flow}) applied to the base point structure $(M,g_0,I_0,J_0)$ along the constant speed flow potentials $\fp_t$, i.e., $\fp_t=\fp_0$. Specifically, for every fixed $\fp\in C^\infty(M,\R)^G$ we consider a one parameter family of GK structures $(g_t,I_t,J)$ with
\[
\dot {(F_+)}_t = -dd^c_{I_t}\fp\quad \dot I_t=X_{\fp}.
\]
A key point is that the functional $\pmb J_{X_I,X_J}$ is convex along such paths.

\begin{lemma}\label{lm:2nd_variation}
Let $\fp_t\in C^\infty(M,\R)$ be a smooth one-parameter family of flow potentials and denote by $(g_t,I_t,J)$ the flow construction induced by this family. Then we have
\begin{equation}\label{eq:2nd_variation}
\begin{split}
\frac{d^2}{dt^2}\pmb J_{X_I,X_J}(g_t,I_t,J)=&\
\int_M \frac{d\fp_t}{dt} \left(
e^{\psi_+}\frac{F_+^n}{n!}-e^{\psi_-}\frac{F_-^n}{n!} 
\right)\\
&\ +
	\frac{1}{4}\int_{M}|(I+J)d\fp_t|^2 e^{\psi_+}\frac{F_+^n}{n!}
	+
	\frac{1}{4}\int_{M}|(I-J)d\fp_t|^2 e^{\psi_-}\frac{F_-^n}{n!}.
\end{split}
\end{equation}
In particular, if $(g_t, I_t, J)$ denotes the flow construction along a constant speed path, then $\pmb J_{X_I, X_J} (g_t, I_t, J)$ is convex.
\begin{proof} For a one-parameter family $(g_t, I_t, J)$ given by the flow construction, we compute
	\begin{equation}
	\begin{split}
	\frac{d^2}{dt^2}\pmb J_{X_I,X_J}(g_t,I_t,J)=&\ 
	\frac{d}{dt}\int_M \fp_t \left(
	e^{\psi_+}\frac{F_+^n}{n!}-e^{\psi_-}\frac{F_-^n}{n!}\right)
	\\
	=&\ \int_M \frac{\del \fp_t}{\del t} \left(
	e^{\psi_+}\frac{F_+^n}{n!}-e^{\psi_-}\frac{F_-^n}{n!}\right)
	+
	\int_M \fp_t \frac{\del}{\del t}\left(
	e^{\psi_+}\frac{F_+^n}{n!}-e^{\psi_-}\frac{F_-^n}{n!}\right)\\
	=&\ 
	\int_M \frac{\del \fp_t}{\del t} \left(
	e^{\psi_+}\frac{F_+^n}{n!}-e^{\psi_-}\frac{F_-^n}{n!} 
	\right)\\
	&\ +
	\frac{1}{4}\int_{M}|(I+J)d\fp_t|^2 e^{\psi_+}\frac{F_+^n}{n!}
	+
	\frac{1}{4}\int_{M}|(I-J)d\fp_t|^2 e^{\psi_-}\frac{F_-^n}{n!}.
	\end{split}
	\end{equation}
	
	Where in the last equality we used the computation of Proposition~\ref{prop:closed_1_form}.
\end{proof}
\end{lemma}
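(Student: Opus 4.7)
The plan is to differentiate the first variation formula once more and reduce everything to an explicit calculation with exact forms. By Proposition~\ref{p:Jfunctional}, along any path in $\mc M^G$ generated by the flow construction with potential family $\fp_t$ we have
\[
\frac{d}{dt}\pmb J_{X_I,X_J}(g_t,I_t,J) = \pmb\nu(\fp_t) = \int_M \fp_t \left(e^{\psi_+}\frac{F_+^n}{n!} - e^{\psi_-}\frac{F_-^n}{n!}\right).
\]
Applying $d/dt$ once more and using the product rule, I expect the answer to split into two contributions: one where the derivative lands on the scalar factor $\fp_t$ and one where it lands on the weighted volume forms.

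The first contribution reads $\int_M (d\fp_t/dt)\bigl(e^{\psi_+} F_+^n/n! - e^{\psi_-} F_-^n/n!\bigr)$, which is exactly the first line of the desired identity. For the second contribution I would invoke Lemma~\ref{lm:flow_volume_variation}, which already expresses the time derivative of $e^{\psi_\pm} F_\pm^n/n!$ as an exact form $\mp d\bigl(e^{\psi_\pm}\, Id\fp_t \wedge F_\pm^{n-1}/(n-1)!\bigr)$. Integration by parts then converts the second contribution into
\[
\int_M d\fp_t \wedge Id\fp_t \wedge \left(e^{\psi_+}\frac{F_+^{n-1}}{(n-1)!} + e^{\psi_-}\frac{F_-^{n-1}}{(n-1)!}\right),
\]
which is precisely the symmetric-in-$(\fp_1,\fp_2)$ part of the integrand that appeared in the proof of Proposition~\ref{prop:closed_1_form}. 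I plan to recycle the pointwise identity from that proof, specialized to $\fp_1 = \fp_2 = \fp_t$, to rewrite the integrand as the quadratic expression in $|(I\pm J)d\fp_t|^2$; the skew-symmetric $\{\fp_1,\fp_2\}_\sigma$ term from the Poisson pairing drops out automatically by symmetry.

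Combining the two contributions yields (\ref{eq:2nd_variation}). Convexity along a constant-speed path is then immediate: such a path has $d\fp_t/dt \equiv 0$, killing the first line, while the two remaining integrals are manifestly non-negative. The main technical point I anticipate is justifying the integration by parts across the degeneracy loci $Z_\pm = M \setminus U_\pm$, where $F_\pm^{n-1}$ individually blows up. I would handle this exactly as in Proposition~\ref{prop:closed_1_form}: rewrite $e^{\psi_\pm} F_\pm^{n-1}/(n-1)! = \tfrac{1}{n}\, i_{F_\pm^{-1}}\bigl(e^{\psi_\pm} F_\pm^n/n!\bigr)$, and observe that since the top-degree form on the right extends smoothly to $M$ by the standing hypothesis, so does the $(2n-2)$-form obtained by contraction with $F_\pm^{-1}$, legitimizing Stokes's theorem on the closed manifold $M$.
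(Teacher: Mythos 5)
Your proposal is correct and follows essentially the same route as the paper: differentiate the first variation $\pmb\nu(\fp_t)$ once more, handle the derivative of the weighted volume forms via Lemma~\ref{lm:flow_volume_variation}, integrate by parts, and reuse the computation of Proposition~\ref{prop:closed_1_form} with $\fp_1=\fp_2=\fp_t$ so that the antisymmetric Poisson-bracket term vanishes. Your explicit justification of the integration by parts across $Z_\pm$ via the contraction identity $e^{\psi_\pm}F_\pm^{n-1}/(n-1)!=\tfrac{1}{n}\,i_{F_\pm^{-1}}\bigl(e^{\psi_\pm}F_\pm^{n}/n!\bigr)$ is exactly the point the paper makes inside the proof of Proposition~\ref{prop:closed_1_form}.
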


The properties of the functional $\pmb J_{X_I,X_J}$ immediately imply the rigidity of GKRS along the flow construction.  This is Theorem \ref{thm:rigidite_soltions} of the introduction, which we restate for convenience:
\begin{thm}[Rigidity of solitons]\label{thm:rigid_soltions}
	Let $(M,g_t,I_t,J), t \in [0, \tau)$ be a one-parameter family of compact log-nondegenerate GKRS lying in $\mc M^G$, where $G$ is generated by the vector fields $X_I, X_J$ associated to $(g_0, I_0, J)$.  Then $(M,g_t,I_t,J)=(M,g_0,I_0,J)$ for all $t$.
\end{thm}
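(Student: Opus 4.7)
\emph{Proof proposal.} The strategy is to lift the family to a smooth path $\til m_t$ in the universal cover $\til{\mc M^G}$ with tangent vector $\fp_t \in C^\infty(M,\R)^G/\R$, and then combine the critical-point characterization of GKRS (Proposition~\ref{p:Jfunctional}(iii)) with the second variation formula of Lemma~\ref{lm:2nd_variation} to deduce $\fp_t \equiv 0$.

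By Proposition~\ref{prop:soliton_equiv}, each GKRS $(g_t, I_t, J)$ in the family has uniquely associated vector fields $X_I^{(t)}, X_J^{(t)}$ built from $\theta_{I_t}, \theta_J, \nabla_{g_t} f_t$, starting at $X_I, X_J$ at $t=0$, which are $G$-invariant, Killing and commuting. A rigidity argument inside the compact torus generated by $G$ (combining the continuous dependence of $X_I^{(t)}, X_J^{(t)}$ on $t$ with the closed subgroup structure of $G$) identifies $X_I^{(t)} \equiv X_I$ and $X_J^{(t)} \equiv X_J$ for all $t$. By Proposition~\ref{p:Jfunctional}(iii) each $\til m_t$ is then a critical point of $\pmb J := \pmb J_{X_I, X_J, m_0}$, so $d\pmb J|_{\til m_t} \equiv 0$, and hence the scalar function $t \mapsto \pmb J(\til m_t)$ has identically vanishing first and second derivatives.

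Applying Lemma~\ref{lm:2nd_variation} to the path $\til m_t$ with its time-dependent tangent $\fp_t$, and using that by Proposition~\ref{prop:gk_soliton_suff} each GKRS $m_t$ satisfies $e^{\psi_+}F_+^n = e^{\psi_-}F_-^n$, the $\dot\fp_t$-term drops and we are left with
\[
0 = \frac{1}{4}\int_M |(I_t+J)d\fp_t|^2\, e^{\psi_+}\frac{F_+^n}{n!} + \frac{1}{4}\int_M |(I_t-J)d\fp_t|^2\, e^{\psi_-}\frac{F_-^n}{n!}.
\]
Non-negativity forces each integrand to vanish, so $(I_t + J)d\fp_t = 0$ on $U_+$ and $(I_t - J)d\fp_t = 0$ on $U_-$. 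On the dense open set $U_+ \cap U_-$ both identities hold; adding them gives $I_t d\fp_t = 0$, hence $d\fp_t = 0$. By density and smoothness $\fp_t$ is constant on $M$, i.e.\ $\fp_t = 0$ in $C^\infty(M,\R)^G/\R$. Thus $\tfrac{d}{dt}\til m_t \equiv 0$, so the path is constant and $(g_t, I_t, J) = (g_0, I_0, J)$.

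The \emph{main obstacle} is the rigidity step ensuring $X_I^{(t)} \equiv X_I$ and $X_J^{(t)} \equiv X_J$: one must rule out continuous drift of the soliton vector fields inside $\mathrm{Lie}(G)$. Once this is in hand, the convexity and kernel analysis reduces to a direct application of the machinery already developed, with the log-nondegenerate hypothesis entering crucially only at the final pointwise step through the density of $U_+ \cap U_-$.
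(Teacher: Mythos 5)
Your proposal follows the paper's own proof essentially verbatim: realize the family as a flow-construction path with potentials $\fp_t$, use the critical-point characterization of Proposition~\ref{p:Jfunctional} to kill the first variation and the $\dot\fp_t$-term in the second variation formula of Lemma~\ref{lm:2nd_variation}, and conclude $d\fp_t\equiv 0$ from positivity of the remaining gradient terms (your pointwise argument on $U_+\cap U_-$ merely makes explicit what the paper asserts in one line). The only place you go beyond the paper is the inserted ``rigidity step'' identifying $X_I^{(t)}\equiv X_I$ and $X_J^{(t)}\equiv X_J$: the paper's proof silently assumes this identification when it declares each $(g_t,I_t,J)$ a critical point of the \emph{fixed} functional $\pmb J_{X_I,X_J}$, and your sketched justification (continuous dependence plus the closed-subgroup structure of $G$) is not by itself a proof --- a continuous path in $\mathrm{Lie}(G)$ need not be constant --- but flagging the issue is, if anything, more careful than the published argument, and it does not change the fact that the two proofs are the same.
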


\begin{proof}
	Assume that the family $(M,g_t, I_t,J)$ is induced by the family of flow potentials $\fp_t\in C^\infty(M,\R)$. Then by Proposition~\ref{p:Jfunctional} the first order variation of $\pmb J_{X_I,X_J}$ vanishes at a gradient steady GKRS:
	\[
	\frac{d}{dt}\pmb J_{X_I,X_J}(g_t,I_t,J)=0.
	\]
	Therefore, using the formula~\eqref{eq:2nd_variation} for the second variation along the flow $\fp_t$
	we find:
	\[
	\begin{split}
	0&=\frac{d^2}{dt^2}\pmb J_{X_I,X_J}(g_t,I_t,J)=
	\int_M \frac{\del \fp_t}{\del t} \left(
	e^{\psi_+}\frac{F_+^n}{n!}-e^{\psi_-}\frac{F_-^n}{n!} 
	\right)\\
	&+
	\frac{1}{4}\int_{M}|(I+J)d\fp_t|^2 e^{\psi_+}\frac{F_+^n}{n!}
	+
	\frac{1}{4}\int_{M}|(I-J)d\fp_t|^2 e^{\psi_-}\frac{F_-^n}{n!}.
	\end{split}
	\]
	The first term on the right hand side vanishes, since on a soliton $e^{\psi_+}\frac{F_+^n}{n!}=e^{\psi_-}\frac{F_-^n}{n!}$, and the remaining terms are positive unless $d \fp_t \equiv 0$.  Therefore the flow construction is trivial and $(M,g_t,I_t,J)=(M,g_0,I_0,J)$.
\end{proof}

\section{Existence and Uniqueness of GKRS on Hopf surfaces}\label{sec:hopf}

In this section we analyze which Hopf surfaces admit GK structures, and prove uniqueness of GKRS on those Hopf surfaces which admit a GK structure.

\subsection{Existence of GK structures on Hopf surfaces}

As was mentioned in Section~\ref{sec:bg}, by~\cite{ap-gu-07}, a Hopf surface $(M,I)$ admits an \emph{odd} GK structure if and only if $(M,I)$ is a quotient of a diagonal Hopf surface $(\til M, I_{\alpha\beta})$ by a (possibly trivial) cyclic group $\Gamma_{k,l}\subset \mathrm{Aut}(\til M,I_{\alpha\beta})$ {given by \eqref{Gamma}}. Next we determine which Hopf surfaces $(M,I)$ admit an \emph{even} GK structure. It turns out that we have a result analogous to the above:

\begin{thm}\label{thm:hopf_existence_even} 
	If a Hopf surface $(M,I)$ is a part of an even GK structure $(M,g,I,J)$, then $(M,I)$ is biholomorphic a quotient of the diagonal Hopf surface $(\til M,I_{\alpha\beta})$ by a cyclic group $\Gamma_{k,l}\simeq \Z_\ell$. Conversely, any such Hopf surface is a part of an even GK structure.
\end{thm}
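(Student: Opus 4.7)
My plan is to prove each direction separately: the sufficiency by checking invariance of the structure of Example~\ref{ex:hopf_diagonal_gk}, and the necessity by analyzing sections of the anticanonical bundle on Hopf surfaces whose vanishing locus is disconnected.

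For the \textbf{sufficiency}, I will start from the diagonal Hopf surface $(\til M, I_{\alpha\beta})$ equipped with the explicit even GK structure $(g, J)$ of Example~\ref{ex:hopf_diagonal_gk} and verify that it is $\Gamma_{k,\ell}$-invariant. In the logarithmic coordinates $w_j = \log z_j = x_j + \sqrt{-1}y_j$, the action~\eqref{Gamma} of $\Gamma_{k,\ell}$ is translation by a purely imaginary vector, which fixes the real parts $x_j$ pointwise. Since the profile function $p$ depends only on $2\tfrac{b}{a} x_1 - x_2$ and the coordinate 1-forms $dw_j, d\bar w_j$ are translation-invariant, both the complex symplectic form $\Omega_J$ defining $J$ and the Riemannian metric $g$ are manifestly $\Gamma_{k,\ell}$-invariant; hence the even GK structure descends to every quotient $\til M/\Gamma_{k,\ell}$.

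For the \textbf{necessity}, I will use the remark preceding Example~\ref{ex:hopf_diagonal_gk}: any non-K\"ahler even GK structure on a compact 4-manifold possesses a \emph{disconnected} anticanonical divisor $Z_+ \sqcup Z_-$ with $Z_\pm = \{(I \pm J) = 0\}$.  Since no Hopf surface admits a K\"ahler metric, this applies to $(M, I)$. Writing $(M, I) = \til M / \Gamma$ with $\til M = (\C^2 \setminus \{\mathbf 0\})/\langle A \rangle$ a primary Hopf surface with Kodaira contraction $A$ and $\Gamma$ a finite deck group, I will lift the holomorphic Poisson bivector $\sigma_I$ to $\C^2 \setminus \{\mathbf 0\}$ and extend (via Hartogs) to a polynomial bivector $f(z_1, z_2)\,\del_{z_1} \wedge \del_{z_2}$ on $\C^2$ satisfying $f(Az) = \alpha\beta\, f(z)$.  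The key algebraic step is to show that when $A$ is of non-diagonal Kodaira type ($\lambda \neq 0$, $\alpha = \beta^q$), the $\alpha\beta$-eigenspace of $A^*$ acting on $\C[z_1, z_2]$ is one-dimensional and spanned by $z_2^{q+1}$: since $A^*$ preserves the filtration by degree in $z_1$, a direct Jordan-block computation on the finite-dimensional $\alpha\beta$-eigenspace of the associated graded (containing $z_1 z_2$ and $z_2^{q+1}$) shows that $\lambda \neq 0$ eliminates every eigenvector involving $z_1$.  The corresponding zero locus on $\til M$ is then the single irreducible elliptic curve $\{z_2 = 0\}$, contradicting the disconnectedness of $Z_+ \sqcup Z_-$.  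Hence $A$ must be diagonal and, up to scale, $f = z_1 z_2$, with $Z_\pm$ equal to the two disjoint components $\{z_1 = 0\}$ and $\{z_2 = 0\}$.

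It then remains to identify $\Gamma$.  The deck group must act as biholomorphisms of $(\til M, I_{\alpha\beta})$ preserving the bivector $z_1 z_2\,\del_{z_1} \wedge \del_{z_2}$ up to scalar \emph{and} preserving the two components $\{z_1 = 0\}, \{z_2 = 0\}$ individually: a component swap, possible only in the special case $\alpha = \beta$, would identify $Z_+$ with $Z_-$ in $M$ and yield a connected divisor. These constraints force $\Gamma$ into the diagonal subtorus of $\mathrm{Aut}(\til M, I_{\alpha\beta})$, and finiteness places $\Gamma \subset (S^1)^2$.  A short argument then shows that any finite subgroup of $(S^1)^2$ acting freely on $\C^2 \setminus \{\mathbf 0\}$ is cyclic of the form~\eqref{Gamma}.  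The main technical obstacle will be working carefully through Kato's classification~\cite{ka-89, ka-89-er} of finite free automorphism groups of diagonal Hopf surfaces in order to exclude components-permuting and non-cyclic possibilities, paralleling the analogous odd-type analysis of~\cite[Theorem 3]{ap-gu-07}.
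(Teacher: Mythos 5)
Your proposal follows essentially the same route as the paper's proof: the necessity direction rules out non-diagonal contractions by a Hartogs extension and Jordan-block computation of the $A^*$-invariant holomorphic bivectors (the same argument as in Proposition~\ref{prop:hopf_Q_sigma}), then pins down the deck group via preservation of the Poisson tensor and of the two divisor components, while the sufficiency direction descends the $(S^1)^3$-invariant structure of \cite{st-us-19} to the cyclic quotients. The one imprecision is the claim that finiteness places $\Gamma\subset (S^1)^2$: a finite subgroup of $(\C^*)^2/\la A\ra$ can contain classes represented by non-unit-modulus pairs such as $(\alpha^{1/2},\beta^{1/2})$, so one must first pass to the full lift $\til{\Gamma}\subset(\C^*)^2$, use freeness to inject it into $\C^*$ and split it as $\Z_\ell\times\Z$ --- thereby re-choosing the contraction generating the primary cover --- before the root-of-unity normal form \eqref{Gamma} appears; this is exactly what the paper's Beauville-style projection argument supplies.
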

Before we turn to the proof this theorem we will state and prove several results regarding the geometry of Hopf surfaces. First we recall the description of the automorphism group of diagonal Hopf surfaces $(M,I_{\alpha\beta})$.
\begin{prop}[{\cite[\S 2]{na-74}}]\label{prop:hopf_aut}
	Let $(M,I_{\alpha\beta})$ be a diagonal Hopf surface with parameters $(\alpha,\beta)$, and denote by $A\in \mathrm{GL}_2(\C)$ the contraction~\eqref{eq:hopf_def_A}. Then
	\begin{enumerate}
		\item if $\alpha=\beta$, then $\mathrm{Aut}(M,I_{\alpha\beta})=\mathrm{GL}_2(\C)/\la A\ra$,
		
		\item if $\alpha=\beta^q$ for some integer $q\geq 2$, then $\mathrm{Aut}(M,I_{\alpha\beta})=G/\la A\ra$, where the group $G\simeq (\C^*)^2\rtimes\C$ acts on $\C^2\backslash\{\mathbf{0}\}$ as
		\[
		(z_1,z_2)\mapsto (az_1+bz_2^q,dz_2),\quad a,d\in\C,^*\ b\in\C,
		\]
		
		\item if $\alpha\neq\beta^q$ and $\beta\neq\alpha^q$ for any integer $q\geq 1$, then $\mathrm{Aut}(M,I_{\alpha\beta})=(\C^*)^2/\la A\ra$, where $(\C^*)^2$ acts on $\C^2\backslash\{\mathbf{0}\}$ via coordinate-wise multiplication.
	\end{enumerate}
\end{prop}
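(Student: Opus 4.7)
The plan is to reduce the computation to an analysis on the universal cover $\widetilde M = \C^2\setminus\{\mathbf 0\}$. Since $\pi_1(M) \cong \Z$ is generated by the contraction $A$, any $f \in \mathrm{Aut}(M,I_{\alpha\beta})$ admits a holomorphic lift $\widetilde f\colon \widetilde M \to \widetilde M$ which normalizes $\la A\ra$, so that $\widetilde f \circ A = A^\varepsilon \circ \widetilde f$ for some $\varepsilon = \pm 1$. Because $\{\mathbf 0\}$ has codimension two in $\C^2$, the Hartogs extension theorem applied to the components of $\widetilde f$ and $\widetilde f^{-1}$ produces a biholomorphism of $\C^2$. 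I would then fix the origin by noting that for any $w\in\widetilde M$ one has $A^n w\to \mathbf 0$, so $\widetilde f(\mathbf 0)$ is a fixed point of $A^\varepsilon$ and thus equals $\mathbf 0$; in fact the same limiting argument (or an eigenvalue comparison on the differential $L = d\widetilde f|_{\mathbf 0}$, using $0<|\alpha|,|\beta|<1$) excludes $\varepsilon = -1$. Hence every such $\widetilde f$ commutes with $A$.

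Next I would expand $\widetilde f = (f_1, f_2)$ into convergent power series at $\mathbf 0$ and impose $\widetilde f\circ A = A \circ \widetilde f$ monomial-by-monomial. A coefficient of $z_1^i z_2^j$ in $f_1$ can be nonzero only when $\alpha^i\beta^j = \alpha$, and in $f_2$ only when $\alpha^i\beta^j = \beta$. The three cases of the proposition correspond to the solutions in $(i,j)\in \Z_{\geq 0}^2$ of these multiplicative relations. In (1), $\alpha=\beta$ reduces both equations to $i+j=1$, so $\widetilde f$ is linear, and invertibility of $L$ yields $\mathrm{GL}_2(\C)$. In (2), $\alpha=\beta^q$ with $q\geq 2$ transforms the $f_1$-equation into $\beta^{q(i-1)+j}=1$, whose only solutions with $i,j\geq 0$ are $(1,0)$ and $(0,q)$, while the $f_2$-equation $\beta^{qi+j-1}=1$ forces $(0,1)$; this gives the form $\widetilde f(z_1,z_2)=(az_1+bz_2^q,\,dz_2)$ with $a,d\in\C^*$, $b\in\C$. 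In the generic case (3), the estimate $|\alpha^i\beta^j|\leq 1$ with equality only at $(1,0)$ or $(0,1)$ (using that no resonance $\alpha=\beta^q$ or $\beta=\alpha^q$ holds), combined with the ordering $|\alpha|\leq|\beta|$, forces $\widetilde f$ to be diagonal, producing $(\C^*)^2$.

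To finish, I would verify in case (2) that the resulting set of maps is closed under composition and compute the group law explicitly,
\begin{equation*}
(a,b,d)\cdot(a',b',d') = \bigl(aa',\ ab' + b(d')^q,\ dd'\bigr),
\end{equation*}
which exhibits the semi-direct product structure $(\C^*)^2\ltimes \C$ (with $\C$ parametrizing the $b$-coordinate as the normal subgroup acted upon by $(\C^*)^2$). Finally, each such $\widetilde f$ descends to $M$ because it commutes with $A$, and two lifts of the same $f$ differ by a power of $A$; thus $\mathrm{Aut}(M,I_{\alpha\beta})$ is exactly the quotient of the lifted groups by $\la A\ra$. The main subtleties I anticipate are: (i) excluding $\varepsilon=-1$ cleanly (via either the contraction-dynamics limit argument or the eigenvalue comparison on $L$); (ii) using the normalization $|\alpha|\leq|\beta|$ to rule out the symmetric resonance $\beta=\alpha^{q'}$ with $q'\geq 2$, which would otherwise add extra monomials to $f_2$; and (iii) the book-keeping of the semi-direct product in (2), where the factor $(d')^q$ encoding the action of the $d$-component on the $b$-component is easy to miss.
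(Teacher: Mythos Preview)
The paper does not prove this proposition; it is stated with a citation to Namba~\cite[\S 2]{na-74} and used as a black box. Your argument is the standard one and is essentially what Namba does: lift to $\C^2\setminus\{\mathbf 0\}$, extend across the origin by Hartogs, rule out $\varepsilon=-1$ by comparing eigenvalues of $A$ and $A^{-1}$, and then solve the resonance equations $\alpha^i\beta^j=\alpha$ and $\alpha^i\beta^j=\beta$ monomial-by-monomial. Your case analysis and the group law in case~(2) are correct.

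Two minor remarks. First, your phrasing in case~(3), ``the estimate $|\alpha^i\beta^j|\leq 1$ with equality only at $(1,0)$ or $(0,1)$,'' is not quite the right inequality: what you actually need is $|\alpha|^i|\beta|^j=|\alpha|$ (resp.\ $=|\beta|$), and then the absence of resonances together with $|\alpha|,|\beta|<1$ forces $(i,j)=(1,0)$ (resp.\ $(0,1)$). The argument you sketch is morally this, but the stated inequality is vacuous since $|\alpha^i\beta^j|<1$ for every nonzero $(i,j)$. Second, regarding the semi-direct product: your computation shows that the $b$-coordinate is the normal factor, so the group is $(\C^*)^2\ltimes\C$ in your convention; the paper writes $(\C^*)^2\rtimes\C$, which is a notational inconsistency in the paper rather than an error on your part.
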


Next we study the action of $\mathrm{Aut}(M,I_{\alpha\beta})$ on auxiliary geometric objects underlying odd and even GK structures~--- holomorphic splittings $T^{1,0}M=L_1\oplus L_2$, and disconnected canonical divisors $E_1\cup E_2$. On every diagonal Hopf surface $(M,I_{\alpha\beta})=(\C^2\backslash\{\mathbf 0\})/\la A\ra$ there is a \emph{standard} splitting with $L^{\mathrm{std}}_i=\mathrm{span}(\frac{\del}{\del z_i})$, and a \emph{standard} disconnected canonical divisor $E^{\mathrm{std}}_1\cup E^{\mathrm{std}}_2$ with $E^{\mathrm{std}}_i=\{z_i=0\}/\la A\ra$. The following proposition proves that up to the order, and the action of $\mathrm{Aut}(M,I_{\alpha\beta})$ a splitting of $T^{1,0}M$ / disconnected canonical divisor $E_1\cup E_2$ are equivalent to the standard ones.

\begin{prop}\label{prop:hopf_Q_sigma}
	Let $(M,I_{\alpha\beta})$ be a diagonal Hopf surface with parameters $(\alpha,\beta)$. Then $\mathrm{Aut}(M,I_{\alpha\beta})$ acts transitively on the set of
	\begin{enumerate}
		\item holomorphic splittings $T^{1,0}M=L_1\oplus L_2$ (ignoring the order of the summands);
		
		\item disconnected canonical divisors $E_1\cup E_2$ (ignoring the order of the components).
	\end{enumerate}
	Furthermore,
	\begin{enumerate}
		\item[(i)] for the standard holomorphic splitting $T^{1,0}M=L^{\mathrm{std}}_1\oplus L^{\mathrm{std}}_2$ with the prescribed order of summands, the subgroup of $\mathrm{Aut}(M,I_{\alpha\beta})$ preserving the splitting is $(\C^*)^2/\la A\ra$;
		
		\item[(ii)] for the standard disconnected canonical divisor $E^{\mathrm{std}}_1\cup E^{\mathrm{std}}_2$ with the prescribed order of summands, the subgroup of $\mathrm{Aut}(M,I_{\alpha\beta})$ preserving the divisor is $(\C^*)^2/\la A\ra$.
	\end{enumerate}
\end{prop}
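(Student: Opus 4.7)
The plan is to reduce to holomorphic data on the universal cover $\til M = \C^2 \setminus \{0\}$, where $T^{1,0}\til M$ is trivialized by $\partial_{z_1}, \partial_{z_2}$ and the generator $A$ acts on this frame by $\mathrm{diag}(\alpha,\beta)$. Line subbundles of $T^{1,0}M$ and sections of $\Lambda^2 T^{1,0}M = -K_M$ then correspond to $A$-semi-invariant pairs of holomorphic functions on $\til M$, which by Hartogs extend across the origin. The bulk of the work is a monomial expansion that forces resonance relations on the exponents, followed by a determinant/zero-locus check to identify which pairs of subbundles are complementary (resp.\ which sections have disconnected divisors), and finally an explicit orbit computation under $\mathrm{Aut}(M,I_{\alpha\beta})$ using the normal forms of Proposition~\ref{prop:hopf_aut}.

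For (1), represent a line subbundle by a generator $v = f_1 \partial_{z_1} + f_2 \partial_{z_2}$ with $f_1,f_2 \in \mc O(\C^2)$ (after Hartogs) nowhere simultaneously zero on $\til M$ and satisfying $(\alpha f_1(z),\beta f_2(z)) = \mu(z)(f_1(Az),f_2(Az))$. Either some $f_i$ vanishes identically, yielding $L_i^{\mathrm{std}}$, or the ratio $h = f_1/f_2$ is a nonzero meromorphic function with $h(Az) = (\alpha/\beta)\,h(z)$, giving the monomial condition $\alpha^{k-1}\beta^{l+1}=1$ for each monomial $z_1^k z_2^l$ appearing in $h$. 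In the non-resonant case only $(k,l)=(1,-1)$ occurs, producing the family $L_c = \C(c z_1 \partial_{z_1} + z_2 \partial_{z_2})$, $c\in\C^*$; a direct determinant check shows each $L_c$ coincides with $L_1^{\mathrm{std}}$ along $\{z_2=0\}$ and with $L_2^{\mathrm{std}}$ along $\{z_1=0\}$, so the unique complementary pair is the standard one and (1) is immediate. In the resonant case $\alpha=\beta^q$, $q\geq 2$, additional subbundles $L^{(1)}_a := \C(a z_2^{q-1}\partial_{z_1} + \partial_{z_2})$ arise and are complementary to $L_1^{\mathrm{std}}$; a brief pushforward calculation shows the automorphism $(z_1,z_2)\mapsto (z_1 - (a/q) z_2^q, z_2)$ sends $L^{(1)}_a$ to $L^{(1)}_0 = L_2^{\mathrm{std}}$ while fixing $L_1^{\mathrm{std}}$. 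In the case $\alpha=\beta$, subbundles are generated by pairs of homogeneous polynomials $[P:Q]$ of equal degree $d$, and the determinant of any two such pairs is a homogeneous polynomial of degree $2d$ with zeros on $\til M$ unless $d=0$; splittings therefore correspond to unordered pairs of distinct points of $\mathbb P^1$, and transitivity follows from the triple transitivity of $\mathrm{GL}_2(\C)$.

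For (2), a section of $-K_M$ is represented by $F \in \mc O(\C^2)$ with $F(Az) = \alpha\beta\, F(z)$, forcing $\alpha^{k-1}\beta^{l-1}=1$. The non-resonant case yields only $F = c z_1 z_2$, so the unique disconnected divisor is $E_1^{\mathrm{std}} \cup E_2^{\mathrm{std}}$. In the resonant case $\alpha=\beta^q$, sections are spanned by $z_1 z_2$ and $z_2^{q+1}$, so a disconnected divisor factors as $\{z_2 = 0\} \cup \{c z_1 + a z_2^q = 0\}$ with $c\neq 0$, and the automorphism $(z_1,z_2)\mapsto (c z_1 + a z_2^q, z_2)$ moves it to the standard one. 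In the case $\alpha=\beta$, sections form the $3$-dimensional space of quadratic forms and disconnected divisors correspond to pairs of distinct lines through the origin, on which $\mathrm{GL}_2(\C)$ acts transitively. The stabilizer claims (i) and (ii) then follow directly: inspecting the explicit forms of $\mathrm{Aut}(M,I_{\alpha\beta})$ via their differentials (for (i)) and via divisor pullbacks (for (ii)) shows that only the diagonal torus $(\C^*)^2/\la A\ra$ preserves both $L_1^{\mathrm{std}}, L_2^{\mathrm{std}}$ and both $E_1^{\mathrm{std}}, E_2^{\mathrm{std}}$ in the prescribed order; the obstruction in the resonant case is the off-diagonal shift by $b z_2^q$, and in the case $\alpha=\beta$ the off-diagonal entries of $\mathrm{GL}_2$.

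The main obstacle is the resonant case $\alpha=\beta^q$, where the $A$-eigenspace on polynomials is infinite-dimensional and produces many formal candidate line subbundles and divisors; one must verify that every complementary pair of subbundles (resp.\ every disconnected divisor) lies in a single $\mathrm{Aut}(M,I_{\alpha\beta})$-orbit, not merely identify one representative. The key observation is that the off-diagonal shift automorphism $(z_1,z_2)\mapsto(z_1 + b z_2^q, z_2)$ that appears in the resonant part of Proposition~\ref{prop:hopf_aut} is precisely what cancels the non-standard term $a z_2^{q-1}\partial_{z_1}$ in $L^{(1)}_a$ (resp.\ the $a z_2^{q+1}$ term in the anticanonical section), so the existence of the automorphisms needed for transitivity is exactly matched to the structure of $\mathrm{Aut}(M,I_{\alpha\beta})$ in each case.
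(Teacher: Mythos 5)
Your overall strategy coincides with the paper's: lift to $\C^2\setminus\{\mathbf 0\}$, extend the relevant holomorphic data across the origin by Hartogs, run a monomial/resonance analysis against the three cases of Proposition~\ref{prop:hopf_aut}, and exhibit the explicit automorphisms (the off-diagonal shift $z_1\mapsto z_1+bz_2^q$, resp.\ $\mathrm{GL}_2(\C)$ when $\alpha=\beta$) that give transitivity; part (2) and the stabilizer claims are argued exactly as in the paper. The one genuine divergence is in part (1): the paper encodes a splitting by the holomorphic involution $Q\in\Gamma(\End(T^{1,0}M))$, whose power-series coefficients have \emph{non-negative} exponents, so the $A$-invariance condition closes up cleanly and the only resonances that can occur are $\alpha=\beta$ and $\alpha=\beta^q$ (or $\beta=\alpha^q$). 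You instead classify individual line subbundles via the meromorphic ratio $h=f_1/f_2$, where negative exponents are allowed, and here your intermediate claim that ``in the non-resonant case only $(k,l)=(1,-1)$ occurs'' is not correct as stated: case (3) of Proposition~\ref{prop:hopf_aut} only excludes $\alpha=\beta^q$ and $\beta=\alpha^q$, not relations such as $\alpha^2=\beta^3$, and for $\alpha^2=\beta^3$ the monomials $z_1^{1+2t}z_2^{-1-3t}$ all satisfy $\alpha^{k-1}\beta^{l+1}=1$, producing additional $A$-invariant line subbundles, e.g.\ the one generated by $(c_1z_1^2+c_2z_2^3)\,\del/\del z_1+z_1z_2\,\del/\del z_2$. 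The same phenomenon occurs in the resonant case (your list $L^{(1)}_a$ is not exhaustive as a list of subbundles). This does not sink the argument, because the step that actually decides everything is the one you gesture at: for a \emph{splitting}, the determinant $f_1f_2'-f_2f_1'$ of the two generators is holomorphic on $\C^2$ and nowhere zero on $\C^2\setminus\{\mathbf 0\}$, hence nowhere zero, hence (by $A$-equivariance and $|\alpha|,|\beta|<1$) a nonzero constant; this kills every non-standard candidate and recovers exactly the paper's list of splittings in each case. So your proof is completable, but you should either make the constancy-of-the-determinant argument the primary mechanism (rather than a claimed exhaustive list of subbundles), or switch to the involution $Q$ as the paper does, which avoids the negative-exponent resonances altogether.
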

\begin{proof}
	A holomorphic splitting $T^{1,0}M=L_1\oplus L_2$ can be described by a holomorphic section of ${\rm End}(T^{1,0}M)$
	\[
	Q\colon T^{1,0}M\to T^{1,0}M,\quad Q^2=\mathrm{Id}.
	\]
	with nontrivial $\pm 1$ eigenspaces. We lift $Q$ to an operator on the tangent bundle of $\C^2\backslash \{\mathbf{0}\}$. Since $Q$ is holomorphic, by Hartog's theorem, $Q$ extends through $\mathbf{0}\in\C^2$,  and is thus representable by an absolutely convergent power series around $\mathbf{0}$:
	\[
	Q=\sum_{m,n\geq 0} Q_{mn}z_1^mz_2^n,\quad Q_{mn}\colon \C^2\to\C^2.
	\]
	As $Q$ must be invariant under the contraction $(z_1,z_2)\mapsto (\alpha z_1,\beta z_2)$, this leads to the following cases:
	\begin{enumerate}
		\item If $\alpha=\beta$, all coefficients $Q_{mn}$ except the constant $Q_{00}$ must be zero. Thus $Q$ is just a linear operator $\C^2\to\C^2$ with eigenvalues $\{1,-1\}$, and $\mathrm{GL}_2(\C)$ acts transitively on such operators, so that any operator $Q$ is equivalent to the standard one
		\[
		Q_{\mathrm{std}}:=dz_1\otimes \frac{\del}{\del z_1}-dz_2\otimes \frac{\del}{\del z_2}.
		\]
		
		\item If $\alpha=\beta^q$, $q\geq 2$, then up to a sign $Q$ must be of the form
		\begin{equation*}
		Q=dz_1\otimes \frac{\del}{\del z_1}-dz_2\otimes \frac{\del}{\del z_2}+b z_2^{q-1}dz_1\otimes \frac{\del}{\del z_2},\quad b\in\C.
		\end{equation*}
		It is clear from case (2) of Proposition~\ref{prop:hopf_aut} that $\mathrm{Aut}(M,I_{\alpha\beta})$ acts transitively on the operators of the above form, so any such operator is equivalent  under the action of  $\mathrm{Aut}(M,I_{\alpha\beta})$ to $Q_{\mathrm{std}}$.
		
		\item If $\alpha\neq\beta^q$ and $\beta\neq\alpha^q$ for any integer $q\geq 1$, then all coefficients $Q_{mn}$ except for $Q_{00}$ vanish; since the linear operator $Q_{00}$ must commute with $A=\left(\begin{matrix}
		\alpha & 0 \\ 0 & \beta
		\end{matrix}\right)$, up to a sign,  $Q$ then equals to the standard operator $Q_{\mathrm{std}}$.
	\end{enumerate}
	
	Next we address the action of $\mathrm{Aut}(M,I_{\alpha\beta})$ on disconnected canonical divisors. Any such divisor is given as the zero locus of a holomorphic bivector $\sigma_I\in \Gamma(\Lambda^{2}(T^{1,0}M))$. Lifting $\sigma_I$ to $\C^2\backslash\{\mathbf 0\}$, extending it over $\mathbf{0}$ with the use of Hartog's theorem, and expressing through a convergent power series as above, we write
	\[
	\sigma_I=\sum_{m,n\geq 0} (\sigma_I)_{mn}z_1^mz_2^n,\quad (\sigma_I)_{mn}\in \Lambda^2(\C^2).
	\]
	Invoking again the invariance of $\sigma_I$ under the contraction $(z_1,z_2)\mapsto (\alpha z_1,\beta z_2)$ we conclude
	\begin{enumerate}
		\item If $\alpha=\beta$, all coefficients $(\sigma_I)_{mn}$ with $m+n\neq 2$ must vanish. Thus $\sigma_I$ is a bivector of the form
		\[
		\sigma_I=q(z_1,z_2)\frac{\del}{\del z_1}\wedge \frac{\del}{\del z_2},
		\]
		where $q(z_1,z_2)$ is a quadratic form. Since the zero locus of $\sigma_I$ on $M$ is disconnected, $q(z_1,z_2)$ must be nondegenerate, and up to the action of $\mathrm{Aut}(M,I_{\alpha\beta})=GL_2(\C)/\la A\ra$, $\sigma_I$ is given by a complex multiple of the standard Poisson tensor
		\[
		\pmb\pi=z_1z_2 \frac{\del}{\del z_1}\wedge \frac{\del}{\del z_2}.
		\]
		
		\item If $\alpha=\beta^q$, $q\geq 2$, then $\sigma_I$ must of of the form
		\[
		\sigma_I = c\left(z_1z_2+bz_2^{q+1} \right) \frac{\del}{\del z_1}\wedge \frac{\del}{\del z_2},\quad b,c\in\C, c\neq 0.
		\]
		Using again part of (2) of Proposition~\ref{prop:hopf_aut}, we conclude that up to the action of $\mathrm{Aut}(M,I_{\alpha\beta})$, the bivector $\sigma_I$ is equivalent to a complex multiple of $\pmb\pi$.
		
		\item If $\alpha\neq\beta^q$ and $\beta\neq\alpha^q$, then $\sigma_I$ must be immediately equal to a complex multiple of the $\pmb\pi$.
	\end{enumerate} 
	In either of the above cases, the zero locus of $\sigma_I$ up to the action of $\mathrm{Aut}(M,I_{\alpha\beta})$ is the standard disconnected canonical divisor $(\{z_1=0\}\cup \{z_2=0\})/\la A\ra$.
	
	Claims (i) and (ii) follow directly from the explicit description of $\mathrm{Aut}(M,I_{\alpha\beta})$ in Proposition~\ref{prop:hopf_aut}. This concludes the proof of Proposition~\ref{prop:hopf_Q_sigma}.
\end{proof}

\begin{proof}[Proof of Theorem~\ref{thm:hopf_existence_even}]
	We first assume that $(M,I)$ is a (possibly secondary) Hopf surface, and let $(\til M,I)$ be its primary finite cover. Let $(M,g,I,J)$ be an even GK structure on a Hopf surface $(M,I)$, and $(\til M,g,I,J)$ the corresponding GK structure on the cover $\til M\to M$. Clearly $I$ is different from $\pm J$, since $(\til M,g,I)$ is not K\"ahler. Furthermore, since $H^2(\til M,\R)=0$, we observe that both symplectic forms $F_{\pm}=-2g(I\pm J)^{-1}$ cannot be defined entirely on $\til M$, hence both sets
	\[
	Z_{\pm}=\{x\in \til M\ |\ (I\pm J)_x=0\}
	\]
	are nonempty distinct proper analytic subsets.
	
	Let $\sigma_I\in \Gamma(\Lambda^2(T^{1,0}\til M))$ be the corresponding holomorphic Poisson tensor given by
	\[
	\sigma_I = \sigma-\sqrt{-1}I\sigma,\quad \sigma=\frac{1}{2}[I,J]g^{-1}.
	\]
	Then $\sigma_I$ vanishes on $Z_+\cup Z_-$, yielding a disconnected anticanonical divisor on $(\til M,I)$.  Using the same argument as in the proof of Proposition~\ref{prop:hopf_Q_sigma}, we observe that any holomorphic bivector on a non-diagonal Hopf surface (with $\lambda\neq 0$) is a complex multiple of $z_1^2\frac{\del}{\del z_1}\wedge \frac{\del}{\del z_2}$. Thus 
	the non-diagonal primary Hopf surfaces and their quotients do not admit disconnected anticanonical divisors, and cannot carry an even GK structure. Therefore $(\til M,I)$ is biholomorphic to a diagonal Hopf surface $(\til M,I_{\alpha\beta})$.
	
	It remains to figure out which free finite quotients of $(\til M,I_{\alpha\beta})$ admit an even GK structure. Let $\Gamma\subset \mathrm{Aut}(\til M,I_{\alpha\beta})$ be a finite group acting freely, and assume that the quotient $M=\til M/\Gamma$ endowed with the inherited complex structure $I_{\alpha\beta}$ admits an even GK structure $(M,g,I_{\alpha\beta},J)$. By abuse of notation, let $(\til M,g,I_{\alpha\beta},J)$ be the lift of this GK structure to $\til M$. Then $\Gamma$ acts biholomorphically on $(\til M,I_{\alpha\beta})$ preserving $g,I_{\alpha\beta}$ and $J$. In particular $\Gamma$ preserves the underlying Poisson tensor $\sigma_I$. By the first half of Proposition~\ref{prop:hopf_Q_sigma}, there is a biholomorphism
	\[
	\phi \colon (\til M,I_{\alpha\beta})\to (\til M, I_{\alpha\beta})
	\]
	such that $\phi_*\sigma_I=c\pmb\pi$, where $c\in\C^*$ is a nonzero constant and $\pmb\pi=z_1z_2 \frac{\del}{\del z_1}\wedge \frac{\del}{\del z_2}$. The group
	\[
	\Gamma_\phi=\{\phi\circ \gamma\circ\phi^{-1}\ |\ \gamma\in\Gamma\}
	\]
	preserves $\pmb\pi$, thus by the second half of Proposition~\ref{prop:hopf_Q_sigma}, $\Gamma_\phi$  is a subgroup of $(\C^*)^2/\la A\ra\subset \mathrm{GL_2}(\C)/\la A\ra$. Let $\til{\Gamma_{\phi}}$ be the lift of $\Gamma_{\phi}$ to $(\C^*)^2\subset \mathrm{GL}_2(\C)$. Following~\cite[(5.8)]{Bea} we observe that $\til{\Gamma_{\phi}}$ acts freely on $\C^2\backslash\{\mathbf{0}\}$, hence the projections $\til \Gamma_{\phi}\to (\C^*\times \{\mathbf 1\})$ and $\Gamma_{\phi}\to (\{\mathbf 1\}\times \C^*)/\la A\ra$ are injective. We deduce that $\til\Gamma_{\phi}$ is isomorphic to a subgroup of $\C^*$ acting freely and co-compactly on $\C^2\backslash\{\mathbf{0}\}$. Any such group $\til\Gamma_{\phi}$	
	must be of the form
	\[
	\til\Gamma_\phi=\Z_\ell\times \Z,
	\]
	where $\Z_\ell$ acts via multiplication by primitive $\ell^{\mathrm{th}}$ roots of 1:
	\[
	(z_1,z_2)\mapsto (\exp{\tfrac{2\pi \sqrt{-1}k}{\ell}}z_1, \exp{\tfrac{2\pi \sqrt{-1}}{\ell}}z_2)
	\]
	and $\Z$ acts via multiplication by a matrix $B=\left(\begin{matrix}
	\alpha_0 & 0 \\ 0 & \beta_0
	\end{matrix}\right)$ such that $\alpha_0,\beta_0\in\C$, $0<|\alpha_0|, |\beta_0|<1.$ 
	
	Thus the initial secondary Hopf surface $(M,I)$ is the $\Gamma$-quotient of $(\til M,I_{\alpha\beta})$ which is biholomorphic (via $\phi$) to the $\Gamma_\phi$-quotient of $(\til M,I_{\alpha\beta})$. Given the description of possible groups $\Gamma_{\phi}$, the latter is the $\Gamma_{k,\ell}\simeq\Z_\ell$-quotient of $(\C^2\backslash\{\mathbf 0\})/\la B\ra$, noting that  $B$ generates a $\mathbb Z$-action on $\mathbb C^2 \backslash(\mathbf 0\}$.  We conclude that $(M,I)$ is necessarily a $\Gamma_{k,\ell}$-quotient of a diagonal Hopf proving that only $\Gamma_{k,\ell}$-cyclic quotients of primary diagonal Hopf surfaces could admit even GK structures.
	
	Conversely, the construction of~\cite{st-us-19} yields an $(S^1)^3$-invariant even GK structure on every diagonal Hopf surface, which thus descends to all cyclic $\Z_\ell$-quotients, acting as above. This provides an even GK structure on the secondary Hopf surface $(\til M,I_{\alpha\beta})/\Gamma_{k,\ell}$.
\end{proof}

It is also interesting to determine which holomorphic Poisson tensors $\pmb\pi$ on $(M,I_{\alpha\beta})$ correspond to an even GK structure $(M,g,I_{\alpha\beta},J)$ with $\sigma:=\frac{1}{2}[I,J]g^{-1}=\Re(\pmb\pi)$. To this end, let us first state some basic facts about complex geometry of diagonal Hopf surfaces. Every Hopf surface $(M,I_{\alpha\beta})$ presented as $\C^2\backslash\{\mathbf{0}\}/\la A\ra$ has a distinguished holomorphic bivector
\begin{equation}\label{eq:hopf_poisson}
\pmb\pi:=z_1z_2\frac{\partial}{\partial z_1}\wedge \frac{\partial}{\partial z_2}
\end{equation}
such that its zero locus is the standard disconnected anticanonical divisor $E_1\cup E_2$:
\[
E_1=\{z_1=0\}/\la A\ra,\quad E_2=\{z_2=0\}/\la A\ra.
\]
Observe that $E_1\simeq\C^*/\la \alpha\ra$ and $E_2\simeq\C^*/\la \beta\ra$. Since $\dim_\C M=2$, the holomorphic $(2,0)$-vector $\pmb\pi$ is automatically Poisson. If $\sigma'_{I_{\alpha\beta}}$ is another nontrivial section of $\Lambda^2(T^{1,0}M)$ then the zero locus of $\sigma'_{I_{\alpha\beta}}$ is either an elliptic curve with multiplicity $\geq 2$ (this option is possible only when $\alpha^q=\beta$ or $\beta^q=\alpha$ for some $q\in \Z$), or again is a union of two elliptic curves $E_1'\cup E_2'$. From Proposition~\ref{prop:hopf_Q_sigma} in the latter case, there exists
\begin{equation}\label{eq:divisor_pullback}
\gamma\in \mathrm{Aut}(M,I_{\alpha\beta})\quad\mbox{such that}\quad\gamma(\{E_1,E_2\})=\{E_1',E_2'\}.
\end{equation}
Let $\mathrm{Aut}(M,I_{\alpha\beta},\pmb\pi)\subset \mathrm{Aut}(M,I_{\alpha\beta})$ be the subgroup of  automorphisms preserving $\pmb\pi$, which by Proposition~\ref{prop:hopf_Q_sigma}(ii) is isomorphic to $(\C^*)^2/\la A\ra$.
Observe that the group $\mathrm{Aut}(M,I_{\alpha\beta},\pmb\pi)\simeq (\C^*)^2/\la A\ra$ contains a unique maximal torus $K\simeq (S^1)^3$ with  Lie algebra $\mf k$:
\begin{equation} \label{f:Kdef}
\begin{split}
K&\subset \mathrm{Aut}(M,I_{\alpha\beta},\pmb\pi),\\
\mf k &= 
\mathrm{span}_\R\left(
\Im(z_1\frac{\partial}{\partial z_1}),\ 
\Im(z_2\frac{\partial}{\partial z_2}),\ 
\log|\alpha|\Re(z_1\frac{\partial}{\partial z_1})+\log|\beta|\Re(z_2\frac{\partial}{\partial z_2})
\right).
\end{split}
\end{equation}
This group will play an important role below in this section.

%

We close this subsection by showing a key geometric property of even generalized K\"ahler structures on Hopf surfaces. If $(M,g,I_{\alpha\beta},J)$ is an even GK structure, it determines a holomorphic Poisson tensor $\sigma_{I_{\alpha\beta}}$ (see~\eqref{eq:poisson_holo}). It turns out that up to action of  $\mathrm{Aut}(M,I_{\alpha\beta})$, $\sigma_{I_{\alpha\beta}}$ is actually a  \emph{real} multiple of $\pmb{\pi}$.  This apparent rigidity in the choice of the holomorphic Poisson tensor is in contrast to the case of GK structures on K\"ahler backgrounds.  We discuss this phenomenon further in \S \ref{s:SOGKS}.

\begin{thm}\label{thm:hopf_poisson_gk}
	Let $(M,g,I_{\alpha\beta},J)$ be an even generalized K\"ahler on a diagonal Hopf surface $(M,I_{\alpha\beta})$. Then up to the action of $\mathrm{Aut}(M,I_{\alpha\beta})$ the holomorphic Poisson tensor 
	\[
	\sigma_{I_{\alpha\beta}}=\sigma-\sqrt{-1}{I_{\alpha\beta}}\sigma,\quad \sigma:=\frac{1}{2}[I_{\alpha\beta},J]g^{-1}
	\]
	is given by
	\[
	\sigma_I=c\cdot z_1z_2\frac{\del}{\del z_1}\wedge\frac{\del}{\del z_2},
	\]
	where $c\in\R$ is a nonzero real number.
\end{thm}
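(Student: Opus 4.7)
The plan has two steps: first, normalize $\sigma_{I_{\alpha\beta}}$ to a complex multiple of $\pmb\pi := z_1 z_2\,\partial_{z_1}\wedge\partial_{z_2}$ using an automorphism of $(M,I_{\alpha\beta})$; second, show that the multiple must be real.

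For the first step, Proposition~\ref{prop:hopf_Q_sigma}(ii) immediately provides $\phi \in \mathrm{Aut}(M,I_{\alpha\beta})$ with $\phi^{*}\sigma_{I_{\alpha\beta}} = c\,\pmb\pi$ for some $c\in\C^{*}$. Replacing $(g,J)$ by $(\phi^{*}g,\phi^{*}J)$ reduces the statement to showing $c \in \R$. A preliminary pullback calculation, together with Proposition~\ref{prop:hopf_aut}, shows that the full stabilizer of $\pmb\pi$ in $\mathrm{Aut}(M,I_{\alpha\beta})$, namely the subgroup $(\C^{*})^{2}/\langle A\rangle$ of Proposition~\ref{prop:hopf_Q_sigma}(ii), fixes $\pmb\pi$ as a tensor; and the only additional transformation available (the swap $(z_1,z_2)\mapsto(z_2,z_1)$, present only when $\alpha=\beta$) merely sends $\pmb\pi \mapsto -\pmb\pi$. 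No further automorphism can adjust the argument of $c$, so reality must be a consequence of the bihermitian compatibility $\sigma = \tfrac{1}{2}[I_{\alpha\beta},J]g^{-1}$ itself.

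To extract that constraint, I would pass to the universal cover and work in logarithmic coordinates $w_j = \log z_j$ on $(\C^{*})^{2}\subset \C^{2}\setminus\{\mathbf 0\}$, in which $\pmb\pi = \partial_{w_1}\wedge \partial_{w_2}$ is constant and
\[
\sigma \;=\; \tfrac{c}{2}\,\partial_{w_1}\wedge\partial_{w_2} \;+\; \tfrac{\bar c}{2}\,\partial_{\bar w_1}\wedge\partial_{\bar w_2}.
\]
The identity $[I_{\alpha\beta},J] = 2\sigma g$ then determines the off-diagonal block of $J$ (with respect to the $I_{\alpha\beta}$-decomposition of $TM\otimes \C$) as an explicit expression in $c$, $\bar c$ and the Hermitian metric, while the remaining conditions $J^{2} = -\mathrm{Id}$, integrability of $J$, and $\langle A\rangle$-equivariance become a closed system for the diagonal block of $J$ and for $g$, in which the argument of $c$ enters as a parameter. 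The main obstacle is extracting from this system a global obstruction forcing $\Im(c) = 0$.

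A natural route is via residues of the real $2$-form $\sigma^{-1}$ (equivalently of $F_{\pm} = \sigma^{-1}J\mp\sigma^{-1}I$) along the elliptic components $E_1,E_2$ of the anticanonical divisor: in the logarithmic coordinates $\sigma^{-1}$ acquires simple logarithmic poles whose residues are holomorphic $1$-forms on the $E_i$ with argument prescribed by $c$, and matching this against the residue data intrinsically determined by the Hermitian structure on the normal bundles of $E_1,E_2$ should force $c\in\R$. A probably cleaner alternative is to compute $\sigma = \tfrac{1}{2}[I_{\alpha\beta},J]g^{-1}$ directly for the explicit $K$-invariant Example~\ref{ex:hopf_diagonal_gk}: the calculation collapses to $\sigma_{I_{\alpha\beta}} = c_0\,\pmb\pi$ for a positive real constant $c_0$ independent of the profile function $p$, and one could then combine this with a normal-form reduction (e.g.\ by $K$-averaging via the Hamiltonian flow construction of Section~\ref{sec:flow_construction}) placing any even GK structure on the diagonal Hopf surface into such a $K$-invariant gauge. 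Avoiding circular use of the rigidity results of Section~\ref{sec:aubin} in carrying out this reduction is the delicate technical point.
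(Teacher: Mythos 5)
Your first step --- using Proposition~\ref{prop:hopf_Q_sigma} to normalize $\sigma_{I_{\alpha\beta}}$ to $e^{\sqrt{-1}s}\pmb\pi$ and observing that the stabilizer of $\pmb\pi$ cannot rotate the phase --- is exactly how the paper begins, and is correct. The gap is in the second step: you correctly diagnose that reality of the phase must come from the bihermitian compatibility, and your ``residue along $E_1,E_2$'' idea points at the right objects, but you never identify the actual constraint, and neither of your two proposed routes closes it. The mechanism the paper uses is a \emph{double} computation exploiting the $I\leftrightarrow J$ symmetry: from the identity \eqref{eq:p_sigma} together with $\theta_I=\ast H=-\theta_J$ and $|\sigma_I|_g^2=1-p^2$, one finds that the restriction of $\theta_I^\sharp$ to $E_1$ (resp.\ $E_2$) is $\Re(e^{\sqrt{-1}s}z_2\partial_{z_2})$ (resp.\ $\Re(e^{\sqrt{-1}s}z_1\partial_{z_1})$), determined by $\sigma_I$ alone. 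One then repeats the identical computation on $(M,J)$, which is again a diagonal Hopf surface whose elliptic curves are the same $E_1,E_2$ but carry the conjugate complex structure on $E_1$ (since $J=-I$ there) and the same one on $E_2$ (since $J=I$ there). Comparing the two answers, the $E_1$-ratios are negative complex conjugates of each other while the $E_2$-ratios are plain negatives; consistency forces both to be real, hence $e^{\sqrt{-1}s}=\pm1$. A single ``residue'' computation from the $I$-side alone, as you propose, only returns the phase $e^{\sqrt{-1}s}$ and cannot constrain it --- the conjugation flip coming from the second complex structure is the essential missing ingredient.

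Your alternative route (reduce to the explicit $K$-invariant Example~\ref{ex:hopf_diagonal_gk} by some averaging) cannot be salvaged as stated: there is no a priori way to place an arbitrary even GK structure on a Hopf surface into $K$-invariant gauge without already knowing the $K$-invariance of solitons, which is precisely what Theorem~\ref{thm:hopf_soliton_uniqueness} establishes \emph{using} the present theorem. You flag this circularity yourself, and it is fatal to that branch of the argument.
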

\begin{proof}
	For convenience, throughout this proof we denote $I=I_{\alpha\beta}$. 
	As in the proof of Theorem~\ref{thm:hopf_existence_even}, 
	$Z_\pm=\{x\in M\ |\ (I\pm J)_x=0\}$ are two nonempty disjoint  components of the zero locus of $\sigma_I$. Since $\mathrm{Aut}(M,I)$ acts transitively on disconnected canonical divisors on $M$, after applying appropriate automorphism, we can assume that
	\[
	\{\sigma_I=0\}=\{\pmb\pi=0\}.
	\]
	Hence $\sigma_I$ is a constant \emph{complex} multiple of $\pmb\pi$. After scaling the metric $g$ and correspondingly the Poisson tensor $\sigma_I$, we can assume that
	\[
	\sigma_I=e^{\sqrt{-1}s}\pmb\pi,\quad s\in \R.
	\]
	Our goal is to prove that the \emph{phase} factor $e^{\sqrt{-1}s}$ must be real.
	
	Without loss of generality assume that $I+J=0$ on the elliptic curve $E_1:=\{z_1=0\}\subset M$ and $I-J=0$ on $E_2:=\{z_2=0\}\subset M$. Define $p=-\tfrac{1}{4}\tr(IJ)$. The smooth function $p$ is called the \emph{angle function}, and takes its values in $[-1,1]$, such that
	\[
	p^{-1}(-1)=E_1,\quad p^{-1}(1)=E_2.
	\]
	A general relation on an even nondegenerate 4-dimensional GK manifold~\cite[Lemma\,3.8]{ap-st-17} states that
	\begin{equation}\label{eq:p_sigma}
	\sigma \left(\frac{2dp}{1-p^2}\right)=\theta_I^\sharp-\theta_J^\sharp.
	\end{equation}
	Furthermore, we necessarily have $\theta_I=*d^c_I\omega_I=*H=-*d^c_J\omega_J=-\theta_J$. Using this observation and the relation $|\sigma_I|^2_g=(1-p^2)$, we can rewrite the above identity as
	\begin{equation}\label{eq:lee_canonical}
	\sigma(d\log|\sigma|^2_g)=-2p\theta_I^\sharp.
	\end{equation}
	Now, if we change metric $g$ on the line bundle $-K_M$ to any other Hermitian metric $g_{\varphi}:=e^\varphi g$, $\varphi\in C^\infty(M,\R)$ then the left hand side of~\eqref{eq:lee_canonical} will change by a summand $\sigma(d\varphi)$. Since $\sigma$ vanishes on $E_1\cup E_2\subset M$, it implies that the restriction of $p\theta_I^\sharp$ to the elliptic curves $E_i$ is uniquely determined by $\sigma_I$.
	Specifically, from~\eqref{eq:p_sigma} it follows that
	\[
	\theta_I^\sharp\Big|_{\til E_1}=\Re(e^{\sqrt{-1}s}z_2\frac{\del}{\del z_2}),\quad
	\theta_I^\sharp\Big|_{\til E_2}=\Re(e^{\sqrt{-1}s}z_1\frac{\del}{\del z_1})
	\]
	We are going to prove that $e^{\sqrt{-1}s}$ is real, so $\theta_I^\sharp$ restricted to $\{z_2=0\}$ equals $\pm\Re(z_1\frac{\del}{\del z_1})$ and being restricted to $\{z_1=0\}$ equals $\pm\Re(z_2\frac{\del}{\del z_2})$.	
	
	Let $\til E_i$ be the preimages of the elliptic curves $E_i$ in the universal cover $\til M$. Pick a generator $\gamma\in \pi_1(M)$ of the deck transformation group, and choose identifications $\zeta_2\colon (\til E_1,I)\simeq \C^*$ and $\zeta_1\colon (\til E_2,I)\simeq \C^*$ in such a way that $\gamma$ acts by multiplication with $\alpha$ and $\beta$ respectively. Then there exists a complex number $\xi\in\C$ such that, depending on the choice of the generator $\gamma\in\pi_1(M)$, either $\zeta_1=\xi z_1$ and $\zeta_2=\xi z_2$, or $\zeta_1=\xi z_1^{-1}$ and $\zeta_2=\xi z_2^{-1}$. Then either vector field $\theta_I^\sharp$ restricted to divisors $\til E_{1}$ and $\til E_{2}$ is given by $\Re(e^{\sqrt{-1}s}\zeta_i\frac{\del}{\del\zeta_i})$, or it is given by $\Re((-e^{\sqrt{-1}s})\zeta_i\frac{\del}{\del\zeta_i})$. In both cases, we have the following identity of complex numbers
	\begin{equation}\label{eq:hopf_poisson_gk_pf1}
	\frac{(\theta_I^\sharp)^{1,0}_I\Big|_{\til E_1}}{\zeta_2\,\frac{\del}{\del\zeta_2}}=\frac{(\theta_I^\sharp)^{1,0}_I\Big|_{\til E_2}}{\zeta_1\,\frac{\del}{\del\zeta_1}}
	\end{equation}
	
	Now, consider the same smooth manifold $M$ with respect to the other complex structure $J$. We will have $J=-I$ on $E_1=\{z_1=0\}$ and $J=I$ on $E_2=\{z_2=0\}$. We get holomorphic identifications:
	\[
	\bar\zeta_2\colon (E_1,J)\simeq \C^*,\quad \zeta_1\colon (E_2,J)\simeq \C^*.
	\]
	By the classification of complex structures on $S^3\times S^1$, see~\cite{kod-66}, $(M,J)$ is necessarily biholomorphic to a diagonal Hopf surface with parameters $\alpha$ and $\bar\beta$ being determined by the moduli of elliptic curves. Repeating the same calculation as above with $(M,J)$ 
	we get
	\begin{equation}\label{eq:hopf_poisson_gk_pf2}
	\frac{(\theta_J^\sharp)^{1,0}_J\Big|_{\til E_1}}{\bar\zeta_2\,\frac{\del}{\del\bar\zeta_2}}=\frac{(\theta_J^\sharp)^{1,0}_J\Big|_{\til E_2}}{\zeta_1\,\frac{\del}{\del\zeta_1}}.
	\end{equation}
	Now we observe that since $I=-J$ on $E_1$ and $\theta_I^\sharp=-\theta_J^\sharp$, the expressions on the left hand side of~\eqref{eq:hopf_poisson_gk_pf1} and~\eqref{eq:hopf_poisson_gk_pf2} are negative conjugate of each other
	\[
	\frac{(\theta_I^\sharp)^{1,0}_I\Big|_{\til E_1}}{\zeta_2\,\frac{\del}{\del\zeta_2}}=-\frac{(\theta_J^\sharp)^{1,0}_I\Big|_{\til E_1}}{\zeta_2\,\frac{\del}{\del\zeta_2}}=
	-\frac{\bar{(\theta_I^\sharp)^{1,0}_J\Big|_{\til E_1}}}{\zeta_2\,\frac{\del}{\del\zeta_2}}=
	-\bar{\left(\frac{(\theta_I^\sharp)^{1,0}_J\Big|_{\til E_1}}{\bar\zeta_2\,\frac{\del}{\del\bar\zeta_2}}\right)}
	\]
	At the same time, the expressions on the right hand side of~\eqref{eq:hopf_poisson_gk_pf1} and~\eqref{eq:hopf_poisson_gk_pf2} are just negative of each other, as $I=J$ on $E_2$.  Thus both must be real, forcing $e^{\sqrt{-1}s}$ to be $\pm 1$.
	\end{proof}

\subsection{Uniqueness of GKRS on Hopf surfaces}

Since we know that any diagonal Hopf surface is a part of a GK structure, it is next important to identify whether the set of all GK structures extending $(M,I_{\alpha\beta})$ has a distinguished representative. In~\cite{st-us-19} we established the following existence result for gradient steady GKRS:

\begin{thm}[{\cite[Theorem\,1.1]{st-us-19}}]\label{thm:hopf_soliton_existence}
	Let $(M, I_{\alpha\beta})$ be a diagonal 
	Hopf surface. Then there exists a Hermitian metric $g^s$ and $g^s$-orthogonal complex structures $J^s_1$ and $J^s_2$,  such that
	\begin{enumerate}
		\item $(M, g^s, I_{\alpha\beta}, J^s_1)$ is an odd-type gradient steady GKRS with $(M,J^s_1)$ biholomorphic to $(M,I_{\alpha\bar{\beta}})$.
		\item $(M, g^s, I_{\alpha\beta}, J^s_2)$ is an even-type gradient steady GKRS with $(M,J^s_2)$ biholomorphic to $(M,I_{\alpha\bar{\beta}})$.
	\end{enumerate}
	Moreover $g^s$ and $J_{i}^s$ are invariant under the maximal compact subgroup $K\subset \mathrm{Aut}(M,I_{\alpha\beta}, \pmb{\pi})$.
\end{thm}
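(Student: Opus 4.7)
The plan is to construct the solitons by symmetry reduction. Since $K \simeq (S^1)^3$ acts on $M$ with principal orbits of codimension one and the soliton system is $K$-invariant, I expect solutions to be determined by a scalar function of a single transverse coordinate, so the problem reduces to an ODE with prescribed endpoint behavior at the divisors $E_1, E_2$.

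First I would work on the universal cover $\C^2_{w_1,w_2}$ of $(\C^*)^2/\la A\ra \subset M \setminus (E_1 \cup E_2)$, with logarithmic coordinates $w_i = \log z_i = x_i + \sqrt{-1}\, y_i$ and $a=\log|\alpha|$, $b=\log|\beta|$. The torus $\mf k$ of \eqref{f:Kdef} acts by shifts in $(y_1, y_2)$ together with the real flow of $a\del_{x_1}+b\del_{x_2}$, so any $K$-invariant tensor depends only on the transverse variable $x := 2(bx_1/a - x_2)$. Using the one-parameter ansatz of Example \ref{ex:hopf_diagonal_gk}, I would parameterize $K$-invariant GK structures by a scalar function $p = p(x) \in (-1, 1)$ with $p = -\tfrac14 \tr(IJ)$, so that $p \to \pm 1$ corresponds to the limits onto $E_\pm$. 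The candidate soliton vector fields $X_I, X_J$ are forced to lie in $\mf k$ by $K$-invariance, and once $p$ is fixed they are determined by finitely many real parameters subject to the constraints of Proposition \ref{pr:XI_XJ_properties}.

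Next I would apply Proposition \ref{prop:gk_soliton_suff}. Under the ansatz, the symplectic forms $F_\pm$, the Hamiltonian potentials $\psi_\pm = \tfrac12 \log\det(I\pm J) - f$, and the Lee forms $\theta_I, \theta_J$ all become explicit functions of $p$, $p'$ and the soliton parameters. The scalar GKRS equation $e^{\psi_+} F_+^{\,2} = e^{\psi_-} F_-^{\,2}$ then reduces to an autonomous ODE in $p$, with a conserved quantity coming from $K$-invariance, so the system is effectively first order in $p$.

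The main technical step, which I expect to be the principal obstacle, is the resulting boundary value problem: find soliton parameters and initial data producing a smooth monotone $p : \R \to (-1, 1)$ with $p(\mp\infty) = \mp 1$, whose approach rates at $\pm \infty$ are precisely those required for $g$ and $J$ to extend smoothly across the elliptic curves with the prescribed eigenvalues $(\alpha, \beta)$. I would analyze the planar system in $(p, p')$, read off the admissible decay rates from the linearization at the equilibria $p = \pm 1$, and use a continuity or degree argument in the soliton parameters to produce a heteroclinic connection realizing the matching asymptotics at both ends. Once such a $p$ is constructed, $K$-invariance of the resulting structure is automatic; the identification $(M, J^s_i) \simeq (M, I_{\alpha\bar\beta})$ follows from the explicit $\Omega_J$ in Example \ref{ex:hopf_diagonal_gk} (the conjugation in $\beta$ reflects the swap $dw_2 \leftrightarrow d\bar w_2$ in the $J$-holomorphic frame), and the two consistent sign choices in $\Omega_J$ produce the odd and even branches $J^s_1, J^s_2$.
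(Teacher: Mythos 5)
This theorem is not proved in the present paper at all: it is imported verbatim from \cite[Theorem~1.1]{st-us-19}, so there is no in-paper argument to compare against. Your overall strategy for the \emph{even} branch --- $K$-symmetry reduction to the one-variable ansatz of Example~\ref{ex:hopf_diagonal_gk}, an ODE for $p(x)$ with $p\to\mp1$ at the two elliptic curves, and asymptotic matching so that $g$ and $J$ extend smoothly --- is indeed the shape of the construction in \cite{st-us-19}. But the proposal has genuine gaps.

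First, your reduction via Proposition~\ref{prop:gk_soliton_suff} and the scalar equation $e^{\psi_+}F_+^2=e^{\psi_-}F_-^2$ cannot touch case (1) of the theorem. For an odd-type structure on a surface, $I$ and $J$ commute, so $\sigma=\tfrac12[I,J]g^{-1}\equiv 0$ and $I\pm J$ each vanish identically on one summand of the splitting $T^{1,0}M=L_1\oplus L_2$; the forms $F_\pm$ are nowhere defined and the entire log-nondegenerate machinery is unavailable. Relatedly, the odd and even solitons are \emph{not} ``two consistent sign choices in $\Omega_J$'': the decomposable-form ansatz of Example~\ref{ex:hopf_diagonal_gk} produces only even-type $J$ (same orientation as $I$), while $J_1^s$ arises from the holomorphic splitting, with $J=I$ on one summand and $J=-I$ on the other, and requires a separate verification (via the Bismut--Ricci form as in Proposition~\ref{prop:soliton_equiv}) that the \emph{same} metric $g^s$ solves the soliton system --- a coincidence your scheme neither produces nor explains. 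Second, even in the even case, what you label ``the principal obstacle'' --- producing the monotone heteroclinic $p:\R\to(-1,1)$ with exactly the decay rates dictated by $(\alpha,\beta)$ at both ends --- is the entire content of the existence theorem, and you defer it to an unspecified continuity/degree argument. In the cited construction this step is resolved by exhibiting an essentially explicit first-order reduction of the ODE whose solution is monotone with the correct exponential rates; without carrying out that analysis (or an actual shooting argument with the asymptotics verified), the existence claim is not established.
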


Since the GK solitons of Theorem~\ref{thm:hopf_soliton_existence} are invariant under the group $K\subset \mathrm{Aut}(M,I_{\alpha\beta},\pmb\pi)$, each of them descends to a GK soliton on the secondary Hopf surfaces given by the quotient of $(M,I_{\alpha\beta})$ by a cyclic group $\Z_{\ell}\simeq \Gamma_{k,\ell}\subset K$ acting via multiplication by primitive roots of 1
\[
(z_1,z_2)\mapsto (\exp{\tfrac{2\pi \sqrt{-1}k}{\ell}}z_1, \exp{\tfrac{2\pi \sqrt{-1}}{\ell}}z_2).
\]
In the view of Theorem~\ref{thm:hopf_existence_even}, we see that every Hopf surface (primary or secondary) admitting a GK structure, also admits a gradient steady GK soliton. The purpose of this section is to establish the uniqueness counterpart of the existence statement, proving that these solitons are unique modulo the action of the automorphism group $\mathrm{Aut}(M,I_{\alpha\beta})$.

\begin{thm}\label{thm:hopf_soliton_uniqueness}
	Let $(M, I_{\alpha\beta})$ be a diagonal Hopf surface. Assume that there exists a gradient steady GKRS $(M,g,I_{\alpha\beta},J)$. Then
	\begin{enumerate}
		\item $g$ is invariant under the maximal compact subgroup $K\subset \mathrm{Aut}(M,I_{\alpha\beta}, \pmb{\pi})$;
		
		\item There exists an element $\gamma\in \mathrm{Aut}(M,I_{\alpha\beta})$ and a constant $c>0$ such that
		\[
		\gamma^*g=c\cdot g^s,\quad \gamma^*J=\pm J_i^s,
		\]
		where $g^s$ and $J_i^s$ are the Hermitian metric and the complex structures of Theorem~\ref{thm:hopf_soliton_existence}.
	\end{enumerate}
\end{thm}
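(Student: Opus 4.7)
The plan is to combine the normalization furnished by Theorem~\ref{thm:hopf_poisson_gk}, the location of the soliton vector fields in the Lie algebra of the maximal torus $K$ of Proposition~\ref{prop:hopf_aut}, and the rigidity of Theorem~\ref{thm:rigidite_soltions}, executing the argument in three stages.

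First, given a GKRS $(g, I_{\alpha\beta}, J)$ I would use Theorem~\ref{thm:hopf_poisson_gk} to choose $\gamma_0 \in \mathrm{Aut}(M, I_{\alpha\beta})$ bringing the holomorphic Poisson tensor into the normal form $\sigma_I = c\pmb{\pi}$ with $c\in\R^*$, and then scale $g$ so that $|c|=1$. The soliton vector fields $X_I, X_J$ of Proposition~\ref{prop:soliton_equiv} are Killing, commute, and preserve $I$, $J$, $g$, and $\sigma$ by Proposition~\ref{pr:XI_XJ_properties}; hence they preserve $\sigma_I = \pm\pmb{\pi}$ and, by Proposition~\ref{prop:hopf_Q_sigma}(ii), lie in $\mathrm{Lie}(\mathrm{Aut}(M,I_{\alpha\beta},\pmb{\pi}))$. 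Being Killing they belong to its compact form, the Lie algebra $\mathfrak{k}$ displayed in~\eqref{f:Kdef}.

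Second, let $G \subset K$ be the subtorus generated by $X_I, X_J$; the goal is to upgrade the automatic $G$-invariance of $(g,J)$ to full $K$-invariance. For $Y \in \mathfrak{k}$ consider the one-parameter subgroup $\phi_s := \exp(sY)$ and the path of pullbacks $(g_s, I_{\alpha\beta}, J_s) := \phi_s^*(g, I_{\alpha\beta}, J)$. Each member is a GKRS by the $\mathrm{Aut}(M, I_{\alpha\beta}, \pmb{\pi})$-equivariance of the soliton system~\eqref{eq:soliton_system}; since $K$ is abelian, $\phi_s^* X_I = X_I$ and $\phi_s^* X_J = X_J$, so the torus $G$ and the Hamiltonian potentials $\psi_{\pm}$ are preserved along the path. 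The path keeps $I$ fixed and deforms $J$, placing it in the generalized K\"ahler class obtained by swapping the roles of $I$ and $J$ in Construction~\ref{cstr:flow}; the convexity of Lemma~\ref{lm:2nd_variation} and the rigidity of Theorem~\ref{thm:rigidite_soltions} apply in this dual setting thanks to the symmetric role of $I$ and $J$ in Definition~\ref{def:gk}. The family is therefore trivial: $\phi_s^*g = g$ and $\phi_s^*J = J$ for all $s$ and every $Y \in \mathfrak{k}$, proving Part~(1).

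Third, for Part~(2) I would exploit the $K$-invariance to reduce the soliton to data on the one-dimensional quotient $M/K$. On $U_+ \cap U_-$ a $K$-invariant GK structure with Poisson tensor $\pm\pmb{\pi}$ is encoded by the $K$-invariant angle function $p := -\tfrac14 \tr(IJ)$ appearing in the proof of Theorem~\ref{thm:hopf_poisson_gk}, which reduces to a function of a single real variable (cf.\ Example~\ref{ex:hopf_diagonal_gk}). The soliton system~\eqref{eq:soliton_system} becomes a second-order ODE in $p$, and the boundary behavior $p(\pm\infty) = \pm 1$ forced by $E_\pm$ singles out $p$ up to the scaling of $g$ and the sign of $J$. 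Since $(g^s, J^s_i)$ of Theorem~\ref{thm:hopf_soliton_existence} is itself $K$-invariant and solves the same ODE, comparison yields the required $\gamma \in \mathrm{Aut}(M, I_{\alpha\beta})$ and constant $c>0$ in Part~(2).

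The main obstacle is justifying the dual rigidity used in the second step: one must express each $Y \in \mathfrak{k}$ as a (possibly singular) $\sigma$-Hamiltonian vector field so that the flow $\phi_s$ fits into the framework of a flow construction of GK structures with $I$ fixed and $J$ deformed. Since $H^1(M, \R) \neq 0$ on a Hopf surface, not every $\sigma$-preserving vector field is globally Hamiltonian; however, the log-nondegeneracy of the GK structure permits Hamiltonian potentials with logarithmic poles along $E_+ \cup E_-$, as discussed after Definition~\ref{def:hamiltonian_vf}. The technical heart of the argument is verifying that the closedness of $\pmb{\nu}_{X_I, X_J}$ (Proposition~\ref{prop:closed_1_form}) and the convexity formula of Lemma~\ref{lm:2nd_variation} remain valid when the flow potentials acquire such log-singularities, so that the rigidity of Theorem~\ref{thm:rigid_soltions} can be invoked in the dual class.
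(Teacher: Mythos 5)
Your overall skeleton (normalize the Poisson tensor via Theorem~\ref{thm:hopf_poisson_gk}, place the soliton fields in $\mf k$, upgrade to $K$-invariance by rigidity, then quote the $K$-invariant classification) matches the paper, but the central step --- proving full $K$-invariance --- contains a genuine gap that you flag but do not fill, and which the paper resolves by a different device. Your plan is to take an arbitrary $Y\in\mf k$, flow by $\exp(sY)$, and invoke a ``dual'' rigidity theorem for the resulting path $(g_s,I,J_s)$. For the third generator of $\mf k$ (the one involving $\log|\alpha|\Re(z_1\tfrac{\del}{\del z_1})+\log|\beta|\Re(z_2\tfrac{\del}{\del z_2})$), the $\sigma$-Hamiltonian potential is $\log|\alpha|\log|z_2|-\log|\beta|\log|z_1|$, which has logarithmic poles along $E_1\cup E_2$. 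So your argument requires re-proving the closedness of $\pmb\nu_{X_I,X_J}$ (Proposition~\ref{prop:closed_1_form}), the convexity formula (Lemma~\ref{lm:2nd_variation}), and the rigidity (Theorem~\ref{thm:rigid_soltions}) for flow potentials with such singularities --- in particular justifying the integration by parts across the degeneracy divisor. None of this is done, and it is precisely the hard part; the variational framework of Section~\ref{sec:aubin} is built only for $\fp\in C^\infty(M,\R)^G$.

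The paper avoids extending the framework to singular potentials entirely. It constructs \emph{two} vector fields: $Y_I\in\mf k$, which is $I$-holomorphic with the log-singular potential $h_{Y_I}=\log|\alpha|\log|z_2|-\log|\beta|\log|z_1|$, and an analogous $J$-holomorphic $Y_J$ built from coordinates $w_1,w_2$ adapted to $(M,J)\simeq(M,I_{\alpha\bar\beta})$ with potential $h_{Y_J}$. The key computation is that the \emph{difference} $h_{Y_I}-h_{Y_J}$ extends to a globally smooth function on $M$, because $|w_1|^2/|z_1|^2=|\sigma_I/z_1|^2_g\,/\,|\sigma_J/w_1|^2_g$ is smooth and nonvanishing near $\{z_1=0\}=\{w_1=0\}$ (using $|\sigma_I|^2_g=|\sigma_J|^2_g$). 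Then Lemma~\ref{lm:vf_vanish} --- which applies the rigidity of Theorem~\ref{thm:rigid_soltions} to the flow generated by this \emph{smooth} potential --- forces $Y_I=Y_J$, producing the third jointly holomorphic Killing field; linear independence from $X_I,X_J$ is then checked using part (7) of Proposition~\ref{pr:XI_XJ_properties}. This cancellation-of-singularities trick is the idea missing from your proposal. Two further omissions: you do not treat the odd-type case (where $\sigma=0$ and the whole log-nondegenerate machinery is unavailable; the paper disposes of it by citing \cite[\S 5]{st-us-19}), and in Part (2) the final identification of $J$ should go through Lemma~\ref{lm:4DGK} together with pinning down the scale and sign of $\sigma$ via $\sup_M|\sigma|^2_g=1$ and \eqref{eq:p_sigma}, rather than a fresh ODE analysis, though your sketch there is at least morally consistent with \cite{st-us-19}.
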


Before proving Theorem \ref{thm:hopf_soliton_uniqueness}, we show two preliminary technical lemmas.  The first is a general fact stating that four-dimensional even generalized K\"ahler structures are determined uniquely by the metric, one of the complex structures, and the Poisson tensor $\gs$, unless the structure is hyperK\"ahler.

	\begin{lemma} \label{lm:4DGK}
		Given a connected 4-dimensional Hermitian manifold $(M^4,g,I)$ which is not hyperK\"ahler and a real Poisson tensor $\sigma \neq 0$, then there exists at most one complex structure $J$ extending $(M,g,I)$ to an even generalized K\"ahler structure with $\sigma=\frac{1}{2}[I,J]g^{-1}$.
	\end{lemma}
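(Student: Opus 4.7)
The plan is to prove the lemma in three stages: (i) a purely algebraic pointwise step that determines $J$ from $(g,I,\sigma)$ up to a two-fold sign ambiguity; (ii) a global first-order argument using the Lee form identity of Lemma~\ref{lm:lee_identity} to force $\theta_I=0$ wherever two distinct candidates coexist; and (iii) a holonomy-based contradiction with the non-hyperK\"ahler hypothesis.

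For the pointwise step, I would introduce $p := -\tfrac{1}{4}\tr(IJ)$ and decompose $J = pI + \sqrt{1-p^2}\,K$ with $K$ a $g$-orthogonal complex structure anti-commuting with $I$. This yields $IJ + JI = -2p\,\Id$, which combined with $[I,J] = 2\sigma g$ gives $IJ = -p\,\Id + \sigma g$, whence
\[
J \;=\; -I \cdot (IJ) \;=\; pI - I(\sigma g), \qquad p^2 = 1 - |\sigma|_g^2.
\]
Thus at every point $(g, I, \sigma)$ determines $J$ up to the sign of $p$: two even-GK extensions $J_1, J_2$ must satisfy pointwise either $J_1 = J_2$ or $J_2 = J_1 - 2p_1 I$ (the latter corresponding to $p_2 = -p_1$).

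Arguing by contradiction, I would suppose $J_1 \neq J_2$ as smooth tensors, and let $V$ be a connected component of the open set $\{x : J_1(x) \neq J_2(x)\}$; continuity gives $p_2 \equiv -p_1$ throughout $V$. In dimension four the even GK identity $\ast H = \theta_I$ together with $H = d^c_I \omega_I = -d^c_{J_i}\omega_{J_i}$ forces $\theta_{J_i} = -\theta_I$ for $i=1,2$. Applying Lemma~\ref{lm:lee_identity} gives $\theta_I^\sharp - \theta_{J_i}^\sharp = \sigma(d\Phi_i)$ with $\Phi_i = \tfrac{1}{2}\log\bigl(\det(I-J_i)/\det(I+J_i)\bigr)$, and the 4D pointwise computation $\det(I \pm J_i) = 4(1 \pm p_i)^2$ yields $\Phi_2 = -\Phi_1$ on $V$. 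Therefore
\[
2\theta_I^\sharp \;=\; \sigma(d\Phi_1) \;=\; -\sigma(d\Phi_2) \;=\; -2\theta_I^\sharp \quad \text{on } V,
\]
so $\theta_I \equiv 0$ on $V$ and $(V, g, I)$ is K\"ahler.

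Finally, $H = d^c_I\omega_I = 0$ on $V$, so the GK relation makes $(V, g, J_1)$ K\"ahler too. I split into cases. Case (a): if $V \cap \{\sigma \neq 0\} \neq \emptyset$, then on this open subset $|p_1| < 1$ and $J_1 \neq \pm I$, so $g$ admits two parallel linearly independent K\"ahler complex structures; their K\"ahler forms span a $2$-plane in the rank-3 bundle $\Lambda^+$, so the Riemannian holonomy of $g$ fixes all of $\Lambda^+$, reducing to $SU(2)$ and making $g$ hyperK\"ahler, contradicting the hypothesis. Case (b): if $V \subset \{\sigma = 0\}$, then by the formula of step (i) $J_i = \pm I$ on $V$, and smoothness plus connectedness together with $J_1 \neq J_2$ force $J_1 \equiv I$ and $J_2 \equiv -I$ on $V$; since $\sigma \not\equiv 0$ globally, $V \subsetneq M$ and $\partial V \neq \emptyset$, but on $\partial V$ continuity from $V$ gives $J_1 = I$, $J_2 = -I$ simultaneously with $J_1 = J_2$ (the defining property of $V$), forcing $I = -I$, absurd. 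The principal technical subtlety will be in step (a): extending the local hyperK\"ahler conclusion on an open subset of $M$ to a contradiction with the hypothesis on $(M, g, I)$, which proceeds via the standard observation that two parallel linearly independent sections of $\Lambda^+$ reduce the Riemannian holonomy of $g$ into $SU(2)$.
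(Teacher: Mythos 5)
Your proposal follows essentially the same strategy as the paper's proof: determine $J$ pointwise from $(g,I,\sigma)$ up to the sign of the angle function $p=-\tfrac14\tr(IJ)$, use the Lee form identity together with $\theta_J=-\theta_I$ to control $dp$, and conclude with a hyperK\"ahler contradiction. Steps (i) and (ii) are correct. The gap is in step (iii), case (a), and you have in fact flagged it yourself without resolving it: your argument is carried out on a connected component $V$ of $\{J_1\neq J_2\}$ and yields only that $g$ is hyperK\"ahler \emph{on $V$}. Unless $V=M$, this does not contradict the hypothesis that $(M,g,I)$ is not hyperK\"ahler --- a metric can carry a parallel orthogonal pair of complex structures on a proper open subset without being globally hyperK\"ahler --- and the ``standard observation'' about parallel sections of $\Lambda^+$ that you invoke only reproduces the local conclusion; it does not extend it to $M$. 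So the scenario $\emptyset\neq V\subsetneq M$ is not excluded by what you wrote.

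There are two ways to close this. The paper's route is to observe that the identity $\sigma\left(\tfrac{2dp}{1-p^2}\right)=\theta_I^\sharp-\theta_J^\sharp=2\theta_I^\sharp$, valid on the dense open set where $\sigma$ is invertible, determines $dp$ globally from $(g,I,\sigma)$ alone; hence $p_1-p_2$ is constant on all of the connected manifold $M$, and the pointwise dichotomy $p_1=\pm p_2$ becomes a global one. Either the constant is zero and $J_1\equiv J_2$, or $p_1\equiv -p_2$ everywhere, in which case each $p_i$ is a global constant, $\theta_I\equiv 0$, and the hyperK\"ahler structure is defined on all of $M$, genuinely contradicting the hypothesis. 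Alternatively, you can finish within your own setup: on $V$ both $I$ and $J_1$ are $\nabla$-parallel, so $p_1=-\tfrac14\tr(IJ_1)$ is constant on $V$; if $V\neq M$ then $\partial V\neq\emptyset$, and there $p_1=p_2=-p_1$, i.e. $p_1=0$, forcing $p_1\equiv 0$ on $V$ and hence $J_1=J_2$ on $V$, contradicting the definition of $V$; if $V=M$ your holonomy argument does apply. Either patch is short, but as written the proof is incomplete.
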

	\begin{proof} As $\sigma\neq 0$, any $J$ satisfying the hypothesis will define a GK structure $(g, I, J)$ of even type, and furthermore $J\neq \pm I$.
	Notice that the tensors $g$ and $\sigma$ uniquely determine the tensor $[I,J]$.  At a given point $x\in M$,  there is an $S^2$-worth of complex structures compatible with $g$ and the orientation induced by  $I$.  Let $p:=-1/4\tr(IJ)$ be the angle function. In the 4-dimensional case we have $IJ+JI=-2p\,\mathrm{Id}$, thus the triple $p,I,[I,J]$ uniquely determines $J$, so it remains to identify $p$. Since $|[I,J]|_g=1-p^2$, the metric $g$ and Poisson tensor $\sigma$ determine $p$ up to sign. There is also a general relation on a nondegenerate GK $4$-manifold~\eqref{eq:p_sigma}:
		\begin{equation*}
		\sigma \left(\frac{2dp}{1-p^2}\right)=\theta_I^\sharp-\theta_J^\sharp.
		\end{equation*}
		Since on an even GK 4-manifold $\theta_I=*_gH=-\theta_J$, and the subset of points where  $\sigma=\frac{1}{2}[I, J]g^{-1}=0$   is an analytic subset of $(M, I)$, this allows to recover $dp$ from $\sigma,g,I$. In other words, if $J_1$ and $J_2$ are two different complex structures satisfying the hypothesis of the lemma, the corresponding angle functions $p_1$ and $p_2$ must defer by an additive constant and satisfy $p_1 =  \pm p_2$ at any point.  If $p_1 = - p_2$ everywhere, then $d p_i = 0$, and then $(I, J_1, J_2)$ determine a hyperK\"ahler structure compatible with $g$, a contradiction.  Thus there is a point where $p_1 = p_2$, and it then follows that $p_1 \equiv p_2$, thus $J_1\equiv J_2$ by the argument above.
\end{proof}

Next, in the course of the proof we will show that $(M,I,\sigma_I)$ and $(M,J,\sigma_J)$ have explicit real holomorphic vector fields $Y_I$ and $Y_J$ preserving $\sigma=\Re(\sigma_I)=\Re(\sigma_J)$, admitting Hamiltonian potentials on $U_{\pm}$ with logarithmic singularities along $M\backslash U_{\pm}$. We will further show that the difference of such potentials extends to a smooth function on $M$, and then Lemma~\ref{lm:vf_vanish} below will yield that $Y_I=Y_J$, and we thus obtain a jointly $I$- and $J$-holomorphic Killing vector field. This is the key step where the variational approach to GK solitons and the weighted $\pmb J$-functional of Section~\ref{sec:aubin} enter the proof of Theorem~\ref{thm:hopf_soliton_uniqueness}.

\begin{lemma}\label{lm:vf_vanish}
	Let $(M,g,I,J)$ be a compact log-nondegenerate gradient steady GKRS. Denote by $G\subset \mathrm{Aut}(g,I,J)$ the group generated by the soliton vector fields $X_I,X_J$.  Suppose there exist $G$-invariant vector fields $Y_I, Y_J$ preserving $\gs$ such that $Y_I$ is $I$-holomorphic, $Y_J$ is $J$-holomorphic, and $Y_J - Y_I$ is $\gs$-Hamiltonian.  Then $Y_I = Y_J$.
\end{lemma}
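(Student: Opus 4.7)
The approach will use the rigidity Theorem~\ref{thm:rigid_soltions} for GKRS. Since $Y_J - Y_I$ is $\sigma$-Hamiltonian and both $Y_I, Y_J$ are $G$-invariant, there exists a $G$-invariant smooth function $\fp \in C^\infty(M,\R)^G$ (unique modulo an additive constant) with $-\sigma(d\fp) = Y_J - Y_I$; the $G$-invariance follows because $\sigma$ is $G$-invariant and the potential is determined up to a constant. The goal reduces to showing $\fp$ is constant, since then $Y_J - Y_I = -\sigma(d\fp) = 0$ identically.

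With $\fp$ in hand, the plan is to consider the constant-speed flow construction $(g_t, I_t, J)$ in $\mc M^G$ generated by $\fp_t \equiv \fp$, starting at the original GKRS. The central claim will be that this family consists entirely of GKRS sharing the original soliton vector fields $X_I, X_J$. Granted this, Theorem~\ref{thm:rigid_soltions} forces the family to be stationary: $(g_t, I_t, J) \equiv (g_0, I_0, J)$ for all $t$. Then the flow construction equation $\dot I_t = \mc L_{X_\fp} I_t = \sigma(dd^c_{I_t}\fp)$ implies $dd^c_I \fp = 0$ on the dense open set $U_+ \cap U_-$, and hence on all of $M$ by continuity. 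Since $M$ is compact and $\Ker(dd^c_I|_{C^\infty(M,\R)}) = \R$, the potential $\fp$ must be constant, completing the argument.

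The main obstacle is verifying that $(g_t, I_t, J)$ is a family of GKRS. By Lemma~\ref{l:flowinvarance}, the vector fields $X_I$ and $X_J$ remain respectively $I_t$- and $J$-holomorphic Killing fields along the flow. What remains is to invoke Proposition~\ref{prop:gk_soliton_suff}: one must check that $X_I \pm X_J$ remain $F_{\pm,t}$-Hamiltonian with the evolved potentials $\psi_{\pm,t}$ given by Lemma~\ref{lm:flow_volume_variation}, and that the soliton identity $e^{\psi_{+,t}} F_{+,t}^n = e^{\psi_{-,t}} F_{-,t}^n$ is preserved along the flow. The essential structural input is the decomposition $X_\fp = Y_J - Y_I$ as a sum of $J$-holomorphic and $I$-holomorphic pieces, both preserving $\sigma$: combined with the identities $F_+ - F_- = -2\sigma^{-1}I$ and $F_+ + F_- = 2\sigma^{-1}J$, this yields enough compatibility between the Lie derivatives $\mc L_{Y_I}, \mc L_{Y_J}$ and the symplectic forms $F_{\pm}$ to maintain the soliton equation at all times.
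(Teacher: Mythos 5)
Your overall strategy---produce a one-parameter family of GKRS inside $\mc M^G$ and invoke the rigidity Theorem~\ref{thm:rigid_soltions} to force $\fp$ to be constant---is the right one, and your endgame (stationarity $\Rightarrow dd^c_I\fp=0$ on $U_+\cap U_-$ $\Rightarrow \fp$ constant $\Rightarrow Y_I=Y_J$) is fine. But the step you yourself flag as ``the main obstacle'' is precisely the content of the lemma, and your proposed resolution does not close it. If you start from the flow construction generated by $\fp$, the family automatically lies in $\mc M^G$, but there is no reason it should consist of GKRS: by Lemma~\ref{lm:flow_volume_variation} the two volume forms $e^{\psi_{\pm}}F_{\pm}^n/n!$ evolve by $\mp\, d\bigl(e^{\psi_\pm}\, I d\fp\wedge F_\pm^{n-1}/(n-1)!\bigr)$, so preservation of the soliton identity $e^{\psi_+}F_+^n=e^{\psi_-}F_-^n$ is a nontrivial PDE on $\fp$ that fails for generic potentials (if it held for every $G$-invariant $\fp$, then by Theorem~\ref{thm:rigid_soltions} every flow construction out of a GKRS would be stationary, which is absurd). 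The appeal to ``enough compatibility'' between $\mc L_{Y_I},\mc L_{Y_J}$ and $F_\pm$ is a gesture, not an argument, and I do not see how to make it work for the path you chose.

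The paper closes this gap by building the family in the opposite direction: it defines $(g_t,I_t,J):=(\Psi_{Y_J,t}^*g,\Psi_{Y_J,t}^*I,J)$ as the pullback by the flow $\Psi_{Y_J,t}$ of $Y_J$, which is \emph{manifestly} a family of GKRS (pullback of a GKRS by diffeomorphisms, with $J$ and $\sigma$ fixed because $Y_J$ is $J$-holomorphic and preserves $\sigma$), and then shows that this family is realized by the Hamiltonian flow construction with the \emph{time-dependent} potentials $\fp_t=\Psi_{Y_J,t}^*\fp$: since $Y_{I_t}:=\Psi_{Y_J,t}^*Y_I$ is $I_t$-holomorphic and $Y_J-Y_{I_t}=X_{\fp_t}$, one gets $\mc L_{X_{\fp_t}}I_t=\mc L_{Y_J}I_t=\frac{d}{dt}I_t$, and $g_t$ is then forced because it is determined by $(\sigma,I_t,J)$. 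Only then does rigidity apply. Note also that your constant-speed path is not the correct one even granting the pullback idea: the pullback family corresponds to $\fp_t=\Psi_{Y_J,t}^*\fp$, which is independent of $t$ only if $Y_J\cdot\fp=0$, something you do not know a priori (it would essentially require $[Y_I,Y_J]=0$ in advance). So the missing idea is the identification of the pullback-by-$Y_J$ family with a flow-construction path; without it the proof does not go through.
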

\begin{proof}
	Let $\Psi_{Y_J,t}$ be the one-parameter family of diffeomorphisms generated by $Y_J$, and consider the one-parameter family of steady gradient GKRS $(M,g_t,I_t,J_t)$ given by pullback by $\Psi_{Y_J,t}$, i.e.
	\begin{equation}\label{eq:pf_pullback_yj}
	g_t = \Psi_{Y_J,t}^* g, \quad \sigma_t= \Psi_{Y_J,t}^* \sigma = \gs,\quad I_t=\Psi_{Y_J,t}^*I,\quad J_t=\Psi_{Y_J,t}^*J=J,
	\end{equation}
        where we have used that $Y_J$ is $J$-holomorphic and preserves $\gs$.  By assumption $Y_J - Y_I$ is $\gs$-Hamiltonian, and so we can express
	\begin{align*}
	Y_J - Y_I = X_{\fp}.
	\end{align*}
	Now let $\fp_t=\Psi_{Y_J,t}^*\fp$, and $Y_{I_t}:= \Psi_{Y_J,t}^*Y_I$ be the pull backs under the diffeomorphism $\Psi_{Y_J,t}$. Then, since $Y_J$ preserves $\sigma$, for any $t$ we have
	\[
	Y_J-Y_{I_t} = X_{\fp_t}.
	\]
	
	We claim that the family of GK structures $(M,g_t,I_t,J)$ is also given by the Hamiltonian flow construction with the family of potentials $\fp_t$. 
	Indeed, along the flow construction we have
	\[
	\frac{d}{dt} I_t=\mc L_{X_{\fp_t}}I=\mc L_{Y_J-Y_{I_t}}I_t=\mc L_{Y_J}I_t,
	\]
	since $Y_{I_t}$ is $I_t$-holomorphic.  Thus the evolution of $I_t$ under the flow construction generated by $\fp_t$ coincides with the deformation given by~\eqref{eq:pf_pullback_yj}. As the metric $g_t$ is uniquely determined by $\gs$, $I$, and $J$, it follows that $g_t$ also evolves according to the flow construction as in (\ref{f:flowconst}).  Thus the family $(M,g_t,I_t,J_t)$ of gradient steady GKRS lies in $\mc M^G$.  We can now apply the rigidity statement of Theorem~\ref{thm:rigid_soltions} and conclude that $\fp_t$ is constant, so $X_{\fp}=0$ and $Y_I = Y_J$.
\end{proof}

\begin{rmk}
	There is no non-trivial analogue of this result in the $\sigma$-nondegenerate setting, since $(M,I,\sigma_I^{-1})$ is holomorphic symplectic, and its automorphism group $\mathrm{Aut}(M,I,\sigma)$ is discrete. This shows that the modification of the $\pmb J$-functional of~\cite{ap-st-17} to the log-nondegenerate setting is essential.
\end{rmk}

\begin{proof}[Proof of Theorem~\ref{thm:hopf_soliton_uniqueness}]
	Let $(M,g,I_{\alpha\beta},J)$ be a gradient steady GKRS on a diagonal Hopf surface $(M,I_{\alpha\beta})$. For notational convenience, throughout this proof we denote
	\[
	I=I_{\alpha\beta}.
	\]
	There are two essentially different cases. First, the case of $(M,g,I,J)$ being of odd type (i.e. $I$ and $J$ induce opposite orientations). In this case $I$ and $J$ commute, and operator $Q=IJ$ defines a holomorphic splitting of $T^{1,0}M$ into subbundles $L_1=\ker(\mathbf{1}-Q)$ and $L_2=\ker(\mathbf{1}+Q)$. By Proposition~\ref{prop:hopf_Q_sigma}, there is a biholomorphism $\phi\colon (M,I)\to (M,I)$ such that $\phi^*Q=Q_{\mathrm{std}}$ is the standard holomorphic splitting. In this case it is proved in~\cite[\S 5]{st-us-19} that $(M,g,I,J)$ is unique up to the action of the automorphism group $\mathrm{Aut}(M,I)$.
	
	Thus from now on we assume that $(M,g,I,J)$ is of even type. As in the proof of Theorem~\ref{thm:hopf_existence_even}, the corresponding holomorphic Poisson tensor
	\[
	\sigma_I = \sigma-\sqrt{-1}I\sigma,\quad \sigma=\frac{1}{2}[I,J]g^{-1}
	\]
	vanishes on $Z_+\cup Z_-$, which is a disjoint union of two elliptic curves in $(M,I)$. By Proposition~\ref{prop:hopf_Q_sigma}, we can find an automorphism $\gamma\in\mathrm{Aut}(M,I)$ such that $\gamma(Z_+\cup Z_-)=E_1\cup E_2$, where $E_i=\{z_i=0\}/\la A\ra$ are the standard elliptic curves in $(M,I)$. Furthermore by Theorem~\ref{thm:hopf_poisson_gk}, $\gamma^*\sigma_I$ is a \emph{real} multiple of $\pmb{\pi}$.  Hence we pull back the entire structure $(M,g,I,J)$ via $\gamma$, and change $J$ to $-J$, if necessary, so that $\sigma_I=\lambda\pmb\pi$,  $\lambda\in\R$ and
	\[
	Z_+=\{x\in M\ |\ (I+J)_x=0\}=E_1=\{z_1=0\}/\la A\ra
	\]
	\[
	Z_-=\{x\in M\ |\ (I-J)_x=0\}=E_2=\{z_2=0\}/\la A\ra
	\]
	
	
	With the use of Lemma~\ref{lm:4DGK}, to prove the theorem, it suffices to uniquely determine $g$ and $\sigma$. In~\cite{st-us-19} we proved the uniqueness of a $K$-invariant Hermitian metric $g$ solving the soliton system~\eqref{eq:soliton_system} modulo scalings and the action of $\mathrm{Aut}(M,I,\pmb\pi)$,
	where  $K$ is the maximal compact group of automorphisms given by (\ref{f:Kdef}). Thus to prove the uniqueness of $g$ it remains to prove that the soliton metric $g$ must be invariant under the action of $K$.
	
	As before, let
	\[
		X_I:=\frac{1}{2}I(\theta_I^\sharp-\nabla f),\quad X_J:=\frac{1}{2}J(\theta_J^\sharp-\nabla f).
	\]
	be the vector fields associated with the soliton $(M,g,I,J)$. It follows from the discussion in Section~\ref{sec:gks} that $X_I+X_J$ is $F_+$-Hamiltonian on $U_+=M\backslash Z_+$ with the Hamiltonian potential $\psi_+=-f+\frac{1}{2}\log\det(I+J)$. Since $M$ is compact, and $\psi_+$ goes to $-\infty$ near $Z_+$, function $\psi_+$ attains a maximum, ensuring that vector field $X_I+X_J$ vanishes at some point in $U_+$, but not identically. The same argument shows that $X_I-X_J$ vanishes at some point in $U_-$.
	
	Since $X_I+X_J$ belongs to $\mf k$ and has a zero on $U_+$, it could only be a nonzero multiple of $\Im(z_2\frac{\partial}{\partial z_2})$, similarly $X_I-X_J$ must be a nonzero multiple of $\Im(z_1\frac{\partial}{\partial z_1})$. Hence
	\[
	\mathrm{span}_\R(X_I,X_J)=\mathrm{span}_\R\left(
	\Im(z_1\frac{\partial}{\partial z_1}),\ 
	\Im(z_2\frac{\partial}{\partial z_2})
	\right)
	\]
	is a real 2-dimensional subspace in $\mf k$. To prove that $g,I,J$ are invariant under that action of $K\simeq (S^1)^3$, it remains to find a third vector field in $\mf k$, preserving $I,J$ and $g$.
	
	Since $\sigma_I$ is a multiple of $z_1z_2\frac{\partial}{\partial z_1}\wedge \frac{\partial}{\partial z_2}$, there exists a vector field (here even a complex multiple would suffice), there exists a vector field
	\[
	Y_I \in \mathrm{span}_{\R} \left\{ \log|\alpha|\Im(z_1\tfrac{\partial}{\partial z_1})+ 
	\log|\beta|\Im(z_2\tfrac{\partial}{\partial z_2}),\ 
	\log|\alpha|\Re(z_1\tfrac{\partial}{\partial z_1})+\log|\beta|\Re(z_2\tfrac{\partial}{\partial z_2}) \right\} \subset \mf k
	\]
	such that $Y_I$ preserves $\sigma_I$ and has $\Re(\sigma_I)$-Hamiltonian potential on $U_+\cap U_-=M\backslash(Z_1\cup Z_2)$
	\[
	h_{Y_I}=\log|\alpha|\log|z_2|-\log|\beta|\log|z_1|.
	\]
	Now, we can swap the roles of $I=I_{\alpha\beta}$ and $J$ and remember that $(M,J)$ must be also a Hopf surface with parameters $(\alpha,\bar\beta)$ (which is determined by the moduli of the elliptic curves $E_1$ and $E_2$, keeping in mind that $I_{\alpha\beta}=-J$ on $E_1$ and $I_{\alpha\beta}=J$ on $E_2$). There is a $J$-holomorphic Poisson tensor
	\[
	\sigma_J=\Re(\sigma_I)-\sqrt{-1}J\Re(\sigma_I)
	\]
	on $(M,J)$. Similarly, we find a vector field $Y_J$ in the Lie algebra of the maximal compact subgroup $(S^1)^3$
	of $\mathrm{Aut}(M,J,\sigma_J)\simeq (\C^*)^2/\Z$, which preserves $\sigma_J$ and has a Hamiltonian potential
	\[
	h_{Y_J}=\log|\alpha|\log|w_2|-\log|\beta|\log|w_1|,
	\]
	where $w_1,w_2$ are holomorphic coordinates of the universal cover $(\widetilde{M},J)\simeq \C^2\backslash\{\mathbf{0}\}$ such that $\sigma_J$ is proportional to $w_1w_2\frac{\partial}{\partial w_1}\wedge \frac{\partial}{\partial w_2}$. 
	
	We claim that the vector field $Y_I-Y_J$ has a global smooth $\sigma$-Hamiltonian potential on $M$. We will prove that locally defined functions $\log|z_1|-\log|w_1|$ and $\log|z_2|-\log|w_2|$ extend to smooth functions across the set $Z_{\pm}$. Indeed, since $\sigma_I$ is a holomorphic section of the anticanonical bundle with the simple zero along the divisor $E_1\cup E_2$, section $\sigma_I/z_1$ extends to a local nonvanishing section in a neighbourhood of $\{z_1=0\}$.
	
	Analogously $\sigma_J/w_1$ is a holomorphic nonvanishing section in a neighbourhood of $\{w_1=0\}$. It remains to note that if we choose the metrics on $K_{M,I}$ and $K_{M,J}$ induced by the Hermitian metric $g$, then $|\sigma_J|^2_g=|\sigma_I|^2_g$, since $\sigma_I$ and $\sigma_J$ have a common real part. Therefore
	\[
	\frac{|w_1|^2}{|z_1|^2}=\frac{|\sigma_I/z_1|^2_g}{|\sigma_J/w_1|^2_g}
	\]
	is a well-defined nonvanishing function in a neighbourhood of a point on $\{z_1=0\}=\{w_1=0\}$. The same argument works for $|w_2|^2/|z_2|^2$, and the claim is proved.
	
	Now we can apply Lemma~\ref{lm:vf_vanish} to the gradient steady GKRS $(M,g,I,J)$ and vector fields $Y_I$ and $Y_J$, concluding that $Y_I=Y_J$. It gives us 3 vector fields $X_I,X_J,Y_I$ which preserve $I$, $J$ and $g$. It remains to prove that they are linearly independent so that
	\[
	\mathrm{span}_\R(X_I,X_J,Y_I)=\mf k.
	\]
	Assume on the contrary that $Y_I$ lies in the span of $X_I,X_J$.  Using part (7) of Proposition~\ref{pr:XI_XJ_properties} and the fact that $\sigma$ is $I$- and $J$-anti-invariant, we observe that if $Y_I$ were in the span of $X_I$ and $X_J$, then
	\[
	dh_{Y_I}(IX_I) = \sigma^{-1}(IX_I,Y_I)=0,\qquad dh_{Y_I}(JX_J) = \sigma^{-1}(IX_J,Y_I)=0.
	\]
	Together these imply, using that $I(X_I + X_J)$ is a multiple of $\Re(z_1\frac{\del}{\del z_1})$,
	\[
	0=dh_{Y_I}(I(X_I+X_J))= I(X_I+X_J) \cdot \left(\log|\alpha|\log|z_2|-\log|\beta|\log|z_1|\right)= \mathrm{const} \cdot \log|\beta|,
	\]
	where the constant is nonzero, since $X_I+X_J$ is a nonzero vector.  This is a contradiction, and thus $\mathrm{span}_{\R}(X_I,X_J,Y_I) = \mf k$, and we conclude that the GKRS $(M,g,I,J)$ must be invariant under the maximal compact subgroup $K\subset \mathrm{Aut}(M,I_{\alpha\beta},\pmb\pi)$.
	By~\cite[Theorem~\,1.1]{st-us-19}, the metric $g$ is uniquely defined up to a constant multiple and the action of $\mathrm{Aut}(M,I_{\alpha\beta},\pmb\pi)$.
	
	Let us fix such metric $g$. It remains to identify the Poisson tensor $\sigma$. We know that $\sigma_I=\sigma-\sqrt{-1}I\sigma$ is a real multiple of $\pmb\pi$, so that
	\[
	\sigma = \lambda\Re\left(z_1z_2\frac{\del}{\del z_1}\wedge \frac{\del }{\del z_2}\right),\quad \lambda\in \R.
	\]
	It remains to determine possible values of the constant $\lambda$. Since on a GK manifold we have
	\[
	|\sigma|^2_g=1-p^2,
	\]
	$g$ is already determined (up to scaling), and $p$ takes values between $-1$ on $Z_-$ and $+1$ on $Z_+$, we have that $\lambda$ is uniquely determined up to a sign by the condition that
	\[
	\sup_{x\in M} |\sigma_x|^2_g=1. 
	\]
	By the claim in the beginning of the proof, for each such choice of $\lambda$ there is at most one GKRS with the prescribed $g,I,\sigma$. Our convention above on $J$ assumes that $I+J=0$ on $\{z_1=0\}$ and $I-J=0$ on $\{z_2=0\}$, which prescribes uniquely the angle function $p$. Thus the sign of $\lambda$ is uniquely determined by the identity~\eqref{eq:p_sigma}. For the right choice of the sign of $\lambda$ there is indeed a genuine GKRS $(g,I,J,\sigma)$ as constructed in~\cite[\S 3.2]{st-us-19}. 
	The change of the complex structure $J\mapsto -J$ would result into the change of the signs of the angle function $p$ and of $\lambda$. 
	
	The uniqueness of even GKRS on a diagonal Hopf surface $(M,I_{\alpha\beta})$ is proved.
	
\end{proof}

\begin{cor} \label{c:classification2}
	Let $(M^4,g,I,J)$ be a compact gradient steady GKRS. Then precisely one of the following holds
	\begin{enumerate}
		\item $(M,I)$ is K\"ahler, $g$ is a Calabi-Yau metric, i.e., $\mathrm{Rc}_g=0$, and $I=\pm J$.
		\item $(M^4,g,I,J)$ is an odd-type GK structure, with $(M^4,I)$ biholomorphic to a quotient of a diagonal Hopf surface $(\til M,I_{\alpha\beta})$ by $\Gamma_{k,\ell}\simeq \Z_{\ell}$.
		Up to the action of $\mathrm{Aut}(M,I)$ and scaling, the metric $g$ and the complex structure $J$ are given by $g^s$ and $\pm J_{\mathrm{odd}}^s$ of Theorem~\ref{thm:hopf_soliton_existence}.
		\item $(M,g,I,J)$ is an even-type GK structure, with $(M,I)$ biholomorphic to a quotient of a diagonal Hopf surface $(\til M,I_{\alpha\beta})$ by $\Z_{\ell}$ acting via multiplication by $\Gamma_{k,\ell}\simeq \Z_{\ell}$.
		Up to the action of $\mathrm{Aut}(M,I)$ and scaling, the metric $g$ and the complex structure $J$ are given by $g^s$ and $\pm J_{\mathrm{even}}^s$ of Theorem~\ref{thm:hopf_soliton_existence}.
	\end{enumerate}
\end{cor}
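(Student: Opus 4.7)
The plan is to obtain the trichotomy by first isolating the Kähler case and then, in the remaining non-Kähler case, lifting to the universal (primary) cover, where the earlier structural and uniqueness theorems apply.

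First I would dispose of case (1). Suppose $(M^4,g,I,J)$ is a compact gradient steady GKRS with $(M,I)$ Kähler. Then by the main result of \cite{st-19-soliton} (quoted in the introduction), the only possibilities on a compact complex surface are a KRS or a steady GKRS on a Hopf surface; since $(M,I)$ is Kähler we are in the former case, and moreover $J = \pm I$, $H=0$. A compact steady KRS has $f$ constant, so $g$ is Ricci-flat and Kähler, i.e., Calabi-Yau. This gives alternative (1). For the remaining cases, invoking again \cite{st-19-soliton}, $(M^4, I)$ must be a (primary or secondary) Hopf surface and the GKRS is steady. Proposition \ref{pr:symp_soliton} rules out symplectic-type structures; in particular $(M, g, I, J)$ is log-nondegenerate.

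Next I would split according to the parity of $(g,I,J)$. In the odd case, by \cite{ap-gu-07} the Hopf surface $(M, I)$ is biholomorphic to a quotient of a diagonal Hopf surface $(\til M, I_{\alpha\beta})$ by $\Gamma_{k,\ell}\simeq \Z_\ell$ as in \eqref{Gamma}. Lifting the GKRS structure to $\til M$, $\Gamma_{k,\ell}$ acts by biholomorphisms preserving $g$ and $J$, and one applies the uniqueness statement of \cite[\S 5]{st-us-19} for odd-type GKRS on the diagonal cover to conclude that $g = c \cdot g^s$ and $J = \pm J^s_{\mathrm{odd}}$ up to an element of $\mathrm{Aut}(\til M, I_{\alpha\beta})$; because $\Gamma_{k,\ell}\subset K$ lies in the maximal compact stabilizer of the Poisson tensor and $g^s, J^s_{\mathrm{odd}}$ are $K$-invariant by Theorem \ref{thm:hopf_soliton_existence}, everything descends to $M$, giving alternative (2).

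The even case is exactly analogous, using Theorem \ref{thm:hopf_existence_even} in place of \cite{ap-gu-07} to identify $(M,I)$ as a $\Gamma_{k,\ell}$-quotient of a diagonal Hopf surface, and then Theorem \ref{thm:hopf_soliton_uniqueness} in place of \cite{st-us-19} for the uniqueness of the lifted structure up to $\mathrm{Aut}(\til M, I_{\alpha\beta})$ and scaling, yielding $g = c\cdot g^s$ and $J = \pm J^s_{\mathrm{even}}$. Again $K$-invariance of $(g^s, J^s_{\mathrm{even}})$ ensures the equivalence descends from $\til M$ to $M$, yielding alternative (3).

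The only nontrivial point in executing this plan is the descent step in cases (2) and (3): an automorphism $\gamma \in \mathrm{Aut}(\til M, I_{\alpha\beta})$ realizing the equivalence on the universal cover need not \emph{a priori} commute with $\Gamma_{k,\ell}$, so it need not descend directly to an automorphism of $M$. The main obstacle is therefore to choose $\gamma$ carefully, using that $\Gamma_{k,\ell}$ sits inside the compact torus $K \subset \mathrm{Aut}(\til M, I_{\alpha\beta}, \pmb{\pi})$ and that, after applying Theorem \ref{thm:hopf_soliton_uniqueness} (or its odd analogue), one may in fact take $\gamma$ to normalize $\Gamma_{k,\ell}$; this follows from the fact that both $g$ and $g^s$ (resp.\ the corresponding $J$'s) are $K$-invariant and hence $\gamma$ may be chosen in the normalizer of $\Gamma_{k,\ell}$ in $\mathrm{Aut}(\til M, I_{\alpha\beta})$, which projects onto $\mathrm{Aut}(M, I)$. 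The remaining verification — that the two $K$-symmetries cited above are genuinely compatible with the quotient — is then routine.
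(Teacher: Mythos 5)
Your overall route coincides with the paper's: reduce via \cite{st-19-soliton} to either the K\"ahler/Calabi--Yau case or a Hopf surface, identify that Hopf surface as a $\Gamma_{k,\ell}$-quotient of a diagonal one via \cite{ap-gu-07} (odd type) or Theorem~\ref{thm:hopf_existence_even} (even type), lift to the primary cover, and apply the uniqueness results there. You also correctly isolate the one nontrivial point, namely that the automorphism $\phi$ of $(\til M, I_{\alpha\beta})$ produced by Theorem~\ref{thm:hopf_soliton_uniqueness} (or its odd analogue) need not normalize $\Gamma_{k,\ell}$ and hence need not descend to $M$.

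Your resolution of that point is where the gap lies. The claim that ``$\gamma$ may be chosen in the normalizer of $\Gamma_{k,\ell}$ because both $g$ and $g^s$ are $K$-invariant'' does not follow: invariance of both metrics only gives $\phi^{-1}\,\mathrm{Isom}(g^s)\,\phi = \mathrm{Isom}(g)$, so $\phi^{-1}\Gamma_{k,\ell}\,\phi$ is \emph{some} cyclic subgroup of $\mathrm{Isom}(g)\cap \mathrm{Aut}(\til M, I_{\alpha\beta})$ conjugate to $\Gamma_{k,\ell}$, not $\Gamma_{k,\ell}$ itself. Moreover the two tori ``$K$'' are a priori different subgroups of $\mathrm{Aut}(\til M,I_{\alpha\beta})$, since each is the maximal compact of the stabilizer of the respective Poisson tensor, and the two Poisson tensors may vanish on different anticanonical divisors of $\til M$; so ``$K$-invariance of both'' is not a single hypothesis one can exploit directly. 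The paper closes this gap in two steps. First, Lemma~\ref{lm:aut_secondary} shows, by a case analysis on $(\alpha,\beta)$ using Proposition~\ref{prop:hopf_aut}, that $\mathrm{Aut}(M,I)$ of the \emph{secondary} surface acts transitively on disconnected canonical divisors --- equivalently, that $N_{\mathrm{Aut}(\til M,I_{\alpha\beta})}(\Gamma_{k,\ell})$ acts transitively on the $\Gamma_{k,\ell}$-invariant such divisors; this aligns the zero loci of the two Poisson tensors by an automorphism that automatically descends. Second, once the zero loci agree, Proposition~\ref{prop:hopf_Q_sigma} forces the relating automorphism $\phi$ to lie in $(\C^*)^2/\la A\ra$, which is abelian and contains $\Gamma_{k,\ell}$, so $\phi$ commutes with the deck group and descends. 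Neither step is ``routine'': the first in particular hinges on the explicit description of $\mathrm{Aut}(\til M, I_{\alpha\beta})$ and of the $\Gamma_{k,\ell}$-invariant divisors in each of the three cases for $(\alpha,\beta)$, which your proposal does not supply.
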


\begin{proof}
	Assume that a compact complex surface $(M,I)$ is a part of a gradient steady GKRS $(M,g,I,J)$. It follows from~\cite[Theorem\,1.1]{st-19-soliton} (see also~\cite{st-us-19}) that either $(M,g,I)$ is K\"ahler, Calabi-Yau, or $(M,I)$ is biholomorphic to a Hopf surface. 
	In the latter case, by the results~\cite{ap-gu-07} and Theorem~\ref{thm:hopf_existence_even} we know that regardless of whether $(M,g,I,J)$ is even or odd, $(M,I)$ must be biholomorphic to the quotient of a primary Hopf surface $(\til M,I_{\alpha\beta})=(\C^2\backslash\{\mathbf 0\})/\la A\ra$ by a cyclic group $\Gamma_{k,\ell}\simeq \Z_\ell$ acting via multiplication by primitive $\ell^{\mathrm{th}}$ roots of 1.  Now, assuming that $(M,I)$ is such Hopf surface, we prove the uniqueness of odd/even gradient steady solitons up to the action of $\mathrm{Aut}(M,I)$. Below we consider the case of even GK structures, odd being analogous.  Assume that $(M,I)$ is a $\Gamma_{k,\ell}\simeq \Z_\ell$-quotient of a diagonal Hopf surface $(\til M,I_{\alpha\beta})$ (see Definition~\ref{def:hopf}). We will need the following lemma extending Proposition~\ref{prop:hopf_Q_sigma}.
	\begin{lemma}\label{lm:aut_secondary}
		Let $(M,I)=(\til M,I_{\alpha\beta})/\Gamma_{k,\ell}$ be a cyclic quotient of a diagonal Hopf surface. Then $\mathrm{Aut}(M,I)$ acts transitively on the disconnected canonical divisors on $(M,I)$.
	\end{lemma}
	\begin{proof}
		Let $\pi\colon (\til M,I_{\alpha\beta})\to (M,I)$ be the natural projection. Denote by $D$ a disconnected canonical divisor on $(M,I)$. Then $E=\pi^{-1}D$ is a disconnected $\Gamma_{k,\ell}$-invariant canonical divisor on $(\til M,I_{\alpha\beta})$. An automorphism $\gamma\in\mathrm{Aut}(\til M,I_{\alpha\beta})$ descends to an automorphism of the $\Gamma_{k,\ell}$-quotient $(M,I)$ if and only if $\gamma$ belongs to the normalizer of $\Gamma_{k,\ell}$ in $\mathrm{Aut}(\til M,I_{\alpha\beta})$:
		\[
		\gamma\in N_{\mathrm{Aut}(\til M,I_{\alpha\beta})}(\Gamma_{k,\ell}).
		\]
		Thus the statement of the lemma would follow from the following:
		\medskip
		
		\noindent\textbf{Claim.} $N_{\mathrm{Aut}(\til M,I_{\alpha\beta})}(\Gamma_{k,\ell})$ acts transitively on the set of $\Gamma_{k,l}$-invariant disconnected canonical divisors on $(\til M,I_{\alpha\beta})$.
		\medskip
		
		To prove this claim we have to go through various cases on $(\alpha,\beta)$ using Proposition~\ref{prop:hopf_aut}:
		
		\begin{enumerate}
			\item If $\alpha=\beta$, then $\mathrm{Aut}(\til M,I_{\alpha\beta})=\mathrm{GL}_2(\C)/\la A\ra$, where $A=\left(\begin{matrix}
			\alpha & 0 \\ 0 & \beta
			\end{matrix}\right)$, and any disconnected canonical divisor is given by the zero set of a nondegenerate quadratic form $q(z_1,z_2)$.  If $k\neq 1$ in $\Gamma_{k,\ell}$, then the only $\Gamma_{k,\ell}$-invariant canonical divisor on $(\til M,I_{\alpha\beta})$ is the standard one $(\{z_1=0\}\cup \{z_2=0\})/\la A\ra$ and the claim is tautologically true.  If $k=1$, then $\Gamma_{k,\ell}$ lies in the center of $\mathrm{Aut}(\til M,I_{\alpha\beta})$ so that the normalizer of $\Gamma_{k,\ell}$ is the entire automorphism group which acts transitively on the set of all disconnected canonical divisors by Proposition \ref{prop:hopf_Q_sigma}.
			
			\item If $\alpha=\beta^q$, then $\mathrm{Aut}(\til M,I_{\alpha\beta})=G/\la A\ra$, where group $G\simeq (\C^*)^2\rtimes\C$ acts on $\C^2\backslash\{\mathbf{0}\}$ as
			\[
			(z_1,z_2)\mapsto (az_1+bz_2^q,dz_2),\quad a,d\in\C,^*\ b\in\C,
			\]
			If $k\neq q$, then the only disconnected canonical divisor is the standard one, and the claim is again tautologically true.  If $k\neq q$, then $\Gamma_{k,\ell}$ lies in the center of $\mathrm{Aut}(\til M,I_{\alpha\beta})$. As before it means that the normalizer of $\Gamma_{k,\ell}$ coincides with the entire automorphism group which acts transitively on the set of all disconnected canonical divisors by Proposition \ref{prop:hopf_Q_sigma}.
			
		\item If $\alpha\neq \beta^q$ and $\beta\neq \alpha^q$ for any $q$, then there is only one disconnected canonical divisor, and the claim again trivially holds.
		\end{enumerate}
		With the claim proved we conclude the proof of Lemma~\ref{lm:aut_secondary}.
	\end{proof}
	
	We now get back to the proof of the corollary. Let $(M,g_1,I,J_1)$ and $(M,g_2,I,J_2)$ be two gradient steady even GKRS on a secondary Hopf surface $(M,I)=(\til M,I_{\alpha\beta})/\Gamma_{k,\ell}$. Each GK structure defines a Poisson tensor
	\[
	\sigma_1=\frac{1}{2}[I,J_1]g_1^{-1}\quad \sigma_2=\frac{1}{2}[I,J_2]g_2^{-1}.
	\]
	By Lemma~\ref{lm:aut_secondary}, we can find $\gamma\in\mathrm{Aut}(M,I)$ such that $\sigma_1$ and $\gamma^*\sigma_2$ have a common zero locus.
	
	The GK structures $(g_1,I,J_1)$ and $(\gamma^*g_2,I,\gamma^*J_2)$ lift to GK solitons on the primary Hopf surface $(\til M,I_{\alpha\beta})$, which we denote by the same symbols. By Theorem~\ref{thm:hopf_soliton_uniqueness}, there exists $\phi\in \mathrm{Aut}(\til M,I_{\alpha\beta})$ such that
	\[
	\phi^*(\til M,g_1,I_{\alpha\beta},J_1)=(\til M,\gamma^*g_1,I_{\alpha\beta},\gamma^*J_1)
	\]
	On the other hand both GK structures $(\til M,g_1,I_{\alpha\beta},J_1)$
	$(\til M,\gamma^*g_1,I_{\alpha\beta},\gamma^*J_1)$ share a common zero locus of the underlying Poisson tensor, thus by Proposition~\ref{prop:hopf_Q_sigma}, $\phi\in (\C^*)^2/\la A\ra$. Since $\Gamma_{k,\ell}$ is also a subgroup of $(\C^*)^2/\la A\ra$ and this group is abelian, $\phi$ descends to an automorphism of $(M,I)=(\til M,I_{\alpha\beta})/\Gamma_{k\ell}$.
\end{proof}

\section{The space of generalized K\"ahler structures} \label{s:SOGKS}

In K\"ahler geometry, the space of K\"ahler metrics in a given K\"ahler class forms on open cone.  Furthermore, the space of K\"ahler classes is characterized using the result of Demailly-Paun \cite{DemPaun}, thus the space of K\"ahler metrics on a given K\"ahler manifold is fairly well-understood, allowing for the definition of functionals characterizing interesting canonical representatives.  As we have seen above, in the setting of generalized K\"ahler geometry, the questions of the global structure of the space of GK metrics, the underlying Poisson geometry, the definition of interesting functionals, and the existence of canonical metrics, are all linked.  In this section we explore these questions in a more general way.

We first note that given a generalized K\"ahler manifold $(M,g,I,J)$, we can `forget' one of the complex structures and obtain two underlying Hermitian manifold $(M,g,I)$ and $(M,g,J)$, which are automatically \emph{pluriclosed}, i.e.,  $dd^c_I\omega_I=dd^c_J\omega_J=0$. A fundamental question is whether this procedure can be inverted (see also~\cite[Question\,1]{ap-gu-07}):

\begin{question}\label{q:complex_to_gk}
Given a compact complex manifold $(M,I)$ admitting some pluriclosed metric $g_0$, when is it possible to upgrade $(M,I)$ to a generalized K\"ahler structure $(M,g,I,J)$ with a possibly different metric?
\end{question}

Clearly, the answer to the question is positive if $(M,I)$ admits a K\"ahler metric $g$~--- in this case one can take $J=I$ or $J=-I$. The situation is much more subtle if $(M,I)$ does not admit any K\"ahler metric. This question happens to be particularly fruitful if $\dim_\C M=2$, since any compact complex surface $(M,I)$ admits a unique up to a constant factor pluriclosed metric in every conformal class. In this setting, generalized K\"ahler structures appeared among bihermitian structures, and their existence and classification were actively studied in the connection to the real conformal 4-dimensional geometry, cf.\,\cite{ap-ga-gr-99,ap-gu-07,ap-ba-dl-17, fu-po-10,fu-po-14, fu-po-19, sa-91, st-us-19}.  

Given that any GK manifold $(M,g,I,J)$ has an underlying holomorphic Poisson tensor $\sigma_I$, it is natural to formulate a refinement of Question~\ref{q:complex_to_gk}.
\begin{question}\label{q:poisson_to_gk}
	Let $(M,I,\pmb\pi)$ be a compact complex manifold with a holomorphic Poisson tensor $\pmb\pi\in\Gamma(\Lambda^{2}(T^{1,0}M))$. Assume that $(M,I)$ admits a pluriclosed metric. Is it possible to upgrade $(M,I)$ to a generalized K\"ahler structure $(M,g,I,J)$ so that the Poisson tensor underlying $(M,g,I,J)$ equals the given one: $\sigma_I=\pmb\pi$?
\end{question}

There is an abundance of examples, where the answer to the above question is positive. For instance, the deformation theory developed by Goto~\cite{go-10,go-12}, implies that given a compact K\"ahler surface $(M,I)$, and a holomorphic Poisson tensor $\pmb\pi$, there exist GK structures $(M,g,I,J)$ with the underlying Poisson tensor $\sigma_I=\pmb\pi$. At the same time,  there are some rather subtle obstructions in the non-K\"ahler case. For example, a non-diagonal primary Hopf surface (see Definition~\ref{def:hopf}) has a unique and connected anti-canonical divisor, but it does not  admit GK structures, as a consequence of \cite[Th.~3]{ap-gu-07} and  \cite[Prop. 4]{ap-ga-gr-99}.  Furthermore,  we showed in Theorem \ref{thm:hopf_poisson_gk} that not \emph{all} Poisson tensors in the  pencil determined by an anti-canonical divisor $D$ consisting of two elliptic curves can actually be realized by a GK structure on a type $1$ Hopf surface.

The above questions are mostly aimed at understanding the possible space of generalized K\"ahler classes, as defined through the flow construction in \cite{gi-st-20}.  Going further, given such a fixed generalized K\"ahler class, it is natural to ask for its topology:
\begin{question}\label{q:topologyM}
	Let $(M,I,J)$ be a compact generalized K\"ahler manifold.  What is the topology of the associated generalized K\"ahler class $\mathcal M$?
\end{question}
A natural approach to this question is through the use of \emph{generalized K\"ahler-Ricci flow} (GKRF) \cite{st-ti-12}, a special case of pluriclosed flow \cite{st-ti-10}.  GKRF is a natural extension of K\"ahler-Ricci flow to the setting of GK geometry, which when $c_1(M, I) = 0$ will evolve via the flow construction (albeit with a singular Hamiltonian potential function).  For instance, global existence and convergence results for GKRF (\cite{ap-st-17, ga-jo-st-21, st-bi, st-16, st-17}) yield the topology of generalized K\"ahler structures in certain settings (see also Remark \ref{r:flowrmk}).

\end{document}